		\def\XXint#1#2#3{{\setbox0=\hbox{$#1{#2#3}{\int}$}	
\vcenter{\hbox{$#2#3$}}\kern-.5\wd0}}
\def\red{\color{red}}
\def\rr{{\mathbb R}}
\def\rn{{\mathbb{R}^n}}
\def\zz{{\mathbb Z}}
\def\nn{{\mathbb N}}
\def\cm{{\mathcal M}}
\def\cp{{\mathcal P}}
\def\cs{{\mathcal S}}
\def\lf{\left}
\def\r{\right}
\def\noz{\nonumber}
\def\c{\mathop\mathrm{c}}
\def\loc{{\mathop\mathrm{\,loc\,}}}
\def\supp{\mathop\mathrm{\,supp\,}}
\def\XXint#1#2#3{{\setbox0=\hbox{$#1{#2#3}{\int}$ }
\vcenter{\hbox{$#2#3$ }}\kern-.6\wd0}}
\DeclareMathOperator{\esssup}{ess\,sup}
\def\({\left(}
\def \){ \right)}
 \def\supp{\operatorname{supp}}
\newtheorem{theorem}{Theorem}[section]
\newtheorem{lemma}[theorem]{Lemma}
\newtheorem{corollary}[theorem]{Corollary}
\theoremstyle{definition}
\newtheorem{remark}[theorem]{Remark}
\newtheorem{definition}[theorem]{Definition}
\renewcommand{\appendix}{\par
   \setcounter{section}{0}%
   \setcounter{subsection}{0}%
   \setcounter{subsubsection}{0}%
   \gdef\thesection{\@Alph\c@section}%
   \gdef\thesubsection{\@Alph\c@section.\@arabic\c@subsection}%
   \gdef\theHsection{\@Alph\c@section.}%
   \gdef\theHsubsection{\@Alph\c@section.\@arabic\c@subsection}%
   \csname appendixmore\endcsname
 }
\numberwithin{equation}{section}
\begin{document}

\title{\bf\Large Real-Variable Characterizations and Their
Applications
of Matrix-Weighted Triebel--Lizorkin Spaces
\footnotetext{\hspace{-0.35cm}  2020 \emph{Mathematics Subject
Classification}. Primary 46E35; Secondary 42B25, 42B15, 42B35.																																									 \endgraf
\emph{Key words and phrases}. matrix weight, Triebel--Lizorkin space,
Peetre maximal function, Littlewood--Paley function, Fourier multiplier.
\endgraf This project is partially supported
by the National Key Research and Development Program of China
(Grant No. 2020YFA0712900)
and the National Natural Science Foundation of China
(Grant Nos. 11971058 and  12071197).}}
\author{Qi Wang, Dachun Yang\footnote{Corresponding author,
E-mail: \texttt{dcyang@bnu.edu.cn}/{\red July 18, 2022}/Final version.}
\ and Yangyang Zhang}
\date{}
\maketitle

\vspace{-0.7cm}

\begin{center}
\begin{minipage}{13cm}
{\small {\bf Abstract}\quad
Let $\alpha\in\mathbb R$,  $q\in(0,\infty]$,
$p\in(0,\infty)$, and $W$ be an $A_p(\mathbb{R}^n,\mathbb{C}^m)$-matrix weight.
In this article, the authors characterize the matrix-weighted Triebel--Lizorkin
space $\dot{F}_{p}^{\alpha,q}(W)$ via the Peetre maximal function, the Lusin area
function, and the Littlewood--Paley $g_{\lambda}^{*}$-function.
As applications, the authors establish the boundedness of Fourier multipliers on
matrix-weighted Triebel--Lizorkin spaces under the
generalized H\"ormander condition.
The main novelty of these results exists in that their proofs
need to fully use both the doubling property of
matrix weights and the reducing operator
associated to matrix weights, which are essentially different
from those proofs of the corresponding cases of
classical Triebel--Lizorkin spaces that strongly depend
on the Fefferman--Stein vector-valued maximal inequality on Lebesgue spaces.
}
\end{minipage}
\end{center}

\vspace{0.2cm}



\section{Introduction\label{s1}}

Lizorkin \cite{l72,l74} and Triebel \cite{T73} independently
started to investigate Triebel--Lizorkin spaces $F_{p}^{\alpha,q}(\rn)$
from 1970s. Furthermore, we mention the contributions \cite{P3,P2,P4} of Peetre
who extended the range of the admissible parameters $p$ and $q$ to values
less than one. We refer the reader to \cite{T1,T2,T3,Sa2,S3} for more
studies of these function spaces and their history.

On the other hand, the real-variable theory of both function spaces
and the boundedness of operators related to matrix weights on $\rn$ has
received increasing interest in recent years. In 1997, to solve some
significative problems related to the multivariate random stationary
process and the Toeplitz operators (see, for instance, \cite{tv}), Treil
and Volberg \cite{t} introduced the Muckenhoupt $A_2(\rn,\mathbb{C}^m)$-matrix
weights and  generalized the Hunt--Muckenhoupt--Wheeden theorem to the
vector-valued case, while Nazarov and  Treil \cite{nt} introduced Muckenhoupt
$A_p(\rn,\mathbb{C}^m)$-matrix weights for any $p\in(1,\infty)$ (see also Definition \ref{Ap} below for its
definition), and obtained the boundedness of the Hilbert
transform on the matrix-weighted Lebesgue space $L^p(W)$, which was
proved again by  Volberg \cite{v} via a new
approach involving the classical  Littlewood--Paley theory.
In 2016, Cruz-Uribe et al. \cite{cmr16}
applied the theory of $ A_p $ matrix weights on $ \mathbb{R}^{n} $ to study
degenerate Sobolev spaces.
See also, for instance, \cite{ci22, cim18,cimpr,dly21} for more studies
on matrix-weighted function spaces and their applications.
Later, Frazier and Roudenko \cite{FR} introduced the matrix-weighted homogeneous
Triebel--Lizorkin space $\dot{F}^{\alpha,q}_{p}(W)$ via the discrete
Littlewood--Paley $g$-function with $\alpha\in\rr$, $p\in(0,\infty)$,
and $q\in(0,\infty]$ (see also Definition \ref{FBW}
below for its definition). For any given $p\in(1,\infty)$, Frazier and Roudenko \cite{FR}
proved that $L^p(W)=\dot{F}^{0,2}_{p}(W)$ and, for any $k\in\nn$,
$F^{k,2}_{p}(W)$ coincides with the matrix-weighted Sobolev space $L^p_{k}(W)$.
Frazier and Roudenko \cite{FR}
also showed that a vector-valued function $\vec{f}$ belongs to
$\dot{F}^{\alpha,q}_{p}(W)$
if and only if its $\varphi-$transform coefficients belong to the
sequence space
$\dot{f}^{\alpha,q}_{p}(W)$. As an application of the above results,
Frazier and Roudenko
\cite{FR} obtained the boundedness of  Calder\'on--Zygmund operators on
$\dot{F}^{\alpha,q}_{p}(W)$. However,
\emph{no other real-variable characterizations} of these
Triebel--Lizorkin spaces are known so far.
The main purpose of this article is try to fill this gap.

Let $\alpha\in\rr$, $p\in(0,\infty)$, $q\in(0,\infty]$, and $W$ be an
$A_p(\rn,\mathbb{C}^m)$-matrix weight. In this article, we
first consider
other real-variable
characterizations of $\dot{F}^{\alpha,q}_{p}(W)$,
including its characterizations via the Peetre maximal
function, the Lusin
area function, and the Littlewood--Paley
$g_\lambda^{*}-$function, respectively, in Theorems \ref{pmf}, \ref{lusin},
and \ref{glambda} below.
We should point out that the main strategy used in \cite{U,P}
to establish these real-variable characterizations
of classical Triebel--Lizorkin spaces is based on a technique of the
application of
the Fefferman--Stein vector-valued maximal inequality.
However, since
the matrix-weighted Fefferman--Stein vector-valued maximal inequality is still
unknown so far,
it follows that the approach used in \cite{U,P} is no longer
feasible for
matrix-weighted Triebel--Lizorkin spaces. To overcome
these obstacles,
we borrow some ideas from \cite{FR} and
introduce both the Peetre maximal function and the Littlewood--Paley function
quasi-norms in terms of  reducing operators associated to
$W$ [see
\eqref{pq} and \eqref{gq} below]. Then the problem can
be reduced to study the equivalence between the quasi-norms
of Triebel--Lizorkin spaces in terms of reducing operators of
$W$ in Definition \ref{AQ} below and the corresponding
Peetre maximal function or the corresponding Littlewood--Paley $g_{\lambda}^{*}$-function
quasi-norm, respectively, in \eqref{pq} and \eqref{gq} below,
which allows us to use the Fefferman--Stein vector-valued
maximal inequality
in $L^p(\rn)$ to solve the problem.
As an application of the
Littlewood--Paley characterization of $\dot{F}^{\alpha,q}_{p}(W)$,
we obtain, in Theorem \ref{fourier multiplier} below, the boundedness
of  Fourier multipliers on $\dot{F}^{\alpha,q}_{p}(W)$ under the
assumption of the H\"ormander condition [see \eqref{Hormander} below].

To be precise, the remainder of this article is
organized as follows.

In Section \ref{s2}, we first recall some concepts
concerning the matrix
weight $W$, the $A_p(\rn,\mathbb{C}^m)$-matrix weight
condition, and the reducing operator of $W$. Then we recall
some known properties and also give some new properties of both
$A_p(\rn,\mathbb{C}^m)$-matrix weights and reducing operators of
matrix weights, which play a key role in the
proof of the whole article.

In Section \ref{s4}, we establish some real-variable
characterizations of $\dot{F}^{\alpha,q}_{p}(W)$. We
first characterize
the matrix-weighted
 Triebel--Lizorkin
space $\dot{F}^{\alpha,q}_{p}(W)$ for any $\alpha\in\rr$, $p\in(0,\infty)$,
$q\in(0,\infty]$, and $W\in A_p(\rn,\mathbb{C}^m)$ in terms of the  Peetre maximal function
(see Theorem \ref{pmf} below).
By introducing the Peetre maximal function with the
reducing operator relating
to the matrix weight, we obtain both the Lusin area function and
the Littlewood--Paley $g_\lambda^*$-function
characterizations of
matrix-weighted Triebel--Lizorkin spaces (see Theorems
\ref{lusin} and
\ref{glambda} below).

In Section \ref{s6}, we prove the boundedness of Fourier multipliers on
$\dot{F}^{\alpha,q}_{p}(W)$ (see Theorem \ref{fourier multiplier} below) under
the assumption of the H\"ormander condition
[see \eqref{Hormander} below for
its definition], which is an application of
the Littlewood--Paley characterization
of $\dot{F}^{\alpha,q}_{p}(W)$ with $\alpha\in\rr$,
$p\in(0,\infty)$,
$q\in(0,\infty]$, and $W$ being an $A_p(\mathbb{R}^n,
\mathbb{C}^m)$-matrix weight.

Finally, we make some conventions on notation.
We use the symbol $f\lesssim g$ to denote that there exists a positive
constant $C$ such that $f\leq C g$. The symbol $f\sim
g$ is used as an
abbreviation of $f\lesssim g\lesssim f$. If $f\leq
Cg$ and $g=h$ or $g\leq h$,
we then write $f\lesssim g\sim h$ or $f\lesssim
g\lesssim h$, rather than
$f\lesssim g=h$ or $f\lesssim g\leq h$. Let
$\mathbb{N}:=\{1,\,2,\,\dots\}$,
$\zz_+:=\mathbb{N}\cup\{0\}$, and $\zz_+^n:=(\zz_+)^n$.
For any multi-index
$\gamma:=(\gamma_1,\,\dots,\,\gamma_n)\in\zz_+^n$ and any
$x:=(x_1,\ldots,x_n)\in\rn$,
let $|\gamma|:=\gamma_1+\cdots+\gamma_n$, $x^{\gamma}
:=x_1^{\gamma_1}\cdots x_n^{\gamma_n}$,
and $\partial^\gamma:=(\frac{\partial}{\partial x_1})^{\gamma_1}
\cdots(\frac{\partial}{\partial x_n})^{\gamma_n}$. For
any index $p\in[1,\infty]$,
we use $p'$ to denote its \emph{conjugate index}, namely,
$\frac{1}{p}+\frac{1}{p'}=1$.
In addition, for any measurable set $F\subset\rn$,
we denote by $\mathbf{1}_F$ its
\emph{characteristic function}. We use the notation
$\langle f, g\rangle$
to denote a
pairing which is linear in $f$ and $g$; when this pairing is between a
distribution $f$
and a test function $g$, then $\langle f, g\rangle=f(g)$. We also use
the notation
$(\vec{x},\vec{y})$ to denote the inner product of
$\vec{x},\vec{y}\in\mathbb{C}^m$.
For any $f\in L^1_{\loc}(\rn)$ and any measurable set $E\subset\rn$, let
$$\fint_{E} f(x) \,dx:=\frac{1}{|E|}\int_E f(x)\,dx.$$
For any $s\in\rr$,
we use the
symbol $\lfloor s\rfloor$ to denote the \emph{largest integer
not greater than $s$}.
For any measurable function $g$ and any $x\in\rn$, let
$\widetilde{g}(x):=g(-x)$.
For any $x\in\rn$ and $r\in(0,\infty)$, let
$B(x,r):=\{y\in\rn:\ \ |x-y|<r\}$ be the
ball with center $x$ and radius $r$. Furthermore,
for any $a\in(0,\infty)$ and
any ball $B:=B(x_B,r_B)$ in $\rn$ with $x_B\in\rn$
and $r_B\in(0,\infty)$,
let $a B:=B(x_B,a r_B)$. We also use $\mathbf{0}$ to denote
the origin of $\rn$.

Let $\cs(\rn)$ be the \emph{space
of all Schwartz functions} on $\rn$, equipped with the classical
topology determined by a well-known countable family of norms, and $\cs'(\rn)$ its
\emph{topological dual space} [namely, the set of
all continuous linear
functionals on $\cs(\rn)$], equipped with the weak $*$-topology.
Following Triebel,
we let
$$\cs_{\infty}(\rn):=\left\{\varphi\in\cs(\rn): \int_\rn
\varphi(x)x^\gamma dx=0\
\text{for all multi-indices}\ \gamma\in\zz_+^n \right\}$$
and consider $\cs_{\infty}(\rn)$ as a subspace of $\cs(\rn)$,
including its topology. Use $\cs'_{\infty}(\rn)$ to denote the
\emph{topological dual space} of $\cs_{\infty}(\rn)$, namely,
the set of all continuous linear functionals on $\cs_{\infty}(\rn)$.
We also equip
$\cs'_{\infty}(\rn)$ with the weak $*$-topology.
Let $\cp(\rn)$ be the
\emph{set of all polynomials} on $\rn$. It is well known that
$\cs'_{\infty}(\rn)=\cs'(\rn)/\cp(\rn)$ as topological spaces;
see, for instance,
\cite{G2,S}.

\section{Matrix-Weighted Triebel--Lizorkin Spaces
$\dot{F}^{\alpha,q}_{p}(W)$\label{s2}}

In this section, we present some basic definitions and results of
matrix-weighted Triebel--Lizorkin spaces via two subsections.
In Subsection \ref{s21},
we recall the concepts of the matrix weight,  the $A_p(\rn,
\mathbb{C}^m)$-matrix weight,
and their properties. In Subsection \ref{s22}, we
present both the definition  and also
some basic properties
of  matrix-weighted Triebel--Lizorkin spaces.

\subsection{$A_p(\rn,\mathbb{C}^m)$-Matrix Weights\label{s21}}
In this section,
we  recall the concepts of the matrix weight, the
matrix $A_p(\rn,\mathbb{C}^m)$ condition,
and the reducing operator of $W$.
Furthermore, we present their basic properties.
We begin with recalling the concept of the matrix weight
(see, for instance, \cite{r03,t}).
In what follows, for any $\vec{z}:=(z_1,\dots,z_m)^T\in\mathbb{C}^m$,
let $|\vec{z}|:=(\sum_{j=1}^{m}|z_j|^2)^{1/2}$, where  $T$
denotes the \emph{transpose} of
the row vector.
\begin{definition}\label{lian}
Let $m\in\nn$.
An $m\times m$ complex-valued matrix $A$
is said to be \emph{nonnegative definite} if,
for any
$\vec{z}\in\mathbb{C}^m\setminus\{\mathbf{0}\}$,
$(A\vec{z},\vec{z})\geq0$.
An $m\times m$ complex-valued matrix $A$
is said to be \emph{positive definite} if,
for any $\vec{z}\in\mathbb{C}^m\setminus\{\mathbf{0}\}$,
$(A\vec{z},\vec{z})>0$.
The set of
all  nonnegative definite $m\times m$
complex-valued matrices is denoted by $M_m(\mathbb{C})$.
Furthermore, the \emph{operator
norm of a matrix} $A$ is defined by setting
$$\lf\|A\r\|:=\sup_{\vec{z}\in\mathbb{C}^m\setminus
\{\mathbf{0}\}}\frac{|A\vec{z|}}{|\vec{z}|}.$$
\end{definition}

\begin{definition}\label{bn}
Let $m\in\nn$ and
$W:\ \rn\to M_m(\mathbb{C})$
satisfy that every entry of $W$ is
a measurable function.
The map $W$ is called a
\emph{matrix weight} from $\rn$ to $M_m(\mathbb{C})$
if $W(x)$ is invertible for almost every $x\in\rn$.
\end{definition}

The following definition is a part of \cite[Definition 1.2]{h}.

\begin{definition}\label{lian2}
Let $m\in\nn$ and $A$ be
a positive definite $m\times m$
complex-valued matrix satisfying that
there exists an invertible
$m\times m$ complex-valued matrix $P$
and a diagonal matrix $\,\mathrm{diag}\,(\lambda_1,\ldots,\lambda_m)$,
with $\{\lambda_1,\ldots,\lambda_m\}\subset\mathbb{R}_+$,
such that
$A=P\,\mathrm{diag}\,(\lambda_1,\ldots,\lambda_m)P^{-1}$.
Then, for any $\alpha\in\rr$, let
$$
A^\alpha:=P\,\mathrm{diag}\,(\lambda_1^\alpha,\ldots,
\lambda_m^\alpha)P^{-1}.
$$
\end{definition}

\begin{remark}\label{ilian}
Let $A$ be a positive definite $m\times m$
complex-valued matrix in Definition \ref{lian2}.
By \cite[Theorem 4.1.5]{hj},
we find that the decomposition of $A$ in Definition \ref{lian2}
exists.
Furthermore, from \cite[Problem 1.1]{h}
(see also \cite[p.\,407]{hj}),
we deduce that, for any $\alpha\in(0,\infty)$,
$A^\alpha$ in Definition \ref{lian2} is
independent of the choice of the
invertible $m\times m$ complex-valued matrix $P$
and the diagonal matrix $\mathrm{diag}(\lambda_1,\ldots,\lambda_m)$
with $\{\lambda_1,\ldots,\lambda_m\}\subset\mathbb{C}$.
\end{remark}

In what follows, let $\mathcal{Q}:=\{\text{all cubes } Q\subset\rn\}$,
here and thereafter, a \emph{cube} means its edges parallel to the coordinate axis with a
finite and positive edge length which is not necessary to be open.
The following definition
is just
\cite[Definition 3.2]{R}
and \cite[p. 1226, (1.1)]{FR0}.

\begin{definition}\label{Ap}
Let $m\in\nn$ and $p\in(0,\infty)$.
An \emph{$A_p(\rn,\mathbb{C}^m)$-matrix weight} $W$, denoted by $W\in A_p(\rn,\mathbb{C}^m)$, is a
matrix weight from $\rn$ to $M_m(\mathbb{C})$
satisfying that,
when $p\in(1,\infty)$,
\begin{equation*}
\sup_{Q\in\mathcal{Q}}
\frac{1}{|Q|}\int_{Q}\left[\frac{1}{|Q|}\int_{Q}
\lf\|W^{1/p}(x)W^{-1/p}(y)\r\|^{p'}\,dy\right]^{p/p'}
\,dx<\infty,
\end{equation*}
where $\|\cdot\|$ denotes the operator norm of a matrix
and, when $p\in(0,1]$,
\begin{equation*}
\sup_{Q\in\mathcal{Q}}\mathop{\esssup}\limits_{y\in {Q}}
\frac{1}{|Q|}\int_{Q}\lf\|W^{1/p}(x)W^{-1/p}(y)\r\|^{p}
\,dx<\infty.
\end{equation*}
\end{definition}

\begin{remark}\label{ww}
When $p\in[1,\infty)$ and $m=1$,
the $A_p(\rn,\mathbb{C}^m)$-matrix weight in Definition \ref{Ap}
coincides with the classical $A_p(\rn)$-weight
(see Definition \ref{weight} below for its definition).
\end{remark}

The following result about the matrix weight is just
\cite[Corollary 3.3]{r03}.
\begin{lemma}\label{matrix weight}
Let $W$ be a matrix weight
from $\rn$ to $M_m(\mathbb{C})$,
$p\in(1,\infty)$, and $p':=p/(p-1)$.
Then the following statements are equivalent:
\begin{enumerate}
\item [\rm{(i)}] $W\in A_p(\rn,\mathbb{C}^m)$;
\item [\rm{(ii)}] $W^{-p'/p}\in A_{p'}(\rn,\mathbb{C}^m)$.
\end{enumerate}
\end{lemma}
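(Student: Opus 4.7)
The plan is to prove the equivalence by passing to a symmetric reformulation of the $A_p$ matrix-weight condition in terms of reducing operators, in which the duality between $(W,p)$ and $(W^{-p'/p},p')$ becomes manifest. Since $(p')'=p$ and $(W^{-p'/p})^{-p/p'}=W$, both implications have identical structure, so it suffices to prove $(\mathrm{i})\Rightarrow(\mathrm{ii})$.

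For each cube $Q\in\cq$, I would invoke the standard John-ellipsoid construction to produce positive definite Hermitian $m\times m$ matrices $A_Q$ and $B_Q$ such that, for every $\vec e\in\mathbb{C}^m$,
\begin{equation*}
|A_Q\vec e| \sim \left(\dashint_Q |W^{1/p}(x)\vec e|^p\,dx\right)^{1/p}, \qquad |B_Q\vec e|\sim \left(\dashint_Q |W^{-1/p}(y)\vec e|^{p'}\,dy\right)^{1/p'},
\end{equation*}
with constants depending only on $m$ and $p$. The heart of the proof is then to establish the equivalent characterization
\begin{equation*}
W\in A_p(\rn,\mathbb{C}^m) \iff \sup_{Q\in\cq}\|A_Q B_Q\|<\infty.
\end{equation*}

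To derive this, I would fix $x\in Q$ and rewrite the inner average. Since $W^{\pm1/p}(\cdot)$ is Hermitian at every point, $\|W^{1/p}(x)W^{-1/p}(y)\|=\|W^{-1/p}(y)W^{1/p}(x)\|$. Combining this with an $\varepsilon$-net argument on the unit sphere of $\mathbb{C}^m$ (where the supremum over unit vectors and the average over $Q$ interchange up to a constant depending only on $m$), and then applying the defining property of $B_Q$ to $\vec f:=W^{1/p}(x)\vec e$, one obtains
\begin{equation*}
\dashint_Q \|W^{1/p}(x) W^{-1/p}(y)\|^{p'}\,dy \sim \|B_Q W^{1/p}(x)\|^{p'}.
\end{equation*}
Raising to the power $p/p'$, averaging in $x$, using $\|B_Q W^{1/p}(x)\|=\|W^{1/p}(x)B_Q\|$ by Hermiticity, and applying the analogous reduction involving $A_Q$, we obtain
\begin{equation*}
\dashint_Q\left[\dashint_Q \|W^{1/p}(x)W^{-1/p}(y)\|^{p'}\,dy\right]^{p/p'}dx \sim \|A_Q B_Q\|^p,
\end{equation*}
which proves the reformulation.

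Running the same argument for $V:=W^{-p'/p}$ at exponent $p'$, and noting that $V^{1/p'}=W^{-1/p}$ and $V^{-1/p'}=W^{1/p}$ (so the reducing operator for $V$ at $p'$ is again $B_Q$ and the one for $V^{-p/p'}=W$ at $p$ is $A_Q$), yields $W^{-p'/p}\in A_{p'}(\rn,\mathbb{C}^m) \iff \sup_{Q\in\cq}\|B_Q A_Q\|<\infty$. Since $A_Q$ and $B_Q$ are Hermitian, $(A_Q B_Q)^{*}=B_Q A_Q$, hence $\|A_Q B_Q\|=\|B_Q A_Q\|$, and the two characterizations coincide. The main technical obstacle is the $\varepsilon$-net interchange of supremum and average, which is exactly where the finite-dimensionality of $\mathbb{C}^m$ enters in an essential way; everything else is structural duality at the level of the reducing operators.
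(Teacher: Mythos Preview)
Your argument is correct. The paper does not actually prove this lemma but simply cites it from \cite[Corollary 3.3]{r03}; your reducing-operator approach---establishing the symmetric characterization $W\in A_p\iff\sup_{Q}\|A_QB_Q\|<\infty$ via the basis/$\varepsilon$-net trick $\|M\|^r\sim\sum_{i=1}^m|M\vec e_i|^r$, and then closing with $\|A_QB_Q\|=\|(A_QB_Q)^*\|=\|B_QA_Q\|$---is essentially the standard proof found in that reference.
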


Now, we recall the concept of
the classical $A_p(\rn)$-weight
(see, for instance, \cite{G1}).

\begin{definition}\label{weight}
An \emph{$A_p(\rn)$-weight} $\omega$,
with $p\in[1,\infty)$, is a
locally integrable
and nonnegative function on $\rn$
satisfying that,
when $p\in(1,\infty)$,
\begin{equation*}
\sup_{Q\in\mathcal{Q}}\lf[\frac1{|Q|}\int_Q\omega(x)\,dx\r]
\lf[\frac1{|Q|}
\int_Q\lf\{\omega(x)\r\}
^{\frac1{1-p}}\,dx\r]^{p-1}<\infty
\end{equation*}
and, when $p=1$,
$$
\sup_{Q\in\mathcal{Q}}\frac{1}{|Q|}
\int_Q\omega(x)\,dx\lf[\lf\|\omega^{-1}\r\|_
{L^\infty(Q)}\r]<\infty.
$$
Define $A_\infty(\rn):=\bigcup_{p\in[1,\infty)}A_p(\rn)$.
\end{definition}
From both \cite[Corollary 2.2]{G} and
\cite[Lemma 2.1]{FR0},
we deduce the following lemma;  we omit the details here.
\begin{lemma}\label{scalar weight}
Let $p\in(0,\infty)$,
$W\in A_p(\rn,\mathbb{C}^m)$,
and $w_{\vec{y}}(x):=|W^{1/p}(x)\vec{y}|^p$
for any $x\in\rn$ and
any given $\vec{y}\in \mathbb{C}^m$.
Then, for any given
$\vec{y}\in\mathbb{C}^m\setminus\{\mathbf{0}\}$,
$w_{\vec{y}}\in A_1(\rn)$
if $p\in(0,1]$, and
$w_{\vec{y}}\in A_p(\rn)$
if $p\in(1,\infty)$.
\end{lemma}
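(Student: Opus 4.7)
The plan is to deduce both cases from a single pointwise inequality that links $|W^{1/p}(x)\vec{y}|$ at different points via the operator norm $\|W^{1/p}(x)W^{-1/p}(z)\|$ which appears in Definition \ref{Ap}. Writing $\vec{y}=W^{-1/p}(z)W^{1/p}(z)\vec{y}$ and taking Euclidean norms gives, for a.e.\ $x,z\in\rn$ and every $\vec{y}\in\mathbb{C}^m\setminus\{\mathbf{0}\}$,
\begin{equation*}
\bigl|W^{1/p}(x)\vec{y}\bigr|\leq\bigl\|W^{1/p}(x)W^{-1/p}(z)\bigr\|\cdot\bigl|W^{1/p}(z)\vec{y}\bigr|.
\end{equation*}
All of the work will consist of feeding this single inequality into the correct $A_p$ test.

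For $p\in(0,1]$, the $A_1(\rn)$ condition for $w_{\vec{y}}$ requires a uniform bound on $\frac{1}{|Q|}\int_Q w_{\vec{y}}(x)\,dx\cdot\esssup_{z\in Q}w_{\vec{y}}(z)^{-1}$. Raising the pointwise inequality to the $p$-th power and dividing by $|W^{1/p}(z)\vec{y}|^p$ bounds the integrand uniformly in $z$ by $\|W^{1/p}(x)W^{-1/p}(z)\|^{p}$; averaging in $x$ and taking $\esssup$ in $z$ reproduces exactly the $p\leq1$ form of the matrix $A_p$ condition in Definition \ref{Ap}, which is finite by hypothesis.

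For $p\in(1,\infty)$, I would verify the scalar $A_p(\rn)$ condition for $w_{\vec{y}}$ by a two-step averaging. First, swap $x$ and $z$ in the key inequality, take reciprocals, and raise to the $p'$-th power to obtain
\begin{equation*}
\bigl|W^{1/p}(x)\vec{y}\bigr|^{-p'}\leq\bigl\|W^{1/p}(z)W^{-1/p}(x)\bigr\|^{p'}\bigl|W^{1/p}(z)\vec{y}\bigr|^{-p'}.
\end{equation*}
Averaging in $x$ over $Q$ and raising to the power $p-1=p/p'$ yields
\begin{equation*}
\left[\frac{1}{|Q|}\int_Q\bigl|W^{1/p}(x)\vec{y}\bigr|^{-p'}\,dx\right]^{p-1}\leq\bigl|W^{1/p}(z)\vec{y}\bigr|^{-p}\left[\frac{1}{|Q|}\int_Q\bigl\|W^{1/p}(z)W^{-1/p}(x)\bigr\|^{p'}\,dx\right]^{p/p'}.
\end{equation*}
Multiplying by $|W^{1/p}(z)\vec{y}|^{p}$ cancels the first factor on the right, and then averaging in $z$ produces on the left exactly the scalar $A_p$ quantity $[\frac{1}{|Q|}\int_Q w_{\vec{y}}]\cdot[\frac{1}{|Q|}\int_Q w_{\vec{y}}^{1/(1-p)}]^{p-1}$ (after renaming $z\mapsto x$) and on the right exactly the matrix $A_p$ quantity from Definition \ref{Ap}, finite by assumption.

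The only potentially delicate point is bookkeeping in Case 2, namely verifying that $w_{\vec{y}}^{1/(1-p)}(x)=|W^{1/p}(x)\vec{y}|^{-p'}$ matches the exponent produced by the inequality chain; once this identification is made, no Fefferman--Stein or duality input is needed, and no further use of Lemma \ref{matrix weight} is required. The estimate is uniform in $\vec{y}\in\mathbb{C}^m\setminus\{\mathbf{0}\}$ because $\vec{y}$ drops out of the right-hand side entirely.
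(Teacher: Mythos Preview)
Your argument is correct. The paper itself does not give a proof of this lemma: it simply records that the result follows from \cite[Corollary 2.2]{G} (for $p>1$) and \cite[Lemma 2.1]{FR0} (for $0<p\le 1$) and omits the details. Your write-up supplies precisely those details, and the mechanism you use --- the pointwise bound $|W^{1/p}(x)\vec y|\le \|W^{1/p}(x)W^{-1/p}(z)\|\,|W^{1/p}(z)\vec y|$ followed by the appropriate averaging and $\esssup$ --- is the standard route and is what one finds in the cited references. The exponent bookkeeping in Case~2 is correct since $w_{\vec y}^{1/(1-p)}=|W^{1/p}(\cdot)\vec y|^{p/(1-p)}=|W^{1/p}(\cdot)\vec y|^{-p'}$, and the invertibility of $W(z)$ for a.e.\ $z$ (Definition~\ref{bn}) guarantees $|W^{1/p}(z)\vec y|>0$ a.e.\ when $\vec y\neq\mathbf{0}$, so the divisions are legitimate.
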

If $p\in(1,\infty)$, the following corollary is just
\cite[Corollary 2.3]{G}.
For the convenience of
the reader, we present some details of its proof.
\begin{corollary}\label{norm weight}
Let $p\in(0,\infty)$
and $W\in A_p(\rn,\mathbb{C}^m)$.
Then
$\|W^{1/p}\|^p\in A_1(\rn)$
if $p\in(0,1]$, and
$\|W^{1/p}\|^p\in A_p(\rn)$
if $p\in(1,\infty)$.
\end{corollary}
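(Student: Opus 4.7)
The overall strategy is to express the scalar function $\|W^{1/p}(x)\|^p$, which is defined by a supremum over the uncountable unit sphere of $\mathbb{C}^m$, as a quantity pointwise comparable to a \emph{finite} sum of expressions of the form $|W^{1/p}(x)\vec{y}_j|^p$. Each such summand already lies in $A_1(\rn)$ (respectively $A_p(\rn)$) by Lemma \ref{scalar weight}, and since these classes are stable under finite sums, the conclusion will follow.

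The main step is a discretization on the unit sphere $\Sigma:=\{\vec{y}\in\mathbb{C}^m:\,|\vec{y}|=1\}$. Fix a finite $\tfrac{1}{2}$-net $\{\vec{y}_1,\ldots,\vec{y}_N\}\subset\Sigma$, where the cardinality $N$ depends only on $m$. For any $\vec{z}\in\Sigma$, choose $j$ with $|\vec{z}-\vec{y}_j|\leq\tfrac{1}{2}$; the triangle inequality together with the definition of the operator norm yield
$$
\lf|W^{1/p}(x)\vec{z}\r|\leq \lf|W^{1/p}(x)\vec{y}_j\r|+\frac{1}{2}\lf\|W^{1/p}(x)\r\|.
$$
Taking the supremum over $\vec{z}\in\Sigma$ gives $\|W^{1/p}(x)\|\leq 2\max_{1\leq j\leq N}|W^{1/p}(x)\vec{y}_j|$; combined with the trivial reverse bound, one obtains the pointwise equivalence
$$
\lf\|W^{1/p}(x)\r\|^{p}\sim\sum_{j=1}^{N}\lf|W^{1/p}(x)\vec{y}_j\r|^{p}\qquad(x\in\rn)
$$
with constants depending only on $m$ and $p$.

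To finish, one applies Lemma \ref{scalar weight} to each $w_{\vec{y}_j}$ and verifies that a finite sum of $A_1(\rn)$ (respectively $A_p(\rn)$) weights remains in the same class. The case $p\in(0,1]$ is immediate from the sublinearity of the Hardy--Littlewood maximal operator, which gives $M(\sum_j w_{\vec{y}_j})\leq\sum_j M w_{\vec{y}_j}\ls \sum_j w_{\vec{y}_j}$. The case $p\in(1,\infty)$ uses the pointwise inequality $(w_1+w_2)^{1-p'}\leq\min\{w_1^{1-p'},w_2^{1-p'}\}$, which, upon integration over an arbitrary cube $Q$ and combination with the elementary bound $(a_1+a_2)c\leq a_1 b_1+a_2 b_2$ (valid whenever $c\leq b_1$ and $c\leq b_2$), shows that the $A_p$ constant of $w_1+w_2$ is at most the sum of those of $w_1$ and $w_2$; an iteration then handles arbitrary finite sums.

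The only delicate bookkeeping is to ensure that the net and the equivalence constants in the discretization depend on $m$ and $p$ alone, so that the final $A_1(\rn)$ or $A_p(\rn)$ constant of $\|W^{1/p}\|^p$ is controlled solely in terms of the matrix $A_p(\rn,\mathbb{C}^m)$ constant of $W$; apart from this, the argument is routine.
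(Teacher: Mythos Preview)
Your proof is correct and follows essentially the same strategy as the paper: reduce $\|W^{1/p}\|^p$ to a finite sum of weights $|W^{1/p}\vec{y}_j|^p$, invoke Lemma~\ref{scalar weight} on each summand, and use closure of the $A_1$/$A_p$ classes under finite sums. The only difference is cosmetic: the paper obtains the pointwise equivalence by citing \cite[Lemma 3.2]{r03}, which gives directly $\|W^{1/p}(x)\|^p\sim\sum_{i=1}^m|W^{1/p}(x)\vec{e}_i|^p$ with the \emph{standard basis} in place of your $\tfrac12$-net, and it leaves the closure-under-sums step implicit, whereas you spell it out.
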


\begin{proof}
By \cite[Lemma 3.2]{r03},
we conclude that, for any given $p\in(0,\infty)$ and
for any $x\in\rn$,
$$
\lf\|W^{1/p}(x)\r\|^p\sim\sum_{i=1}^{m}
\lf|W^{1/p}(x)\vec{e_i}\r|^p,
$$
where
$\{\vec{e_1},\,\dots,\,\vec{e_m}\}$
is the standard unit basis of $\mathbb{C}^m$.
Then, by Lemma \ref{scalar weight}, we conclude that,
for any $i\in\{1,\,\dots,\,m\}$,
$|W^{1/p}\vec{e_i}|^p$ is an $A_1(\rn)$-weight
if $p\in(0,1]$, and
$|W^{1/p}\vec{e_i}|^p$ is an $A_p(\rn)$-weight
if $p\in(1,\infty)$,
therefore, their finite sum is as well. This
finishes the proof of Corollary \ref{norm weight}.
\end{proof}

The following definition
comes from \cite[p. 1230]{FR0}.

\begin{definition}\label{doubling weight}
Let $p\in(0,\infty)$. A non-zero matrix weight $W$
is called
\emph{a doubling matrix weight of order $p$}
if there exists a positive
constant $C$ such that,
for any cube $Q\subset\rn$
and any $\vec{z}\in\mathbb{C}^m$,
\begin{equation}\label{wsd}
\int_{2Q}\lf|W^{1/p}(x)\vec{z}\r|^p\,dx\leq C
\int_{Q}\lf|W^{1/p}(x)\vec{z}\r|^p\,dx,
\end{equation}
where $2Q$ denotes the cube concentric with $Q$ and
having twice the edge length of $Q$.
Let $$\beta:=
\min\left\{\beta\in(0,\infty):\ \
\text{\eqref{wsd} holds with $C=2^\beta$}\right\}.$$
Then $\beta$ is called the
\emph{doubling exponent of the doubling matrix
weight $W$ of order $p$}.
For simplicity,  such a $\beta$ is also called the
\emph{doubling exponent of $W$}.
\end{definition}

The following lemma can be found in \cite[Lemma 2.2]{FR}.
\begin{lemma}\label{apd}
Let $p\in(0,\infty)$
and $W\in A_p(\rn,\mathbb{C}^m)$.
Then $W$ is a doubling matrix weight of order $p$.
\end{lemma}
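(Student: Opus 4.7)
The plan is to reduce the matrix-weighted doubling inequality \eqref{wsd} to Definition \ref{Ap} by means of the pointwise comparison
$$|W^{1/p}(x)\vec{z}|^p\leq\|W^{1/p}(x)W^{-1/p}(y)\|^p\,|W^{1/p}(y)\vec{z}|^p,$$
valid for almost every $x,y\in\rn$. The crucial feature of this bound is that it separates variables: all of the $\vec{z}$-dependence sits in the scalar factor $|W^{1/p}(y)\vec{z}|^p$, which will be averaged over $y\in Q$, while the operator-norm factor is precisely what the matrix $A_p$ condition controls when integrated.

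For $p\in(0,1]$, the approach is to integrate the displayed inequality in $x$ over $2Q$ and then take the essential infimum in $y\in Q\subset 2Q$. The $p\in(0,1]$ case of Definition \ref{Ap}, applied to the cube $2Q$, bounds $\frac{1}{|2Q|}\int_{2Q}\|W^{1/p}(x)W^{-1/p}(y)\|^p\,dx$ uniformly in $y\in 2Q$ by the matrix $A_p$ constant of $W$. Combining this with the trivial inequality $\essinf_{y\in Q}|W^{1/p}(y)\vec{z}|^p\leq\frac{1}{|Q|}\int_Q|W^{1/p}(y)\vec{z}|^p\,dy$ yields
$$\int_{2Q}|W^{1/p}(x)\vec{z}|^p\,dx\lesssim|2Q|\essinf_{y\in Q}|W^{1/p}(y)\vec{z}|^p\leq 2^n\int_Q|W^{1/p}(y)\vec{z}|^p\,dy,$$
with an implicit constant depending only on $n$ and the matrix $A_p$ constant of $W$.

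For $p\in(1,\infty)$, one first averages the displayed inequality in $y\in Q$ (the left-hand side being independent of $y$) and applies H\"older's inequality to obtain
$$|W^{1/p}(x)\vec{z}|\leq\left(\frac{1}{|Q|}\int_Q\|W^{1/p}(x)W^{-1/p}(y)\|^{p'}\,dy\right)^{1/p'}\left(\frac{1}{|Q|}\int_Q|W^{1/p}(y)\vec{z}|^p\,dy\right)^{1/p}.$$
Raising this to the $p$th power and integrating in $x\in 2Q$, the $\vec{z}$-factor pulls out of the $x$-integral, and one then enlarges the inner average from $Q$ to $2Q$ at a cost of $2^{np/p'}$ so that both integrations run over the common cube $2Q$; the $p>1$ case of Definition \ref{Ap} on $2Q$ then bounds the remaining iterated integral by a constant multiple of $|2Q|$.

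The main obstacle is bookkeeping: each step must preserve independence of the implicit constants from both the cube $Q$ and the vector $\vec{z}$. This hinges entirely on arranging the inner $y$-average and the outer $x$-integral to share the same cube ($2Q$) before invoking Definition \ref{Ap}, and on the variable-separation feature of the pointwise comparison, which prevents the operator norm from carrying any hidden dependence on $\vec{z}$. Once these uniformities are verified, \eqref{wsd} holds with a constant $C$ depending only on $n$, $p$, and the matrix $A_p$ constant of $W$.
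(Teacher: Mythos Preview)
Your argument is correct. The separation-of-variables observation
\[
|W^{1/p}(x)\vec{z}|\leq\|W^{1/p}(x)W^{-1/p}(y)\|\,|W^{1/p}(y)\vec{z}|
\]
is exactly what is needed, and the two cases are handled properly: for $p\in(0,1]$ the $A_p$ condition on $2Q$ gives a bound on $\frac{1}{|2Q|}\int_{2Q}\|W^{1/p}(x)W^{-1/p}(y)\|^p\,dx$ that is uniform in $y\in 2Q\supset Q$, after which the essential infimum over $y\in Q$ applies cleanly to the remaining scalar factor; for $p\in(1,\infty)$ the H\"older step followed by the enlargement $Q\to 2Q$ in the inner integral (at cost $2^{np/p'}$) lines up both integrations over the same cube so that Definition \ref{Ap} applies directly. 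All constants are visibly independent of $Q$ and $\vec{z}$.

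The paper itself does not prove this lemma; it simply records it as \cite[Lemma 2.2]{FR}. Your proposal therefore supplies a self-contained proof where the paper gives only a citation. The argument you give is essentially the standard one (and is in spirit the same as the scalar proof that $A_p$ weights are doubling, adapted to matrices via the operator-norm comparison), so there is no meaningful methodological difference to discuss beyond the fact that you have written it out.
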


\begin{remark}\label{ball}
	It is easy to see that,
	if we replace any cube $Q$ with
	any ball $B\subset\rn$ in Definitions \ref{Ap}
	and \ref{doubling weight}, then
	Lemmas \ref{matrix weight}
	and \ref{apd} still hold true.
\end{remark}

In what follows,
for any $j\in\mathbb{Z}$
and $k:=(k_1,\,\dots,k_n)\in\mathbb{Z}^{n}$,
let $Q_{jk}:=\prod_{i=1}^n 2^{-j}[k_i,k_i+1)$,
$\mathcal{D}:=\{Q_{jk}:j\in\mathbb{Z},k\in\mathbb{Z}^{n}\}$,
and
\begin{equation}\label{dj}
\mathcal{D}_{j}:=\{Q_{jk}:k\in{\mathbb{Z}}^{n}\}.
\end{equation}

\begin{definition}\label{yue}
Let $m\in\nn$,
$p\in(0,\infty)$,
and $W$ be a matrix weight
from $\rn$ to $M_m(\mathbb{C})$.
A sequence
$\{A_Q^{(W)}\}_{Q\in\mathcal{D}}$
of positive definite
$m\times m$ matrices
is called
\emph{a sequence of
reducing operators of order $p$ for $W$}
if there exist positive constants
$C_1$ and $C_2$
such that,
for any $\vec{z}\in\mathbb{C}^m$
and $Q\in\mathcal{D}$,
$$
C_1\lf|A_Q^{(W)}\vec{z}\r|
\leq\lf[\frac{1}{|Q|}\int_Q
\lf|W^{1/p}(x)\vec{z}\r|^pdx\r]^{1/p}
\leq C_2\lf|A_Q^{(W)}\vec{z}\r|.
$$
\end{definition}
	
For simplicity,
the sequence $\{A_Q^{(W)}\}_{Q\in\mathcal{D}}$
of reducing operators of order $p$ for $W$
is denoted by $\{A_Q\}_{Q\in\mathcal{D}}$.

\begin{remark}\label{aqe}
Let $m\in\nn$.
From \cite[Proposition 1.2]{G}
and \cite[p. 1237]{FR0},
we deduce that,
for any $p\in(0,\infty)$ and any
matrix weight $W$
from $\rn$ to $M_m(\mathbb{C})$,
a sequence of
reducing operators of
order $p$ for $W$
in Definition \ref{yue} exists.
\end{remark}

The following lemmas are
respectively a part of \cite[Lemmas 3.2 and 3.3]{FR}.

\begin{lemma}\label{a123}
Let $p\in(1,\infty)$,
$p':=p/(p-1)$,
$W\in A_p(\rn,\mathbb{C}^m)$,
and $\{A_Q\}_{Q\in\mathcal{D}}$
be a sequence of reducing operators
of order $p$ for $W$.
Then there exists a  $\delta(W)\in(0,\infty)$
such that, for any $\eta\in(0, p'+\delta(W))$,
\begin{equation*}
\sup_{Q\in\mathcal{D}}
\frac{1}{|Q|}\int_{Q}\lf\|A_QW^{-1/p}(x)\r\|^{\eta}
\,dx<\infty .
\end{equation*}
\end{lemma}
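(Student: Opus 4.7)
The plan is to split the argument into two parts: a borderline estimate at the exponent $\eta=p'$, obtained directly from the matrix $A_p$ condition; and a self-improvement from $p'$ to some $p'+\delta(W)$, obtained from a scalar reverse H\"older inequality. Once the endpoint case $\eta=p'$ is established, the case $\eta\in(0,p')$ follows from it by Jensen's inequality applied to the cube average, while the case $\eta\in(p',p'+\delta(W))$ follows from the self-improvement.

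I would begin by establishing the pointwise equivalence
\begin{equation*}
\lf\|A_Q W^{-1/p}(y)\r\|^p \sim \fint_Q \lf\|W^{1/p}(x) W^{-1/p}(y)\r\|^p\,dx,
\end{equation*}
uniformly in $y\in Q$ and $Q\in\mathcal{D}$. The $\lesssim$ direction is immediate from Definition \ref{yue} applied to $\vec{w}:=W^{-1/p}(y)\vec{z}$ with $|\vec{z}|=1$ and a supremum over such $\vec{z}$; the $\gtrsim$ direction rests on the elementary $m$-dependent equivalence $\|M\|^p\sim\sum_{i=1}^m|M\vec{e_i}|^p$ (already used in the proof of Corollary \ref{norm weight}) applied with $M=A_QW^{-1/p}(y)$ on one side and with $M=W^{1/p}(x)W^{-1/p}(y)$ on the other, together with Definition \ref{yue} vector by vector. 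Raising this equivalence to the $p'/p$ power and averaging in $y$ reduces the case $\eta=p'$ to
\begin{equation*}
\sup_{Q\in\mathcal{D}}\fint_Q\lf(\fint_Q\lf\|W^{1/p}(x)W^{-1/p}(y)\r\|^p\,dx\r)^{p'/p}dy<\infty,
\end{equation*}
which, after relabelling $x\leftrightarrow y$ and using the Hermitian property $(W^{\pm1/p})^*=W^{\pm1/p}$ (so $\|W^{1/p}(x)W^{-1/p}(y)\|=\|W^{-1/p}(y)W^{1/p}(x)\|$), is precisely the matrix $A_{p'}$ condition for $W^{-p'/p}$, valid by Lemma \ref{matrix weight}.

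To cross the endpoint $p'$ I would pass to the adjoint: $(A_QW^{-1/p}(y))^*=W^{-1/p}(y)A_Q$ since $A_Q$ and $W^{\pm1/p}$ are Hermitian, and hence $\|A_QW^{-1/p}(y)\|=\|W^{-1/p}(y)A_Q\|$, which yields
\begin{equation*}
\lf\|A_Q W^{-1/p}(y)\r\|^\eta \sim \sum_{i=1}^m \lf|W^{-1/p}(y)A_Q\vec{e_i}\r|^\eta
\end{equation*}
for every $\eta\in(0,\infty)$. By Lemma \ref{scalar weight} applied to $W^{-p'/p}\in A_{p'}(\rn,\CC^m)$, each scalar weight $w_{\vec{z}}(y):=|W^{-1/p}(y)\vec{z}|^{p'}$ is an $A_{p'}(\rn)$-weight whose $A_{p'}(\rn)$-constant is dominated by $[W^{-p'/p}]_{A_{p'}(\rn,\CC^m)}$ uniformly in $\vec{z}$. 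The classical quantitative Gehring/Coifman--Fefferman reverse H\"older inequality then furnishes $\delta=\delta(W)>0$ such that
\begin{equation*}
\lf(\fint_Q |W^{-1/p}(y)\vec{z}|^{p'(1+\delta)}\,dy\r)^{1/(1+\delta)}\lesssim \fint_Q |W^{-1/p}(y)\vec{z}|^{p'}\,dy
\end{equation*}
for every $Q\in\mathcal{D}$ and every $\vec{z}\in\CC^m\setminus\{\mathbf{0}\}$. Specializing $\vec{z}=A_Q\vec{e_i}$, summing in $i$, and using the preceding equivalence at $\eta=p'(1+\delta)$ reduces the claim at this exponent to the already-proved bound at $\eta=p'$; setting $\delta(W):=p'\delta$ then completes the argument.

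The delicate step is the uniformity of the reverse H\"older exponent $\delta$ across the family $\{w_{\vec{z}}\}_{\vec{z}\in\CC^m\setminus\{\mathbf{0}\}}$: because the argument specializes to the $Q$-dependent vectors $\vec{z}=A_Q\vec{e_i}$, a $\vec{z}$-dependent (hence $Q$-dependent) $\delta$ would be useless. Obtaining this uniformity requires tracking constants through the proof of Lemma \ref{scalar weight} to verify that $[w_{\vec{z}}]_{A_{p'}(\rn)}$ is bounded by a constant multiple of $[W^{-p'/p}]_{A_{p'}(\rn,\CC^m)}$ with implicit constant independent of $\vec{z}$, after which the quantitative form of Gehring's lemma, in which the reverse H\"older exponent depends only on the scalar $A_{p'}$-constant, supplies a single $\delta(W)$ that works for all $Q\in\mathcal{D}$ and all $\vec{z}$.
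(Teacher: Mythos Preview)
The paper does not prove Lemma~\ref{a123} at all; it simply cites \cite[Lemma~3.2]{FR} and moves on. Your proposal therefore cannot be compared line-by-line against the paper's argument, but it is a correct and complete sketch of a proof, and it follows essentially the same route as the cited reference: reduce to the dual weight $W^{-p'/p}\in A_{p'}$ via Lemma~\ref{matrix weight}, identify $\|A_QW^{-1/p}(\cdot)\|$ with averages controlled by the matrix $A_{p'}$ condition, and then push past the endpoint $\eta=p'$ by applying the scalar reverse H\"older inequality to the family $y\mapsto|W^{-1/p}(y)\vec z|^{p'}$ of $A_{p'}(\rn)$-weights furnished by Lemma~\ref{scalar weight}. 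Your identification of the only genuinely delicate point---the need for a $\vec z$-independent reverse H\"older exponent, obtained by tracking that $[w_{\vec z}]_{A_{p'}(\rn)}$ is bounded uniformly in $\vec z$---is exactly right and is the same issue Frazier--Roudenko address.

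One small cosmetic remark: in the summation step after applying reverse H\"older to each $\vec z=A_Q\vec e_i$, you should note explicitly that passing between $\sum_{i=1}^m a_i^{1+\delta}$ and $(\sum_{i=1}^m a_i)^{1+\delta}$ costs only an $m$-dependent constant; this is trivial but worth a sentence so the reader sees the chain closes.
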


\begin{lemma}\label{ainf}
Let $p\in(0,1]$,
$W\in A_p(\rn,\mathbb{C}^m)$,
and $\{A_Q\}_{Q\in\mathcal{D}}$
be a sequence of
reducing operators
of order $p$ for $W$.
Then
\begin{equation*}
\sup_{Q\in\mathcal{D}}
\mathop{\esssup}_{x\in Q}
\lf\|A_QW^{-1/p}(x)\r\|<\infty.
\end{equation*}
\end{lemma}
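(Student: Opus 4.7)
The plan is to reduce the quantity $\|A_Q W^{-1/p}(y)\|$ to an $L^p$-average of $\|W^{1/p}(\cdot)W^{-1/p}(y)\|^p$ over $Q$, which is controlled by the $A_p$-condition for $p\in(0,1]$ given in Definition \ref{Ap}.

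First, I would rewrite the operator norm via a change of variables. For fixed $Q\in\mathcal{D}$ and almost every $y\in Q$ (so that $W^{-1/p}(y)$ exists and is invertible), set $\vec{w}:=W^{-1/p}(y)\vec{z}$; since $W^{-1/p}(y)$ is invertible a.e., this reparametrizes the unit ball in $\mathbb{C}^m$, giving
\begin{equation*}
\lf\|A_Q W^{-1/p}(y)\r\|
=\sup_{\vec{w}\in\mathbb{C}^m\setminus\{\mathbf{0}\}}
\frac{|A_Q \vec{w}|}{|W^{1/p}(y)\vec{w}|}.
\end{equation*}

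Next, I would estimate the numerator using the upper bound in Definition \ref{yue}: for any $\vec{w}\in\mathbb{C}^m$,
\begin{equation*}
|A_Q\vec{w}|\leq C_1^{-1}
\lf[\frac{1}{|Q|}\int_Q\lf|W^{1/p}(x)\vec{w}\r|^p\,dx\r]^{1/p}.
\end{equation*}
Then, using the pointwise inequality $|W^{1/p}(x)\vec{w}|=|W^{1/p}(x)W^{-1/p}(y)W^{1/p}(y)\vec{w}|\leq \|W^{1/p}(x)W^{-1/p}(y)\|\,|W^{1/p}(y)\vec{w}|$, I can factor the denominator out of the integral, obtaining
\begin{equation*}
\frac{|A_Q\vec{w}|}{|W^{1/p}(y)\vec{w}|}
\leq C_1^{-1}\lf[\frac{1}{|Q|}\int_Q
\lf\|W^{1/p}(x)W^{-1/p}(y)\r\|^p\,dx\r]^{1/p}.
\end{equation*}
Since the right-hand side no longer depends on $\vec{w}$, taking the supremum over $\vec{w}\neq\mathbf{0}$ yields the same bound for $\|A_QW^{-1/p}(y)\|$.

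Finally, I would take the essential supremum over $y\in Q$ and then the supremum over $Q\in\mathcal{D}$. Because $\mathcal{D}\subset\mathcal{Q}$, the $A_p(\rn,\mathbb{C}^m)$-condition in Definition \ref{Ap} (case $p\in(0,1]$) gives immediately
\begin{equation*}
\sup_{Q\in\mathcal{D}}\mathop{\esssup}_{y\in Q}
\lf\|A_Q W^{-1/p}(y)\r\|^p
\leq C_1^{-p}\sup_{Q\in\mathcal{Q}}\mathop{\esssup}_{y\in Q}
\frac{1}{|Q|}\int_Q\lf\|W^{1/p}(x)W^{-1/p}(y)\r\|^p\,dx<\infty,
\end{equation*}
which is the desired conclusion. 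The argument is essentially routine once the change of variables is performed; the only subtlety I would watch is the almost-everywhere invertibility of $W^{-1/p}$ (which follows from $W$ being a matrix weight in the sense of Definition \ref{bn}), so that the parametrization by $\vec{w}$ is legitimate for almost every $y\in Q$. I do not anticipate a serious obstacle here, and no doubling or duality is needed for this particular lemma, in contrast to the case $p>1$ treated in Lemma \ref{a123}.
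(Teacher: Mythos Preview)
Your proof is correct. The paper does not supply its own proof of this lemma but cites \cite[Lemma~3.3]{FR}; your argument is the natural direct verification from the definitions of the reducing operator and the $A_p$-condition for $p\in(0,1]$, and is essentially the standard one.
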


\subsection{Matrix-Weighted Triebel--Lizorkin Spaces\label{s22}}

In this section, we begin with recalling the concepts
of both matrix-weighted
Triebel--Lizorkin spaces and sequence matrix-weighted
Triebel--Lizorkin spaces.
Then we prove the rationality of Definitions
\ref{FBW} and \ref{AQ}.

In what follows,
for any $m\in\nn$,
let
\begin{align*}
[\cs'_{\infty}(\rn)]^m:=
\lf\{\vec{f}:=(f_1,\,\ldots,\,f_m)^T:\ \
\text{for any}\ i\in\{1,\,\ldots,\,m\},
\ f_i\in \cs'_{\infty}(\rn)\r\}.
\end{align*}
For any
$\vec{f}:=(f_1,\,\ldots,\,f_m)^T
\in[\cs'_{\infty}(\rn)]^m$
and $\varphi\in\cs_{\infty}(\rn)$,
let
\begin{align*}
\varphi\ast\vec{f}:=
\lf(\varphi\ast f_1,\,\ldots,\,\varphi\ast f_m\r)^T
\end{align*}
and
$\varphi_j(\cdot):=2^{jn}\varphi(2^j \cdot)$
with $j\in\zz$.
For any $\phi\in\cs(\rn)$, $\widehat\phi$
denotes its \emph{Fourier transform}
which is defined by setting,
for any $\xi\in\rn$,
$$
\widehat\phi(\xi):=(2\pi)^{-n/2}\int_\rn \phi(x)e^{-ix\xi}\,dx.
$$
For any $f\in\cs'(\rn)$,
$\widehat f$ is defined by setting,
for any $\varphi\in\mathcal{S}(\rn)$,
$\langle\widehat f,\varphi\rangle:=\langle f,\widehat{\varphi}\rangle$;
also, for any $f\in\mathcal{S}(\rn)$
[resp., $\mathcal{S}'(\rn)$],
$f^{\vee}$ denotes its \emph{inverse Fourier transform},
$$f^{\vee}(\cdot):=(2\pi)^{n/2}\int_{\rn}\widehat{f}(x)e^{ix\cdot}
\,dx$$
[resp., $\langle f^{\vee},\varphi\rangle:=\langle f,
\varphi^{\vee}\rangle$
for any $\varphi\in\cs(\rn)$].
For any $\varphi\in\cs(\rn)$, $\supp\widehat{\varphi}:=
\{x\in\rn:\widehat{\varphi}(x)\neq 0\}$
and, for any $f\in\cs'(\rn)$,
$$
\supp f:=\cap\lf\{\text{closed set }
K\subset \rn:\langle f,\,\varphi\rangle=0
\text{ if }\varphi\in\cs(\rn)\,\text{and}\,\supp\varphi
\subset \rn\setminus K \r\},
$$
which can be found in \cite[Definition 2.3.16]{G1}.

\begin{definition}\label{FBW}
Let $m\in\mathbb N$,
$\alpha\in\mathbb{R}$,
$p\in(0,\infty)$,
$q\in(0,\infty]$,
$W$ be a matrix weight
from $\rn$ to $M_m(\mathbb{C})$,
and $\varphi\in\cs(\rn)$.
Furthermore, assume that
\begin{itemize}
\item [(T1)] for any $x\in\rn\setminus\{\mathbf{0}\}$,
there exists an $l\in\zz$
such that $\widehat{\varphi}(2^l x)\neq 0$,
\item [(T2)] $\overline{\supp\widehat{\varphi}}
    \subset\{x\in\rn:|x|<\pi\}$
is  bounded away from the origin.
\end{itemize}
Then
the \emph{matrix-weighted Triebel--Lizorkin space
$\dot{F}_{p,\varphi}^{\alpha,q}(W)$}
is defined by setting
\begin{equation*}
\dot{F}_{p,\varphi}^{\alpha,q}(W):=
\left\{\vec{f}\in\lf[\cs'_{\infty}(\rn)\r]^m: \ \
\lf\|\vec f\r\|_
{\dot{F}_{p,\varphi}^{\alpha,q}(W)}
<\infty\right\},
\end{equation*}
where
$$
\lf\|\vec{f}\r\|_
{\dot{F}_{p,\varphi}^{\alpha,q}(W)}:=
\lf\|\left[\sum_{j\in\mathbb{Z}}
\lf|2^{j\alpha}W^{1/p}\lf(\varphi_{j}\ast\vec{f}\r)\r|^{q}
\right]^{1/q}\r\|_{L^{p}(\rn)}$$
with suitable modification made when $q=\infty$.
\end{definition}

\begin{remark}\label{r26}
\begin{itemize}
\item [(i)]	Observe that,
if $\widehat{\varphi}(x)>0$
for any $\{x\in\rn:\ \epsilon\leq|x|\leq\pi-b\}$,
where $b\in(0,\pi-1]$ and $\epsilon\in(0,(\pi-b)/2)$,
then $\varphi$ automatically satisfies that,
for any $x\in\rn\setminus\{\mathbf{0}\}$,
there exists an $\ell\in\zz$
such that $\widehat{\varphi}(2^{\ell}x)\neq 0$
and hence, in this case,
the assumption (T1) in Definition \ref{FBW} is superfluous;
see \cite[Lemma 3.18 and Remark 3.19]{WYYZ} for the details.		
\item[(ii)] Let
$\alpha\in\mathbb{R}$,
$p\in(0,\infty)$, and
$q\in(0,\infty]$ be the same as in  Definition \ref{FBW}.
If $m=1$ and $\varphi\in\cs(\rn)$
satisfies both
\begin{equation}\label{2.1mm}
\supp\widehat{\varphi}\subset\{\xi\in\rn:1/2\leq|\xi|\leq 2\}
\end{equation}
and
\begin{equation}\label{2.2mm}
\left|\widehat{\varphi}(\xi)\right|\geq c>0
\end{equation}
when $3/5\leq|\xi|\leq 5/3$ with $c$ being a positive constant independent of
$\xi$,
then $\dot{F}_{p,\varphi}^{\alpha,q}(W)$ in Definition \ref{FBW}
is independent of the choice of $\varphi$ and it
coincides with the weighted Triebel--Lizorkin space
$\dot{F}_{p}^{\alpha,q}(\omega)$.
For more details on weighted Triebel--Lizorkin spaces,
we refer the reader to
\cite{b,bpt,Y2014}.
\item [(iii)]  Let
$\alpha\in\mathbb{R}$,
$p\in(0,\infty)$, and
$q\in(0,\infty]$ be the same as in  Definition \ref{FBW}.
If $m=1$, $W=1$,
and $\varphi\in\cs(\rn)$ satisfies both \eqref{2.1mm} and \eqref{2.2mm},
then $\dot{F}_{p,\varphi}^{\alpha,q}(W)$ in Definition
\ref{FBW} is
independent of the choice of  $\varphi$
(see, for instance, \cite[Remark 2.6]{fj2}) and it
coincides with the Triebel--Lizorkin space
$\dot{F}_{p}^{\alpha,q}$ in
\cite[p. 46]{fj2}.
\end{itemize}
\end{remark}

\begin{definition}\label{AQ}
Let $m\in\mathbb N$,
$\alpha\in\mathbb{R}$,
$p\in(0,\infty)$, $q\in(0,\infty]$,
$\{A_Q\}_{Q\in\mathcal{D}}$
be a sequence of $m\times m$
nonnegative definite matrices,
and $\varphi\in\cs(\rn)$ satisfy both (T1) and (T2)
of Definition \ref{FBW}.
The \emph{$\{A_Q\}$-Triebel--Lizorkin space
$\dot{F}_{p,\varphi}^{\alpha,q}(\{A_Q\})$}
is defined by setting
\begin{equation*}
\dot{F}_{p,\varphi}^{\alpha,q}(\{A_Q\}):=
\left\{\vec{f}\in\lf[\cs'_{\infty}(\rn)\r]^m: \ \
\lf\|\vec f\r\|_
{\dot{F}_{p,\varphi}^{\alpha,q}(\{A_Q\})}
<\infty\right\},
\end{equation*}
where
$$
\lf\|\vec{f}\r\|_
{\dot{F}_{p,\varphi}^{\alpha,q}(\{A_Q\})}:=
\lf\|\left[\sum_{j\in\mathbb{Z}}\sum_{Q\in\mathcal{D}_j}
\left(2^{j\alpha}\left|A_Q\varphi_{j}*\vec{f}\,
\right|\mathbf{1}_Q\right)^{q}
\right]^{1/q}\r\|_{L^{p}(\rn)}
$$
with suitable modification made when $q=\infty$.
\end{definition}

\begin{remark}\label{r28}
Let $m\in\mathbb N$,
$\alpha\in\mathbb{R}$,
$p\in(0,\infty)$, $q\in(0,\infty]$,
and $\{A_Q\}_{Q\in\mathcal{D}}$ be a
sequence of nonnegative definite matrices in Definition
\ref{AQ}.
If $m=1$, $A_Q=1$
for any $Q\in\mathcal{D}$,
and $\varphi\in\cs(\rn)$ satisfies both \eqref{2.1mm} and \eqref{2.2mm},
then $\dot{F}_{p,\varphi}^{\alpha,q}(\{A_Q\})$
in Definition \ref{AQ}
is independent of the choice of $\varphi$
(see, for instance, \cite[Remark 2.6]{fj2}) and it
coincides with the Triebel--Lizorkin space
$\dot{F}_{p}^{\alpha,q}$ in
\cite[p. 46]{fj2}.
\end{remark}

To prove the rationality of Definitions \ref{FBW}
 and \ref{AQ}, we first
recall some classical results (Lemmas \ref{CE}, \ref{bh}, and \ref{fz}), which are
 just \cite[Lemma 3.18]{WYYZ},
\cite[Lemma 2.1]{YY} (see also \cite[Lemma 2.1]{FJ}),
and \cite[Lemma 2.1]{YY1}, respectively.

\begin{lemma}\label{CE}
Let $\varphi$ be a Schwartz function
satisfying that,
for any $x\in\rn \setminus\{\mathbf{0} \}$,
there exists an $l\in\mathbb Z$
such that $\widehat\varphi(2^l x)\neq 0$.
Then there exists a
$\psi\in\cs(\rn)$
such that
$\widehat{\psi}\in\mathcal{C}_{\c}^{\infty}(\rn)$
with its support away from origin,
$\widehat{\varphi}\widehat{\psi}\geq 0$,
and

\begin{equation}\label{cf}
\sum_{j\in\mathbb Z}
\widehat{\varphi}(2^{-j}x)
\widehat{\psi}(2^{-j}x)=1
\end{equation}
for any $x\in\rn \setminus\{\mathbf{0} \}$.
\end{lemma}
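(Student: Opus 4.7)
The plan is to build $\psi$ by prescribing $\widehat\psi$ via a Calder\'on-type reproducing formula of the shape $\widehat\psi := \overline{\widehat\varphi}\,\chi/D$, where $\chi\in\mathcal{C}_{\c}^{\infty}(\rn)$ is a nonnegative bump function supported in a sufficiently wide annulus away from the origin, and
$$
D(\xi):=\sum_{l\in\mathbb Z}\lf|\widehat\varphi\lf(2^l\xi\r)\r|^2\chi\lf(2^l\xi\r).
$$
Since $\chi$ has compact support in $\rn\setminus\{\mathbf 0\}$ and $\widehat\varphi$ is smooth, the sum defining $D$ is locally finite, so $D\in C^\infty(\rn\setminus\{\mathbf 0\})$; a direct reindexing gives the dyadic invariance $D(2\xi)=D(\xi)$. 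Once we secure $D>0$ on $\rn\setminus\{\mathbf 0\}$, setting $\widehat\psi(\xi):=\overline{\widehat\varphi(\xi)}\chi(\xi)/D(\xi)$ makes $\widehat\psi\in\mathcal{C}_{\c}^{\infty}(\rn)$ with $\supp\widehat\psi\subset\supp\chi$, manifestly bounded away from the origin; it yields $\widehat\varphi\widehat\psi=|\widehat\varphi|^2\chi/D\geq 0$; and, using $D(2^{-j}\xi)=D(\xi)$,
$$
\sum_{j\in\mathbb Z}\widehat\varphi\lf(2^{-j}\xi\r)\widehat\psi\lf(2^{-j}\xi\r)=\frac{1}{D(\xi)}\sum_{j\in\mathbb Z}\lf|\widehat\varphi\lf(2^{-j}\xi\r)\r|^2\chi\lf(2^{-j}\xi\r)=\frac{D(\xi)}{D(\xi)}=1
$$
for every $\xi\in\rn\setminus\{\mathbf 0\}$, which is exactly the identity \eqref{cf}.

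The key task, and the main obstacle, is to choose $\chi$ so that $D$ is strictly positive on $\rn\setminus\{\mathbf 0\}$. By the dyadic invariance of $D$, it is enough to prove $D>0$ on the compact annulus $A:=\{\xi\in\rn:1\leq|\xi|\leq 2\}$; continuity of $D$ then upgrades this to a uniform lower bound on $A$. To arrange this, I would use the Tauberian nondegeneracy hypothesis combined with a compactness argument: for each $\xi\in A$ pick $l(\xi)\in\mathbb Z$ with $\widehat\varphi(2^{l(\xi)}\xi)\neq 0$; continuity of $\widehat\varphi$ extends this to an open neighborhood of $\xi$; extracting a finite subcover of $A$ produces an integer $L\in\mathbb N$ such that, for every $\xi\in A$, at least one $l\in\{-L,\ldots,L\}$ satisfies $\widehat\varphi(2^l\xi)\neq 0$.

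With $L$ in hand, I would take $\chi\in\mathcal{C}_{\c}^{\infty}(\rn)$ nonnegative with $\chi>0$ on $\{\xi\in\rn:2^{-L-1}<|\xi|<2^{L+2}\}$ and $\supp\chi\subset\{\xi\in\rn:2^{-L-2}\leq|\xi|\leq 2^{L+3}\}$. For any $\xi\in A$ and the corresponding index $l\in\{-L,\ldots,L\}$, one has $|2^l\xi|\in[2^{-L},2^{L+1}]$, so $\chi(2^l\xi)>0$ and the $l$-th term of $D(\xi)$ is strictly positive, giving $D(\xi)>0$. Combined with the continuity and dyadic invariance of $D$, this yields the uniform lower bound on $\supp\chi$ needed to guarantee that $\widehat\psi=\overline{\widehat\varphi}\chi/D$ is smooth and compactly supported, finishing the proof. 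The only delicate point is the compactness extraction of $L$, which converts the purely pointwise nondegeneracy of $\widehat\varphi$ into the uniform, quantitative control of dilation scales that the construction requires.
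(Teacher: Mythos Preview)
Your argument is correct. The construction $\widehat\psi=\overline{\widehat\varphi}\,\chi/D$ with the dyadically invariant denominator $D(\xi)=\sum_{l\in\mathbb Z}|\widehat\varphi(2^l\xi)|^2\chi(2^l\xi)$ is the standard Calder\'on-type device, and your compactness extraction of the integer $L$ from the pointwise Tauberian hypothesis is exactly what is needed to guarantee $D>0$ on the fundamental annulus (and hence, by dyadic invariance, on all of $\rn\setminus\{\mathbf 0\}$). The verification that $\widehat\psi\in\mathcal C_{\c}^\infty(\rn)$ with support away from the origin, that $\widehat\varphi\,\widehat\psi=|\widehat\varphi|^2\chi/D\geq 0$, and that the dilated products sum to $1$ is clean and complete.

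As for comparison with the paper: the paper does not prove this lemma at all but simply quotes it from \cite[Lemma~3.18]{WYYZ}. Your proof is essentially the argument that lies behind that reference (and behind the classical versions in Frazier--Jawerth and Peetre), so there is no genuinely different route to compare.
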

In what follows,
for any $Q\in\mathcal{D}$,
let  $\ell(Q)$ denote its
\emph{edge length}
and $x_Q$ its
\emph{lower left corner}.

\begin{lemma}\label{bh}
Let $\varphi$ and $\psi$
be the Schwartz functions such that
$\overline{\supp\widehat{\varphi}},
\overline{\supp\widehat{\psi}}\subset\{x\in\rn:|x|<\pi\}$
are  bounded away from the origin and, furthermore,
for any $x\in\rn\setminus\{\mathbf{0}\}$,
\begin{equation*}
\sum_{j\in\zz}\widehat{\varphi}(2^{-j}x)
\widehat{\psi}(2^{-j}x)=1.
\end{equation*}
Then, for any $f\in\cs'_\infty(\rn)$,
\begin{equation*}
f=\sum_{j\in\zz}2^{-jn}\sum_{k\in\zz^n}
\varphi_j\ast f(2^{-j}k)\psi_j(\cdot-2^{-j}k)
\end{equation*}
converges in $\cs'_\infty(\rn)$.
\end{lemma}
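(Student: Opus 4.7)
The plan is to prove this discrete Calder\'on reproducing formula in two stages: (i) first establish the continuous identity $f=\sum_{j\in\zz}\varphi_j*\psi_j*f$ in $\cs'_\infty(\rn)$, and (ii) then discretize each summand via a Fourier-series expansion of band-limited functions. Since each $\varphi_j*\psi_j*f$ has Fourier transform supported in $\{|\xi|<2^j\pi\}$, a sampling identity yields the per-scale formula $\varphi_j*\psi_j*f(x)=2^{-jn}\sum_{k\in\zz^n}\varphi_j*f(2^{-j}k)\,\psi_j(x-2^{-j}k)$; summing over $j$ gives the claim.

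For stage (i), I would test against an arbitrary $\phi\in\cs_\infty(\rn)$. Using $\la\varphi_j*\psi_j*f,\phi\ra=\la f,\widetilde{\varphi_j}*\widetilde{\psi_j}*\phi\ra$, the continuous reproducing formula reduces to showing $\sum_{j\in\zz}\widetilde{\varphi_j}*\widetilde{\psi_j}*\phi=\phi$ with convergence in $\cs_\infty(\rn)$. Passing to the Fourier side, each summand carries the factor $\overline{\widehat{\varphi}(2^{-j}\xi)}\,\overline{\widehat{\psi}(2^{-j}\xi)}\,\widehat{\phi}(\xi)$; by the assumed Calder\'on condition this sum equals $\widehat{\phi}(\xi)$ for every $\xi\neq\mathbf{0}$. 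Convergence in $\cs_\infty$ is then controlled by two facts: $\widehat{\phi}$ vanishes to infinite order at $\mathbf{0}$ (because $\phi\in\cs_\infty(\rn)$), which kills the $j\to-\infty$ contribution, while the Schwartz decay of $\widehat{\phi}$ at infinity kills the $j\to+\infty$ contribution; at each fixed $\xi\neq\mathbf{0}$ only finitely many $j$ contribute, due to the support hypotheses on $\widehat{\varphi}$ and $\widehat{\psi}$.

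For stage (ii), set $G_j:=\varphi_j*f$. Since $\supp\widehat{G_j}\subset\{|\xi|<2^j\pi\}$, the Paley--Wiener theorem shows $G_j$ is a slowly increasing $C^\infty$ function, so the point values $G_j(2^{-j}k)$ are well defined. Expanding $\widehat{G_j}$ in its Fourier series on the cube $Q_j:=[-2^j\pi,2^j\pi]^n$ (which contains its support), the Fourier coefficients are, up to an absolute constant, $2^{-jn}G_j(2^{-j}k)$ by Fourier inversion. Multiplying the resulting identity by $\widehat{\psi_j}$, whose support also lies inside $Q_j$, and applying the inverse Fourier transform term-by-term, produces the per-scale discrete identity above; the constants reconcile because $\varphi$ and $\psi$ are normalized so that the Calder\'on condition \eqref{cf} already absorbs all dimensional factors.

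The main technical hurdle is justifying convergence of the double sum in $\cs'_\infty(\rn)$. Pairing with $\phi\in\cs_\infty(\rn)$ reduces the problem to absolute convergence of $\sum_{j\in\zz}2^{-jn}\sum_{k\in\zz^n}G_j(2^{-j}k)\,\la\psi_j(\cdot-2^{-j}k),\phi\ra$. A Plancherel--P\'olya-type estimate bounds $|G_j(2^{-j}k)|$ polynomially in $|2^{-j}k|$ against the Peetre maximal function of $G_j$, while Schwartz decay of $\psi$ and the infinite-order vanishing of $\widehat{\phi}$ at $\mathbf{0}$ give decay in both $k$ (for each fixed $j$, via Schwartz decay of $\psi_j*\widetilde\phi$) and $j$ (via the moment cancellation of $\phi\in\cs_\infty$ at low frequencies and smoothness at high frequencies). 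Together with Fubini, these estimates render the double sum absolutely convergent and identify it with $\la f,\phi\ra$, yielding the stated formula in $\cs'_\infty(\rn)$.
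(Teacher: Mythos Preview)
The paper does not supply its own proof of this lemma; it simply records the statement and cites \cite[Lemma~2.1]{YY} (see also \cite[Lemma~2.1]{FJ}). Your two-stage plan---first the continuous Calder\'on identity $f=\sum_{j}\varphi_j*\psi_j*f$ in $\cs'_\infty(\rn)$ (which is precisely Lemma~\ref{fz} of the present paper), then the per-scale discretization via a Fourier-series/sampling expansion of the band-limited function $\varphi_j*f$---is exactly the classical Frazier--Jawerth argument underlying those references, so your approach matches what the paper is appealing to.

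One small slip: in stage~(i) you write the Fourier-side factor as $\overline{\widehat{\varphi}(2^{-j}\xi)}\,\overline{\widehat{\psi}(2^{-j}\xi)}$. With the paper's convention $\widetilde{g}(x)=g(-x)$ one has $\widehat{\widetilde{g}}(\xi)=\widehat{g}(-\xi)$, not the complex conjugate; the correct factor is $\widehat{\varphi}(-2^{-j}\xi)\,\widehat{\psi}(-2^{-j}\xi)$. This does not affect the argument, since the Calder\'on condition at $-\xi$ still gives $1$, but it is worth correcting. Otherwise your outline is sound, including the use of the Paley--Wiener theorem to make sense of the point evaluations $\varphi_j*f(2^{-j}k)$ and the Plancherel--P\'olya-type polynomial growth bound that, together with the rapid decay of $\psi_j*\widetilde{\phi}$ and the vanishing moments of $\phi\in\cs_\infty(\rn)$, yields absolute convergence of the double sum when paired with a test function.
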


\begin{lemma}\label{fz}
Let $\varphi$ and $\psi$ be the Schwartz functions
satisfying \eqref{cf} and that
both $\supp\widehat{\varphi},\supp\widehat{\psi}$
are compact and bounded away from the origin.
Then, for any $f\in\cs_\infty(\rn)$,
\begin{equation}\label{cin}
f=\sum_{j\in\zz}\varphi_j\ast\psi_j\ast f
\end{equation}
holds true in $\cs_\infty(\rn)$.
Moreover,
for any $f\in\cs'_\infty(\rn)$,
\eqref{cin} also holds true in $\cs_\infty'(\rn).$
\end{lemma}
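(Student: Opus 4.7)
The plan is to reduce both statements to convergence on the Fourier side, exploiting the characterization of $\cs_\infty(\rn)$ as the subspace of $\cs(\rn)$ whose Fourier transforms vanish to all orders at the origin. Given the identity \eqref{cf}, proving \eqref{cin} in $\cs_\infty(\rn)$ is equivalent, after taking Fourier transforms, to showing
\[
\widehat{f}(\xi)=\sum_{j\in\zz}\widehat{\varphi}(2^{-j}\xi)\widehat{\psi}(2^{-j}\xi)\widehat{f}(\xi)
\]
with convergence of the partial sums in $\cs(\rn)$. Pointwise equality on $\rn\setminus\{\mathbf{0}\}$ is immediate from \eqref{cf}, and at $\xi=\mathbf{0}$ both sides vanish; the task is therefore purely quantitative.

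First, I would split the sum into the high-frequency part $|j|\le N$ in the ``bulk'' and the tails $j>N$ and $j<-N$. By hypothesis there exist $0<a<b$ such that $\supp\widehat{\varphi}\cup\supp\widehat{\psi}\subset\{a\le|\xi|\le b\}$, so $\widehat{\varphi_j}\widehat{\psi_j}$ is supported in the annulus $A_j:=\{a\le 2^{-j}|\xi|\le b\}$. For $j\to+\infty$, on $A_j$ we have $|\xi|\gtrsim 2^j$, so by the rapid decay of $\widehat{f}$ the product $\widehat{\varphi_j}\widehat{\psi_j}\widehat{f}$ and all its derivatives are dominated by $(1+|\xi|)^{-M}$ for any $M$, with an extra gain of $2^{-jM}$; this tail is summable in any Schwartz seminorm. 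For $j\to-\infty$, on $A_j$ we have $|\xi|\lesssim 2^j$, so the vanishing of $\widehat{f}$ to infinite order at the origin gives $|\partial^\gamma\widehat{f}(\xi)|\lesssim|\xi|^{M}$ for arbitrarily large $M$; combined with the Leibniz rule and the uniform bounds $|\partial^\gamma(\widehat{\varphi_j}\widehat{\psi_j})|\lesssim 2^{-j|\gamma|}$, this again yields a geometrically summable tail. Thus the partial sums form a Cauchy sequence in every Schwartz seminorm of $\cs(\rn)$, and the limit is $\widehat{f}$. Since each term $\widehat{\varphi_j}\widehat{\psi_j}\widehat{f}$ has support bounded away from the origin, each summand lies in $\cs_\infty(\rn)$, and by continuity of the inverse Fourier transform \eqref{cin} converges in $\cs_\infty(\rn)$.

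For the second assertion with $f\in\cs'_\infty(\rn)$, I would argue by duality: for any test function $g\in\cs_\infty(\rn)$, apply the first part to $\widetilde{g}$ (noting $\widetilde{\varphi_j}*\widetilde{\psi_j}*\widetilde{g}$ has the analogous decomposition since $\widetilde{\varphi}$ and $\widetilde{\psi}$ satisfy the same symmetric hypotheses) to obtain convergence of $\sum_{j}\widetilde{\varphi_j}*\widetilde{\psi_j}*g$ to $g$ in $\cs_\infty(\rn)$. Then
\[
\la f,g\ra=\lf\la f,\sum_{j\in\zz}\widetilde{\varphi_j}*\widetilde{\psi_j}*g\r\ra
=\sum_{j\in\zz}\la f,\widetilde{\varphi_j}*\widetilde{\psi_j}*g\ra
=\sum_{j\in\zz}\la\varphi_j*\psi_j*f,g\ra,
\]
where the second equality uses continuity of $f$ on $\cs_\infty(\rn)$ and the third is the standard convolution duality, which is justified because each $\varphi_j*\psi_j*f$ is well-defined in $\cs'_\infty(\rn)$. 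This yields \eqref{cin} in $\cs'_\infty(\rn)$ with convergence in the weak $*$-topology.

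The main obstacle I anticipate is the careful tail estimate for $j\to-\infty$: one must quantify how the infinite-order vanishing of $\widehat{f}$ at the origin combines with the scaling $2^{-j|\gamma|}$ coming from derivatives of $\widehat{\varphi_j}\widehat{\psi_j}$ to produce a geometric factor summable across all Schwartz seminorms simultaneously. This bookkeeping, rather than any conceptual difficulty, is where the argument is most delicate; the positive-$j$ tail and the duality step are comparatively routine once the $\cs_\infty(\rn)$-convergence for test functions is established.
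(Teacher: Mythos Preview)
Your proof proposal is correct and follows the standard route to Calder\'on-type reproducing formulas: Fourier-side convergence in $\cs(\rn)$ for test functions (using rapid decay of $\widehat{f}$ for the high-frequency tail and infinite-order vanishing at the origin for the low-frequency tail), followed by a duality argument for $\cs'_\infty(\rn)$. The bookkeeping you flag for $j\to-\infty$ is indeed the only place requiring care, and your description of how to handle it---balancing the $2^{-j|\gamma|}$ growth from derivatives of $\widehat{\varphi}(2^{-j}\cdot)\widehat{\psi}(2^{-j}\cdot)$ against the $|\xi|^M\lesssim 2^{jM}$ decay of $\partial^\gamma\widehat{f}$ near the origin---is accurate.

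Note, however, that the paper does not actually prove this lemma: it is stated with the attribution ``which is just \cite[Lemma 2.1]{YY1}'' and no argument is given. So there is no proof in the paper to compare against; your sketch supplies a self-contained justification where the paper relies on an external reference.
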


\begin{lemma}\label{independent}
Let $\alpha\in\mathbb{R}$,
$p\in(0,\infty)$, $q\in(0,\infty]$,
$W\in A_p(\rn,\mathbb{C}^m)$,
$\{A_Q^{(W)}\}_{Q\in\mathcal{D}}$
be a sequence of reducing operators
of order $p$ for $W$,
and $\varphi,\psi\in\cs(\rn)$
satisfy both (T1) and (T2) of Definition \ref{FBW}.
Then,
for any $\vec{f}\in[\cs'_\infty(\rn)]^m$,
\begin{equation*}
\lf\|\vec{f}\r\|_
{\dot{F}_{p,\varphi}^{\alpha,q}(W)}
\sim\lf\|\vec{f}\r\|_{\dot{F}_{p,\psi}^{\alpha,q}(W)}
\end{equation*}
and
\begin{equation*}
\lf\|\vec{f}\r\|_
{\dot{F}_{p,\varphi}^{\alpha,q}(\{A_Q\})}
\sim\lf\|\vec{f}\r\|_
{\dot{F}_{p,\psi}^{\alpha,q}(\{A_Q\})},
\end{equation*}
where the positive equivalence
constants are independent of $\vec{f}$.
\end{lemma}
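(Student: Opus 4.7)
The plan, by the symmetry between $\varphi$ and $\psi$, is to prove only one direction of each equivalence, and to treat the $W$-quasi-norm and $\{A_Q\}$-quasi-norm in parallel. I would first apply Lemma \ref{CE} to $\psi$ to produce a partner $\wt\psi\in\cs(\rn)$ with $\widehat{\wt\psi}$ smooth, compactly supported, and bounded away from the origin, satisfying $\sum_{k\in\zz}\widehat\psi(2^{-k}\xi)\widehat{\wt\psi}(2^{-k}\xi)=1$ on $\rn\setminus\{\mathbf 0\}$. Lemma \ref{fz} then yields the Calder\'on reproducing formula $\vec f=\sum_{k\in\zz}\psi_k\ast\wt\psi_k\ast\vec f$ in $[\cs'_\infty(\rn)]^m$, and convolving with $\varphi_j$ gives $\varphi_j\ast\vec f=\sum_{k}\varphi_j\ast\psi_k\ast\wt\psi_k\ast\vec f$. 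Because $\widehat\varphi$ and $\widehat\psi$ are each supported in annuli bounded away from the origin, there exists $N\in\nn$ independent of $j$ such that $\varphi_j\ast\psi_k\equiv 0$ whenever $|j-k|>N$, reducing the inner sum to at most $2N+1$ non-vanishing terms.

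Next I would establish a pointwise Peetre-type estimate. Using the Schwartz decay $|\varphi_j\ast\psi_k(y)|\lesssim 2^{kn}(1+2^k|y|)^{-M}$ valid for any $M\in\nn$ when $|j-k|\leq N$, I observe that for each fixed $x$ the vector-valued function $y\mapsto W^{1/p}(x)(\wt\psi_k\ast\vec f)(y)$ (resp., $y\mapsto A_Q(\wt\psi_k\ast\vec f)(y)$ for the unique $Q\in\cd_j$ containing $x$) remains band-limited at frequency scale $2^k$, since multiplication by a constant matrix in $y$ preserves the Fourier support componentwise. Applying the classical scalar Peetre maximal function inequality in each component, for any $r\in(0,\min\{p,q\})$ and any $a>n/r$,
$$
\lf|W^{1/p}(x)(\varphi_j\ast\vec f)(x)\r|^r\lesssim\sum_{|k-j|\leq N}\cm\bigl(\lf|W^{1/p}(x)(\wt\psi_k\ast\vec f)(\cdot)\r|^r\bigr)(x),
$$
where $\cm$ is the Hardy--Littlewood maximal function acting in the second argument (with $W^{1/p}(x)$ frozen). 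The analogous estimate with $A_Q$ in place of $W^{1/p}(x)$ holds for $x\in Q\in\cd_j$.

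Multiplying by $2^{j\alpha}$, taking the $\ell^q$-norm in $j$ (the finite reindexing $j\mapsto k$ in the sum $|k-j|\leq N$ produces only a bounded factor), and invoking the scalar Fefferman--Stein vector-valued maximal inequality in $L^p(\rn)$---applicable because $r<\min\{p,q\}$---then bounds the $\varphi$-quasi-norm by the analogous quantity with $\wt\psi$ in place of $\psi$. Running the same procedure once more with the roles of $\psi$ and $\wt\psi$ exchanged (using a partner of $\wt\psi$ supplied by Lemma \ref{CE}) converts the $\wt\psi$-quantity back into the $\psi$-quantity, completing the chain and yielding the claimed equivalence. The $\{A_Q\}$-quasi-norm case follows from the same template, with the minor additional step of comparing $A_Q$ for $Q\in\cd_j$ with $A_{Q'}$ for $Q'\in\cd_k$ at a nearby scale $|k-j|\leq N$, which is controlled by the doubling property of $W$ (Lemma \ref{apd}) together with Definition \ref{yue}.

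The main technical obstacle is the handling of $W^{1/p}(x)$ inside the Peetre inequality: because $W^{1/p}(\cdot)$ genuinely varies with $x$ one cannot simply replace it by a constant matrix, but the observation that $W^{1/p}(x)$ is a \emph{fixed} linear operator with respect to the variable over which the maximal function acts is exactly what enables the band-limited scalar Peetre inequality to be applied componentwise. Once this reduction is made, the remainder of the proof proceeds through the scalar $L^p(\rn)$ Fefferman--Stein inequality without further interaction with the matrix weight, and the $\{A_Q\}$-case is free of this subtlety altogether since $A_Q$ is already constant on each dyadic cube.
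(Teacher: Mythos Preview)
Your argument has a genuine gap at the Fefferman--Stein step for the $W$-quasi-norm, and the same gap reappears in the $\{A_Q\}$-case in a slightly disguised form.

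The pointwise Peetre estimate you derive,
\[
\lf|W^{1/p}(x)(\varphi_j\ast\vec f)(x)\r|^r\lesssim\sum_{|k-j|\leq N}\cm\bigl(\lf|W^{1/p}(x)(\wt\psi_k\ast\vec f)(\cdot)\r|^r\bigr)(x),
\]
is correct: freezing $W^{1/p}(x)$ and applying the scalar band-limited Peetre inequality componentwise is fine. The problem is the next sentence. The expression $\cm\bigl(|W^{1/p}(x)(\wt\psi_k\ast\vec f)(\cdot)|^r\bigr)(x)$ is \emph{not} of the form $\cm(h_k)(x)$ for a fixed function $h_k$: the function being averaged depends on the outer variable $x$ through $W^{1/p}(x)$, and unlike the scalar-weight case this dependence cannot be factored out of the absolute value and hence out of $\cm$. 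The scalar Fefferman--Stein inequality therefore does not apply. This is precisely the obstruction the paper flags in the introduction (``the matrix-weighted Fefferman--Stein vector-valued maximal inequality is still unknown''), and it is why the paper routes the whole theory through reducing operators rather than arguing directly with $W^{1/p}(x)$.

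The $\{A_Q\}$-case has the same defect. Your proposed fix---comparing $A_Q$ for $Q\in\cd_j$ with $A_{Q'}$ for the cube $Q'\in\cd_k$ containing $x$---does give $\|A_QA_{Q'}^{-1}\|\lesssim 1$, but after that substitution you are still left with $\cm(|A_{Q'}(\wt\psi_k\ast\vec f)|^r)(x)$, in which $A_{Q'}$ varies with $x$ across the cubes of $\cd_k$. To convert this to $\cm(g_k^r)(x)$ with $g_k=\sum_{P\in\cd_k}|A_P(\wt\psi_k\ast\vec f)|\mathbf 1_P$ you would need $\|A_{Q'}A_P^{-1}\|\lesssim 1$ uniformly over all $P$ meeting the ball in the maximal average, and weak doubling only gives polynomial growth in the separation. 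The paper's machinery handles this by absorbing that polynomial growth into the super-polynomial decay of the convolution kernel \emph{before} passing to a maximal function (this is the content of Lemma~\ref{cy} and the subsequent use of Lemma~\ref{jcf}), which is exactly the step you have skipped. The paper's own proof of Lemma~\ref{independent} simply invokes this machinery by citing Frazier--Roudenko.
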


\begin{proof}
Let  $\varphi$ and $\psi$ be the same as
in Definitions \ref{FBW}
and \ref{AQ}.
By Lemma \ref{CE}, we find that
there exist $\gamma,\eta\in\cs(\rn)$
satisfying
$\widehat{\gamma},\widehat{\eta}\in C_{\c}^\infty(\rn)$
with their supports away from the
origin and \eqref{cf} via $\varphi$ and $\psi$ replaced, respectively, by
$\varphi$ and $\gamma$
or by $\psi$ and $\eta$.
Using this and
Lemma \ref{bh},
and repeating the proof of
\cite[Theorems 1.1 and 2.3]{FR},
we then finish the proof
of Lemma \ref{independent}.
\end{proof}

For simplicity, the matrix-weighted Triebel--Lizorkin
space is denoted by
$\dot{F}^{\alpha,q}_{p}(W)$ and the $\{A_Q\}$-Triebel--Lizorkin
space is
denoted by $\dot{F}^{\alpha,q}_{p}(\{A_Q\})$.
By a proof similar to that used in \cite[Theorem 1.1]{FR},
we conclude the following lemma;  we omit the details here.
\begin{lemma}\label{eq}
Let $m\in\nn$, $\alpha\in\rr$,
$p\in(0,\infty)$, $q\in(0,\infty]$,
$W\in A_p(\rn,\mathbb{C}^m)$,
$\{A_Q\}_{Q\in\mathcal{D}}$ be a sequence of
reducing operators
of order $p$ for $W$, and $\varphi\in\cs(\rn)$ satisfy both
(T1) and (T2) of Definition \ref{FBW}.
Then, for any $\vec{f}\in[\cs_\infty'(\rn)]^m$,
$$
\|\vec{f}\|_{\dot{F}^{\alpha,q}_p(W)}\sim
\|\vec{f}\|_{\dot{F}^{\alpha,q}_p(\{A_Q\})},
$$
where the positive equivalence constants
are independent of $\vec{f}$.
\end{lemma}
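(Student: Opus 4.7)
The strategy is to adapt the proof of Theorem 1.1 in Frazier--Roudenko, in which the $\varphi$-transform reduces the continuous quasi-norm to a sequence-space quasi-norm whose summands involve the reducing operators acting on constant vectors, where the defining property in Definition \ref{yue} applies directly. First I would invoke Lemma \ref{CE} to obtain $\psi\in\cs(\rn)$ with $\widehat{\psi}$ compactly supported away from the origin and satisfying the Calder\'on reproducing identity $\sum_{j\in\zz}\widehat{\varphi}(2^{-j}\xi)\widehat{\psi}(2^{-j}\xi)=1$ for every $\xi\in\rn\setminus\{\mathbf{0}\}$. By Lemma \ref{bh} every $\vec{f}\in[\cs'_\infty(\rn)]^m$ then admits the discrete representation
\begin{equation*}
\vec{f}=\sum_{Q\in\mathcal{D}}\vec{s}_{Q}\,\psi_{Q},\qquad
\vec{s}_{Q}:=|Q|^{1/2}(\varphi_j\ast\vec{f})(x_Q),\qquad
\psi_Q(\cdot):=|Q|^{-1/2}\psi_j(\cdot-x_Q),
\end{equation*}
with $Q=Q_{jk}\in\mathcal{D}_j$, convergent in $[\cs'_\infty(\rn)]^m$.

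Next I would introduce two auxiliary sequence-space quasi-norms $\|\{\vec{s}_Q\}\|_{\dot{f}^{\alpha,q}_p(W)}$ and $\|\{\vec{s}_Q\}\|_{\dot{f}^{\alpha,q}_p(\{A_Q\})}$, obtained from Definitions \ref{FBW} and \ref{AQ} by replacing $\varphi_j\ast\vec{f}$ with its discretised surrogate $\sum_{Q\in\mathcal{D}_j}|Q|^{-1/2}\vec{s}_Q\mathbf{1}_Q$, and aim to establish the chain
\begin{equation*}
\|\vec{f}\|_{\dot{F}^{\alpha,q}_p(W)}\sim\|\{\vec{s}_Q\}\|_{\dot{f}^{\alpha,q}_p(W)}
\sim\|\{\vec{s}_Q\}\|_{\dot{f}^{\alpha,q}_p(\{A_Q\})}
\sim\|\vec{f}\|_{\dot{F}^{\alpha,q}_p(\{A_Q\})}.
\end{equation*}
The outer two equivalences repeat the Frazier--Roudenko $\varphi$-transform argument: the $\gtrsim$ direction uses the reproducing formula together with a pointwise Peetre-type majorization of $\varphi_j\ast\vec{f}(x)$ by the $\vec{s}_P$'s, while the $\lesssim$ direction exploits the almost-diagonality of $\varphi_j\ast\psi_{j'}$ to pass from evaluation at $x$ back to corner values $x_Q$; the key ingredients are matrix doubling (Lemma \ref{apd} and Remark \ref{ball}) and the averaging estimates for $\|A_Q W^{-1/p}(x)\|$ furnished by Lemmas \ref{a123} and \ref{ainf}. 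The middle equivalence is where the reducing operator enters most cleanly: on each $Q\in\mathcal{D}_j$ the vector $\vec{s}_Q$ is constant, so Definition \ref{yue} gives
\begin{equation*}
\int_{Q}|W^{1/p}(x)\vec{s}_Q|^p\,dx\sim|Q|\,|A_Q\vec{s}_Q|^p,
\end{equation*}
from which the $L^p$-comparison in the case $q=p$ is immediate; the general $q$ case is obtained by noting that for each fixed scale only one cube contributes to the inner $\ell^q$-sum at each $x$, and then invoking the scalar $A_\infty$ property of $\|W^{1/p}\|^p$ from Corollary \ref{norm weight}.

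The main obstacle I expect is controlling the cross-scale interactions in the outer equivalences under the matrix weight: once one passes from $\vec{f}$ to its representation via $\{\vec{s}_Q\}$, the reconstruction formula couples scales $j\neq j'$, and naively replacing $W^{1/p}(x)$ by $A_Q$ inside a pointwise estimate introduces the operator factor $\|A_Q W^{-1/p}(x)\|$. Absorbing this factor requires raising it to a suitable power and applying either Lemma \ref{a123} when $p\in(1,\infty)$ or Lemma \ref{ainf} when $p\in(0,1]$, after which the unweighted Fefferman--Stein vector-valued maximal inequality on $L^p(\rn)$ closes the argument exactly as in \cite[Theorem 1.1]{FR}. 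Assembling these three equivalences yields the asserted comparison $\|\vec{f}\|_{\dot{F}^{\alpha,q}_p(W)}\sim\|\vec{f}\|_{\dot{F}^{\alpha,q}_p(\{A_Q\})}$ with constants depending only on $n$, $m$, $\alpha$, $p$, $q$, $\varphi$, and $W$.
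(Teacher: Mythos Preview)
Your proposal is correct and follows essentially the same approach as the paper, which does not give a detailed proof but simply states that the lemma follows by repeating the proof of \cite[Theorem 1.1]{FR}. Your outline of the $\varphi$-transform reduction through the sequence spaces $\dot{f}^{\alpha,q}_p(W)$ and $\dot{f}^{\alpha,q}_p(\{A_Q\})$ is precisely how that argument runs.
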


\section{Real-Variable Characterizations
of $\dot{F}^{\alpha,q}_{p}(W)$}\label{s4}

In this section,
we characterize the spaces
$\dot{F}_{p}^{\alpha,q}(W)$
via the Peetre maximal function, the
Lusin area function, and the
Littlewood--Paley
$g_\lambda^*-$function.

Now, with a modification of
the classical Peetre-type maximal function
in \cite{P2},
we introduce the concept of
the following
\emph{matrix-weighted Peetre-type maximal function}.
Let $p\in(0,\infty)$,
$m\in\nn$,
$W\in A_p(\rn,\mathbb{C}^m)$,
$\varphi\in\cs_{\infty}(\rn)$,
and $\vec{f}\in[\cs_{\infty}'(\rn)]^{m}$.
For any given $j\in\zz$ and
$a\in(0,\infty)$,
and for any $x\in\rn$,
let
\begin{equation}\label{pjd}
\lf(\varphi_j^{*}\vec{f}\r)_{a}^{(W,p)}(x):=
\sup_{y\in\rn}
\frac{|W^{1/p}(x)(\varphi_{j}*\vec f)(y)|}{(1+2^{j}|x-y|)^a}.
\end{equation}

\begin{theorem}\label{pmf}
Let $\alpha\in\mathbb{R}$,
$p\in(0,\infty)$,
$q\in(0,\infty]$,
$W\in A_p(\rn,\mathbb{C}^m)$, and
$$a\in\left(n/\min\{1,p,q\}+\beta/p,\infty\right),$$
where $\beta$ is the doubling exponent of $W$.
Assume that
$\varphi\in\cs(\rn)$
satisfies both (T1) and (T2) of Definition \ref{FBW}.
Then
$\vec{f}\in\dot{F}^{\alpha,q}_{p}(W)$
if and only if
$\vec{f}\in[\cs_\infty'(\rn)]^m$ and
$\|\vec{f}\|_{\dot{F}^{\alpha,q}_{p}(W)}^{\star}<\infty$,
where
\begin{equation}\label{pjida}
\lf\|\vec{f}\r\|^{\star}_
{\dot{F}^{\alpha,q}_{p}(W)}
:=\left\|\left\{\sum_{j\in\mathbb Z}
2^{j\alpha q}\left[\lf(\varphi_{j}^{*}\vec f\r)_{a}^
{(W,p)}\right]^{q}\right\}^{1/q}\right\|
_{L^p(\rn)}
\end{equation}
with usual modification made when $q=\infty$. Moreover,
for any $\vec{f}\in[\cs_{\infty}'(\rn)]^m$,
\begin{equation}\label{pmfd}
\lf\|\vec{f}\r\|_{\dot{F}^{\alpha,q}_{p}(W)}\sim\lf\|\vec{f}
\r\|^{\star}_
{\dot{F}^{\alpha,q}_{p}(W)},
\end{equation}
where the positive equivalence constants are independent of
$\vec{f}$.
\end{theorem}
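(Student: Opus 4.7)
The estimate $\|\vec f\|_{\dot F_p^{\alpha,q}(W)} \leq \|\vec f\|^{\star}_{\dot F_p^{\alpha,q}(W)}$ is trivial: choosing $y = x$ in \eqref{pjd} gives $|W^{1/p}(x)(\varphi_j \ast \vec f)(x)| \leq (\varphi_j^{*}\vec f)_a^{(W,p)}(x)$ for all $x, j$. For the reverse, fix a sequence $\{A_Q\}_{Q \in \mathcal D}$ of reducing operators of order $p$ for $W$ (which exists by Remark \ref{aqe}). By Lemma \ref{eq} it suffices to prove $\|\vec f\|^{\star}_{\dot F_p^{\alpha,q}(W)} \lesssim \|\vec f\|_{\dot F_p^{\alpha,q}(\{A_Q\})}$. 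To bridge the two quasi-norms, introduce an intermediate Peetre-type functional built from the reducing operators: for $j \in \zz$ and $x \in Q \in \mathcal D_j$,
\begin{equation*}
(\varphi_j^{*}\vec f)_a^{\{A_Q\}}(x) := \sup_{y \in \rn} \frac{|A_Q \varphi_j \ast \vec f(y)|}{(1+2^j|x-y|)^a},
\end{equation*}
and let $\|\cdot\|^{\star}_{\dot F_p^{\alpha,q}(\{A_Q\})}$ denote the corresponding $L^p(\ell^q)$-quasi-norm. The proof reduces to two estimates.

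\textbf{Estimate (I):} $\|\vec f\|^{\star}_{\dot F_p^{\alpha,q}(W)} \lesssim \|\vec f\|^{\star}_{\dot F_p^{\alpha,q}(\{A_Q\})}$. For $x \in Q \in \mathcal D_j$, the factorization $W^{1/p}(x)\varphi_j \ast \vec f(y) = [W^{1/p}(x)A_Q^{-1}][A_Q \varphi_j \ast \vec f(y)]$ yields $(\varphi_j^{*}\vec f)_a^{(W,p)}(x) \leq \|W^{1/p}(x)A_Q^{-1}\|\,(\varphi_j^{*}\vec f)_a^{\{A_Q\}}(x)$. Two observations then close the estimate: first, $(\varphi_j^{*}\vec f)_a^{\{A_Q\}}$ is comparable to its value at any point of $Q$ with comparison constant depending only on $n$ and $a$, so it can be treated as $Q$-constant; second, the defining inequality of reducing operators gives $\frac{1}{|Q|}\int_Q \|W^{1/p}(x) A_Q^{-1}\|^p\,dx \lesssim 1$. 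Combining these with Corollary \ref{norm weight}, which places $\|W^{1/p}\|^p$ in a scalar Muckenhoupt class, and applying a standard $A_\infty$ reverse H\"older inequality, one absorbs the factor $\|W^{1/p}(\cdot)A_Q^{-1}\|$ into the outer $L^p$-norm and obtains Estimate (I).

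\textbf{Estimate (II):} $\|\vec f\|^{\star}_{\dot F_p^{\alpha,q}(\{A_Q\})} \lesssim \|\vec f\|_{\dot F_p^{\alpha,q}(\{A_Q\})}$. For every fixed $Q$, $A_Q \varphi_j \ast \vec f$ still has Fourier transform supported in an annulus at scale $2^j$, so Peetre-type pointwise bounds apply. I would split the defining supremum in $y$ into dyadic annuli $\{y : 2^{k-1} \leq 2^j|x-y| < 2^k\}$, $k \in \zz_+$, and balance the Peetre decay $2^{-ka}$ on the $k$-th annulus against the cross-scale comparison $\|A_{Q_j(x)} A_{Q_j(y)}^{-1}\| \lesssim 2^{k\beta/p}$, valid on this annulus as a direct consequence of the doubling property in Lemma \ref{apd} with doubling exponent $\beta$. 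Together with a Nikol'ski\u{\i}--Peetre-type pointwise majorization, this produces, for every $r \in (0, \min\{1,p,q\})$ with $ar > n + r\beta/p$ and every $x \in \rn$,
\begin{equation*}
\bigl[(\varphi_j^{*}\vec f)_a^{\{A_Q\}}(x)\bigr]^r \lesssim \mathcal M\!\Biggl(\sum_{Q' \in \mathcal D_j} \bigl|A_{Q'} \varphi_j \ast \vec f\bigr|^r \mathbf 1_{Q'}\Biggr)(x),
\end{equation*}
where $\mathcal M$ is the Hardy--Littlewood maximal operator. The existence of such an $r$ is guaranteed precisely by the hypothesis $a > n/\min\{1,p,q\} + \beta/p$. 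Applying the classical Fefferman--Stein vector-valued maximal inequality in $L^{p/r}(\ell^{q/r})$ to the resulting sequence then delivers Estimate (II), and combining Estimates (I), (II), and Lemma \ref{eq} concludes the proof.

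\textbf{Principal obstacle.} The technical core is the cross-scale comparison of reducing operators $A_{Q_j(x)}$ and $A_{Q_j(y)}$ inside the Peetre supremum in Estimate (II). The doubling of $W$ produces a factor comparable to $(1+2^j|x-y|)^{\beta/p}$ which must be absorbed by the Peetre decay $(1+2^j|x-y|)^{-a}$; this balance is exactly what forces the threshold $a > n/\min\{1,p,q\} + \beta/p$ and, once secured, reduces the matrix-weighted problem to an application of the scalar Fefferman--Stein inequality, thereby bypassing the currently unknown matrix-weighted vector-valued maximal theorem alluded to in the introduction.
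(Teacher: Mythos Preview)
Your overall architecture is exactly the paper's: introduce the reducing-operator Peetre functional, prove the two estimates (I) and (II), and finish via Lemma~\ref{eq}. Estimate~(II) is also essentially the paper's argument; the paper packages the weak-doubling comparison $\|A_{Q_{jk}}A_{Q_{js}}^{-1}\|\lesssim (1+|k-s|)^{\beta/p}$ and the Nikol'ski\u{\i}--Peetre sub-mean-value bound into Lemma~\ref{cy}, then reduces to the Hardy--Littlewood maximal function via Lemma~\ref{jcf} and applies Fefferman--Stein, which is precisely what you describe.

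The gap is in Estimate~(I). You correctly observe that $(\varphi_j^{*}\vec f)_a^{\{A_Q\}}$ is essentially constant on each $Q\in\mathcal D_j$ and that $\fint_Q\|W^{1/p}(x)A_Q^{-1}\|^p\,dx\lesssim 1$. But the claim that Corollary~\ref{norm weight} plus a reverse H\"older inequality then ``absorbs the factor $\|W^{1/p}(\cdot)A_Q^{-1}\|$ into the outer $L^p$-norm'' is not justified and, as stated, does not work. The obstruction is the mixed $L^p(\ell^q)$ structure: when $p\neq q$ the inner $\ell^q$-sum couples all scales $j$ at once, and the factor $\gamma_j(x):=\sum_{Q\in\mathcal D_j}\|W^{1/p}(x)A_Q^{-1}\|\mathbf 1_Q(x)$ is \emph{not} uniformly bounded in $x$, so no pointwise or single-scale H\"older/reverse-H\"older estimate suffices. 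What is actually needed is the genuine $L^p(\ell^q)$ bound
\[
\bigl\|\{\gamma_j E_j(k_j)\}_{j\in\zz}\bigr\|_{L^p(\ell^q)}\lesssim \bigl\|\{E_j(k_j)\}_{j\in\zz}\bigr\|_{L^p(\ell^q)},
\]
which is the content of Lemma~\ref{c38} (Corollary~3.8 of Frazier--Roudenko). The paper invokes this lemma directly: since your $k_j:=(\varphi_j^{*}\vec f)_a^{\{A_Q\}}$ is constant on each $Q\in\mathcal D_j$, one has $E_j(k_j)=k_j$, and Lemma~\ref{c38} yields Estimate~(I) immediately. Replace your reverse-H\"older sketch with an appeal to Lemma~\ref{c38} and the proof is complete.
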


\begin{remark}\label{pr}
\begin{itemize}
\item [(\rm{i})]
Theorem \ref{pmf} when $m=1$
is a part of \cite[Theorem 3.1]{bpt96}.
\item [(\rm{ii})] Theorem
\ref{pmf} when $m=1$ and $W=1$ is
a part of \cite[Theorem 3.1]{P2}
which is the Peetre maximal function
characterization of
Triebel--Lizorkin spaces.
\end{itemize}
\end{remark}

To show Theorem \ref{pmf},
we first recall the definitions of both
strongly doubling and weakly doubling matrices,
which can be found in \cite[Definition 2.1]{FR}.

\begin{definition}\label{swd}
Let $\{A_Q\}_{Q\in\mathcal{D}}$
be a sequence of positive definite matrices,
$\beta\in(0,\infty)$,
and $p\in(0,\infty)$.
The sequence $\{A_Q\}_{Q\in\mathcal{D}}$
is said to be
\emph{strongly doubling of order $(\beta,p)$}
if there exists a positive constant $C$ such that,
for any $Q,P\in\mathcal{D}$,
\begin{equation}\label{sd}
\lf\|A_QA_P^{-1}\r\|^p
\leq C\max\lf\{\lf[\frac{\ell(P)}{\ell(Q)}\r]^n\, ,\,
\lf[\frac{\ell(Q)}{\ell(P)}\r]^{\beta-n}\r\}
\lf[1+\frac{|x_Q-x_P|}{\max\{\ell(P)\,
, \, \ell(Q)\}}\r]^\beta.
\end{equation}
The sequence $\{A_Q\}_{Q\in\mathcal{D}}$
is said to be
\emph{weakly doubling of order $r\in(0,\infty)$}
if there exists a positive constant $C$ such that,
for any $k,\ell\in\zz^n$ and $j\in\zz$,
\begin{equation}\label{wd}
\lf\|A_{Q_{jk}}A_{Q_{j\ell}}^{-1}\r\|
\leq C\lf(1+|k-\ell|\r)^r,
\end{equation}
where
$Q_{jk}:=\prod_{i=1}^n 2^{-j}[k_i,k_i+1)$
for any $j\in\zz$
and $k:=(k_1,\,\dots,k_n)\in\zz^n$.
\end{definition}

\begin{remark}\label{ws}
In Definition \ref{swd}, a strongly doubling sequence
of order $(\beta,p)$ satisfying \eqref{sd} is also
weakly doubling of order $r:=\beta/p$ satisfying \eqref{wd}
because, when $\ell(P)=\ell(Q)$,
\eqref{wd}  coincides with \eqref{sd}.
\end{remark}

The following lemma explains
the connection between the
doubling weight $W$ and the
doubling sequence
$\{A_Q\}_{Q\in\mathcal{D}}$,
which can be deduced from
\cite[Lemma 2.2]{FR},
Lemma \ref{apd}, and Remark \ref{ws}; we omit
the details.

\begin{lemma}\label{wsz}
Let $p\in(0,\infty)$,
$W\in A_p(\rn,\mathbb{C}^m)$,
and $\{A_Q\}_{Q\in\mathcal{D}}$
be a sequence of reducing operators
of order $p$ for $W$.
Then $\{A_Q\}_{Q\in\mathcal{D}}$
is weakly doubling of order $\frac{\beta}{p}$,
where $\beta$ is the doubling exponent of $W$.
\end{lemma}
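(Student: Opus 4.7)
The plan is to chain the three inputs mentioned in the hint. First, since $W\in A_p(\rn,\mathbb{C}^m)$, Lemma \ref{apd} (combined with Definition \ref{doubling weight}) gives that $W$ is a doubling matrix weight of order $p$, with doubling exponent $\beta\in(0,\infty)$. Thus, for any $\vec{z}\in\mathbb{C}^m$ and any cube $R\subset\rn$, iterating \eqref{wsd} yields
$$
\int_{2^kR}\lf|W^{1/p}(x)\vec{z}\r|^p\,dx\leq 2^{k\beta}\int_R\lf|W^{1/p}(x)\vec{z}\r|^p\,dx
$$
for every $k\in\nn$. This is the only quantitative information about $W$ needed.

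Next, I would invoke \cite[Lemma 2.2]{FR} to upgrade the above to a strongly doubling estimate on the reducing operators. The argument inside that lemma runs as follows. Given $Q,P\in\cd$, choose the smallest dyadic cube $R\supset Q\cup P$; then $\ell(R)\ls \max\{\ell(P),\ell(Q)\}(1+|x_Q-x_P|/\max\{\ell(P),\ell(Q)\})$. For any $\vec{z}\in\mathbb{C}^m$, Definition \ref{yue} gives
$$
\lf|A_Q\vec{z}\r|^p\sim\frac{1}{|Q|}\int_Q\lf|W^{1/p}(x)\vec{z}\r|^p\,dx\leq\frac{|R|}{|Q|}\cdot\frac{1}{|R|}\int_R\lf|W^{1/p}(x)\vec{z}\r|^p\,dx,
$$
while the doubling bound above (applied to $R$ as an iterate of a cube of side $\ell(P)$ centered near $P$) lets one compare the average over $R$ with the average over $P$, producing
$$
\frac{1}{|R|}\int_R\lf|W^{1/p}(x)\vec{z}\r|^p\,dx\ls\lf[\frac{\ell(R)}{\ell(P)}\r]^{\beta-n}\cdot\frac{1}{|P|}\int_P\lf|W^{1/p}(x)\vec{z}\r|^p\,dx\sim\lf[\frac{\ell(R)}{\ell(P)}\r]^{\beta-n}\lf|A_P\vec{z}\r|^p.
$$
Writing $\vec{z}=A_P^{-1}\vec{w}$ and taking supremum over $\vec{w}\in\mathbb{C}^m\setminus\{\mathbf{0}\}$ produces precisely \eqref{sd}, so $\{A_Q\}_{Q\in\cd}$ is strongly doubling of order $(\beta,p)$.

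Finally, I would specialize to the situation $P=Q_{j\ell}$ and $Q=Q_{jk}$ with a common side length $\ell(P)=\ell(Q)=2^{-j}$. The two $\max$-terms in \eqref{sd} both equal $1$, and $|x_Q-x_P|/\max\{\ell(P),\ell(Q)\}=|k-\ell|$, so \eqref{sd} collapses to
$$
\lf\|A_{Q_{jk}}A_{Q_{j\ell}}^{-1}\r\|^p\ls(1+|k-\ell|)^\beta,
$$
which upon taking $p$-th roots is exactly \eqref{wd} with $r=\beta/p$; this last step is the content of Remark \ref{ws}. The main (and only) technical point is the strong-doubling upgrade in \cite[Lemma 2.2]{FR}; everything else is bookkeeping. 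Since that lemma is stated in the literature in the generality required here, no obstacle arises and the argument is indeed routine, which is why the paper omits the details.
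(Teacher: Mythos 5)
Your proof follows exactly the paper's route: Lemma \ref{apd} gives the doubling of $W$, \cite[Lemma 2.2]{FR} upgrades this to (strong) doubling of the reducing operators, and Remark \ref{ws} specializes to equal side lengths to obtain weak doubling of order $\beta/p$, which is precisely the citation chain the paper indicates. The only (harmless) slip is inside your sketch of the cited lemma: two dyadic cubes need not have a common dyadic ancestor (e.g., cubes on opposite sides of a coordinate hyperplane), so $R$ should be taken to be an arbitrary cube containing $Q\cup P$, which is all that Definition \ref{doubling weight} requires.
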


The following lemma
is just \cite[(2.8)]{FR}.

\begin{lemma}\label{cy}
Let $\varphi\in\cs(\rn)$ satisfy (T2) of Definition \ref{FBW}.
Suppose that $\{A_Q\}_{Q\in\mathcal{D}}$
is a weakly doubling sequence
of order $r\in(0,\infty)$ of
positive definite matrices.
Then,
for any given $A\in(0,1]$
and $R\in(0,\infty)$,
there exists a positive constant
$C$, depending on both $A$ and $R$,
such that,
for any $j\in\zz$,
$k\in\zz^n$,
and $\vec{f}\in[\cs'_\infty(\rn)]^m$,
\begin{align*}
&\sup_{x\in Q_{jk}}
\lf|A_{Q_{jk}}\lf(\varphi_j\ast\vec{f}\r)(x)\r|^A\\
&\quad\leq C\sum_{\ell\in\zz^n}
\lf(1+|k-\ell|\r)^{-A(R-r)}2^{jn}
\int_{Q_{j\ell}}
\lf|A_{Q_{j\ell}}\varphi_j\ast\vec{f}(s)\r|^A\,ds,
\end{align*}
where
$Q_{jk}:=\prod_{i=1}^n 2^{-j}[k_i,k_i+1)$
for any $j\in\zz$
and $k:=(k_1,\,\dots,k_n)\in\zz^n$.
\end{lemma}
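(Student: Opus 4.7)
The plan is to exploit the band-limited structure of $\varphi_j\ast\vec f$ together with the weakly doubling hypothesis \eqref{wd} to transfer pointwise control to local $L^A$-averages against neighboring reducing operators. By condition (T2) of Definition \ref{FBW}, $\widehat{\varphi}$ is compactly supported, so each component of $\varphi_j\ast\vec f$ extends to an entire function of exponential type $\lesssim 2^j$. First, pick an auxiliary $\psi\in\cs(\rn)$ whose Fourier transform $\widehat\psi$ is smooth, compactly supported, and identically $1$ on $\overline{\supp\widehat\varphi}$, so that $\psi_j\ast(\varphi_j\ast\vec f)=\varphi_j\ast\vec f$ in $\cs'_\infty(\rn)$; this reproducing identity, matched to the Fourier support of $\varphi$, drives everything that follows.

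\textbf{Step 1 (Plancherel--Polya with an $A$-th power).} From the reproducing identity above and the rapid decay of $\psi$, first deduce the pointwise bound
\begin{equation*}
\lf|\varphi_j\ast\vec f(x)\r|\lesssim\int_{\rn}\frac{2^{jn}\,|\varphi_j\ast\vec f(y)|}{(1+2^j|x-y|)^{M}}\,dy
\end{equation*}
for any $M$ as large as desired. For $A\in(0,1]$, combine this with the sub-additivity $|a+b|^A\le|a|^A+|b|^A$ and a Frazier--Jawerth-type iteration (splitting the integral according to dyadic annuli in $2^j|x-y|$ and absorbing a geometric tail) to upgrade the estimate to
\begin{equation*}
\lf|\varphi_j\ast\vec f(x)\r|^A\lesssim\int_{\rn}\frac{2^{jn}\,|\varphi_j\ast\vec f(y)|^A}{(1+2^j|x-y|)^{AR}}\,dy,
\end{equation*}
with a constant depending on both $A$ and $R$. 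Since $A_{Q_{jk}}$ is a fixed matrix independent of $x$ and $y$ and does not alter the Fourier support, the same inequality holds verbatim with $\varphi_j\ast\vec f$ replaced by $A_{Q_{jk}}(\varphi_j\ast\vec f)$.

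\textbf{Step 2 (Changing reducing operators and comparing distances).} Partition $\rn=\bigcup_{\ell\in\zz^n}Q_{j\ell}$ and, for each $y\in Q_{j\ell}$, insert $A_{Q_{j\ell}}^{-1}A_{Q_{j\ell}}$ between $A_{Q_{jk}}$ and $\varphi_j\ast\vec f(y)$. By \eqref{wd},
\begin{equation*}
\lf|A_{Q_{jk}}(\varphi_j\ast\vec f)(y)\r|^A\le\lf\|A_{Q_{jk}}A_{Q_{j\ell}}^{-1}\r\|^{A}\lf|A_{Q_{j\ell}}(\varphi_j\ast\vec f)(y)\r|^A\lesssim(1+|k-\ell|)^{Ar}\lf|A_{Q_{j\ell}}(\varphi_j\ast\vec f)(y)\r|^A.
\end{equation*}
For $x\in Q_{jk}$ and $y\in Q_{j\ell}$, the elementary estimate $2^j|x-y|\ge|k-\ell|-\sqrt n$ gives $(1+2^j|x-y|)^{-AR}\lesssim(1+|k-\ell|)^{-AR}$ uniformly in such $x,y$. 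Taking the supremum over $x\in Q_{jk}$ in the band-limited inequality from Step 1 and reassembling the integrals cube-by-cube produces exactly the claimed bound with exponent $-A(R-r)$.

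The main obstacle is the $A$-th power upgrade in Step 1: the first-power Plancherel--Polya inequality is a direct consequence of the reproducing formula and the decay of $\psi$, but converting it into an $L^A$-averaged form for $A\in(0,1]$ is the standard technical heart of Peetre-type maximal arguments and requires a careful sub-additivity iteration, which explains why the constant in Lemma \ref{cy} must be allowed to depend on both $A$ and $R$.
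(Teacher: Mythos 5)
Your argument is correct and is essentially the proof of \cite[(2.8)]{FR}, which the paper cites for Lemma \ref{cy} without reproducing: a Plancherel--Polya subaveraging estimate for the band-limited function $\varphi_j\ast\vec{f}$ (via a reproducing Schwartz function $\psi$ with $\widehat{\psi}=1$ on $\supp\widehat{\varphi}$), followed by the weak-doubling swap of $A_{Q_{jk}}$ for $A_{Q_{j\ell}}$ and the comparison $1+2^j|x-y|\gtrsim 1+|k-\ell|$ for $x\in Q_{jk}$, $y\in Q_{j\ell}$. The only details to make explicit are that one may assume $AR>n$ (the case of smaller $R$ follows because the right-hand side only increases as $R$ decreases) and that the sub-additivity bootstrap in your Step 1 requires the a priori finiteness of the relevant Peetre-type maximal quantity, which is guaranteed by the polynomial growth of $\varphi_j\ast\vec{f}$ established in Lemma \ref{a.e.}.
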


Recall that the
\emph{Hardy--Littlewood maximal operator $\mathcal{M}$}
is defined by setting,
for any locally integrable function
$f$ and any $x\in\rn$,
\begin{equation}\label{maximal function}
\mathcal{M}(f)(x):=\sup_{x\in B}\frac{1}{|B|}\int_B
\lf|f(y)\r|\,dy=\sup_{x\in B} \fint_B \lf|f(y)\r|\,dy,
\end{equation}
where the supremum is
taken over all the balls $B$ of $\rn$ containing $x$.
Denote by the \emph{symbol}
$\mathscr{M}(\rn)$
the set of all the complex-valued
measurable functions on $\rn$.

\begin{lemma}\label{jcf}
Let $\mathcal{M}$ be
the maximal operator
in \eqref{maximal function}
and $\eta>n$.
Then there exists a positive constant
$C$ such that,
for any $j\in\zz$ and $h\in \mathscr{M}(\rn)$,
\begin{equation*}
\sum_{k\in\zz^n}
\sum_{\ell\in\zz^n}
(1+|k-\ell|)^{-\eta}2^{jn}
\int_{Q_{j\ell}}
|h(s)|\,ds\mathbf{1}_{Q_{jk}}
\leq C \mathcal{M}(h).
\end{equation*}
\end{lemma}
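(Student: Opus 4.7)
The plan is to evaluate the left-hand side pointwise and, after the indicator collapses the outer sum to a single term, dominate the remaining sum over $\ell\in\zz^n$ by $\mathcal{M}(h)$ via a dyadic shell decomposition.

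First I would fix $x\in\rn$ and $j\in\zz$. Since the family $\{Q_{jk}\}_{k\in\zz^n}$ tiles $\rn$, there is a unique $k_0:=k_0(x,j)\in\zz^n$ with $x\in Q_{jk_0}$, so the factor $\mathbf{1}_{Q_{jk}}(x)$ forces $k=k_0$. It therefore suffices to prove that
$$
\sum_{\ell\in\zz^n}(1+|k_0-\ell|)^{-\eta}\,2^{jn}\int_{Q_{j\ell}}|h(s)|\,ds
\lesssim \mathcal{M}(h)(x),
$$
with the implicit constant independent of $x$, $j$, and $h$.

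Next I would split $\zz^n$ into the dyadic shells $S_0:=\{k_0\}$ and, for any $i\in\nn$, $S_i:=\{\ell\in\zz^n:\ 2^{i-1}\leq 1+|k_0-\ell|<2^i\}$. The geometric input is that, for any $\ell\in S_i$ and any $s\in Q_{j\ell}$, the triangle inequality combined with $\mathrm{diam}(Q_{j\ell})=\sqrt{n}\,2^{-j}$ and $|x-x_{Q_{j\ell}}|\leq 2^{-j}(|k_0-\ell|+\sqrt{n})$ yields $|x-s|\leq C_n 2^{i-j}$ for a dimensional constant $C_n$. Hence the cubes $\{Q_{j\ell}\}_{\ell\in S_i}$ are pairwise disjoint and all contained in $B(x,C_n 2^{i-j})$, so the very definition of the Hardy--Littlewood maximal function in \eqref{maximal function} gives
$$
\sum_{\ell\in S_i}\int_{Q_{j\ell}}|h(s)|\,ds
\leq \int_{B(x,C_n 2^{i-j})}|h(s)|\,ds
\leq \lf|B(x,C_n 2^{i-j})\r|\,\mathcal{M}(h)(x)
\lesssim 2^{(i-j)n}\mathcal{M}(h)(x).
$$

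Finally, using $(1+|k_0-\ell|)^{-\eta}\lesssim 2^{-i\eta}$ on each shell $S_i$ and summing over $i\in\zz_+$, I would conclude
$$
\sum_{\ell\in\zz^n}(1+|k_0-\ell|)^{-\eta}\,2^{jn}\int_{Q_{j\ell}}|h(s)|\,ds
\lesssim \mathcal{M}(h)(x)\sum_{i\in\zz_+}2^{-i(\eta-n)},
$$
and the geometric series converges precisely because $\eta>n$, which is exactly the single place where the hypothesis is used. The main (in fact only) delicate point is the balance in this last step: the shell at scale $2^i$ contributes $2^{in}$ of maximal-function mass, and it must be absorbed by the decay factor $2^{-i\eta}$. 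Every other ingredient is either elementary geometry of dyadic cubes or the definition of $\mathcal{M}$, so I do not anticipate any further obstacle.
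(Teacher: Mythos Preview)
Your proof is correct and follows essentially the same approach as the paper: collapse the outer sum via the indicator, decompose the remaining $\ell$-sum into dyadic shells around $k_0$, bound each shell by a ball average controlled by $\mathcal{M}(h)(x)$, and sum the resulting geometric series using $\eta>n$. The only cosmetic difference is that the paper's shells are indexed by $|\ell-k_x|$ rather than $1+|k_0-\ell|$ (so your $S_0$ and $S_1$ overlap harmlessly at $\ell=k_0$), but the argument and the place where the hypothesis $\eta>n$ enters are identical.
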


\begin{proof}
Observe that, for any given $j\in\zz$
and any $x\in\rn$, it is easy to see that
there exists a unique $k_x\in\zz^n$
such that $x\in Q_{jk_x}$.
Using this, we find that, for any $x\in\rn$,
\begin{align*}
&\sum_{k\in\zz^n}
\sum_{\ell\in\zz^n}
(1+|k-\ell|)^{-\eta}2^{jn}
\int_{Q_{j\ell}}|h(s)|\,ds
\mathbf{1}_{Q_{jk}}(x)\\
&\quad=
\sum_{\ell\in\zz^n}
(1+|k_x-\ell|)^{-\eta}2^{jn}
\int_{Q_{j\ell}}|h(s)|\,ds\\
&\quad=\sum_{\{\ell\in\zz^n:\,|\ell-k_x|\leq 1\}}
(1+|k_x-\ell|)^{-\eta}2^{jn}
\int_{Q_{j\ell}}|h(s)|\,ds
+\sum_{m\in\mathbb{N}}
\sum_{\{\ell\in\zz^n:\,2^{m-1}<|\ell-k_x|\leq 2^m\}}
\cdots\\
&\quad\lesssim\sum_{\{\ell\in\zz^n:\,|\ell-k_x|\leq 1\}}
\int_{Q_{j\ell}}|h(s)|\,ds
+\sum_{m\in\mathbb{N}}
\sum_{\{\ell\in\zz^n:\,2^{m-1}<|\ell-k_x|\leq 2^m\}}
2^{-m\eta}\int_{Q_{j\ell}}|h(s)|\,ds\\
&\quad\sim
\int_{\bigcup_{\{\ell\in\zz^n:\,|\ell-k_x|\leq1\}}
Q_{j\ell}}|h(s)|\,ds+
\sum_{m\in\mathbb{N}}2^{-m\eta}
\int_{\bigcup_{\{\ell\in\zz^n:\,2^{m-1}<|\ell-k_x|
\leq 2^m\}}Q_{j\ell}}
|h(s)|\,ds\\
&\quad\lesssim
\sum_{m\in\zz_+}2^{-m\eta}2^{mn}
\fint_{B_m}|h(s)|\,ds\lesssim\mathcal{M}(h)(x),
\end{align*}
where $B_m$ for any $m\in\zz_+$ is the smallest ball containing both
$x$ and $\bigcup_{\{\ell\in\zz^n:\,|\ell-k_x|\leq 2^m\}} Q_{j\ell}$.
This finishes the proof of Lemma \ref{jcf}.
\end{proof}
Now, we recall the definition of
the space $L^p(\ell^q)$,
which can be found in \cite[p. 14]{T1}.

\begin{definition}\label{lplq}
Let $p\in(0,\infty]$ and $q\in(0,\infty]$.
Then the \emph{space $L^p(\ell^q)$} is defined by setting
$$L^p(\ell^q):=\lf\{\{f_j\}_{j\in\zz}\subset\mathscr{M}(\rn):\
 \ \lf\|\lf\{f_j\r\}_{j\in\zz}\r\|_{L^p(\ell^q)}<\infty \r\},$$
where
\begin{equation*}
\lf\|\lf\{f_j\r\}_{j\in\zz}\r\|_{L^p(\ell^q)}:=
\lf\{\int_\rn\lf[\sum_{j\in\zz}
\lf|f_j(x)\r|^q\r]^{p/q}\,dx\r\}^{1/p}
\end{equation*}
with suitable modifications made when $p=\infty$ or $q=\infty$.
\end{definition}
The following lemma
is just \cite[Corollary 3.8]{FR}.

\begin{lemma}\label{c38}
Let $p\in(0,\infty)$,
$q\in(0,\infty]$,
$W\in A_p(\rn,\mathbb{C}^m)$,
and $\{A_Q\}_{Q\in\mathcal{D}}$
be a sequence of reducing operators of
order $p$ for $W$.
For any $j\in\zz$, $x\in\rn$,
and $f\in L^1_{\loc}(\rn)$,
let
$$
\gamma_j(x):=
\sum_{Q\in\mathcal{D}_j}
\lf\|W^{1/p}(x)A_Q^{-1}\r\|\mathbf{1}_{Q}(x)
$$
and
$$
E_j(f):=
\sum_{Q\in\mathcal{D}_j}
\lf[\fint_Q f(y)\,dy\r]\mathbf{1}_Q.
$$
Then there exists a positive constant $C$
such that,
for any sequence $\{f_j\}_{j\in\zz}$
of measurable functions on $\rn$,
\begin{equation*}
\lf\|\lf\{\gamma_jE_j\lf(f_j\r)\r\}_{j\in\zz}\r\|_{L^p(\ell^q)}
\leq C
\lf\|\lf\{E_j\lf(f_j\r)\r\}_{j\in\zz}\r\|_{L^p(\ell^q)}.
\end{equation*}
\end{lemma}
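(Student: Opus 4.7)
The plan is to reduce the mixed-norm inequality to a uniform averaged bound for $\gamma_j$ over dyadic cubes by exploiting that $E_j(f_j)$ is a step function constant on each $Q\in\mathcal{D}_j$. First I will establish that, for some $\epsilon>0$ depending only on the matrix $A_p$-constant of $W$,
$$
\fint_Q\gamma_j(x)^{p(1+\epsilon)}\,dx\lesssim 1
$$
uniformly in $Q\in\mathcal{D}_j$ and $j\in\zz$. Applying the defining inequality of the reducing operator with each standard basis vector $\vec{e_i}\in\mathbb{C}^m$ as input will give $\fint_Q|W^{1/p}(x)A_Q^{-1}\vec{e_i}|^p\,dx\sim|A_QA_Q^{-1}\vec{e_i}|^p=1$; summing over $i$ and invoking the norm equivalence $\|B\|^p\sim\sum_i|B\vec{e_i}|^p$ (from the proof of Corollary \ref{norm weight}) will yield $\fint_Q\gamma_j^p\,dx\lesssim 1$. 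To upgrade to the higher exponent $p(1+\epsilon)$, I will invoke Lemma \ref{scalar weight}: for each $\vec{z}$, the scalar weight $|W^{1/p}(\cdot)\vec{z}|^p$ is in $A_{\max\{1,p\}}(\rn)$ with scalar $A_p$-constant bounded uniformly in $\vec{z}$ by the matrix $A_p$-constant of $W$. The scalar reverse H\"older inequality then produces an $\epsilon>0$ independent of $\vec{z}$, $Q$, and $j$, so that $\fint_Q|W^{1/p}(x)A_Q^{-1}\vec{e_i}|^{p(1+\epsilon)}\,dx\lesssim 1$; summing over $i$ completes the averaged bound.

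Next, using the step-function structure of $E_j(f_j)$, I will deduce a uniform scalar $L^s$-bound for the multiplication by $\gamma_j$. Denoting by $c_Q$ the constant value of $E_j(f_j)$ on $Q\in\mathcal{D}_j$, for any $s\in(0,p(1+\epsilon)]$ H\"older combined with the averaged bound gives $\fint_Q\gamma_j^s\,dx\lesssim 1$, whence
$$
\int_Q|\gamma_jE_j(f_j)|^s\,dx=|c_Q|^s\int_Q\gamma_j^s\,dx\lesssim|c_Q|^s|Q|=\int_Q|E_j(f_j)|^s\,dx.
$$
Summing over $Q\in\mathcal{D}_j$ will yield $\|\gamma_jE_j(f_j)\|_{L^s(\rn)}\lesssim\|E_j(f_j)\|_{L^s(\rn)}$ uniformly in $j$, so the diagonal multiplication $T\colon(h_j)\mapsto(\gamma_jh_j)$ is bounded on $L^s(\ell^s)$ for each such $s$.

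Finally, I will promote the scalar $L^s$-bounds to the full mixed-norm $L^p(\ell^q)$ inequality. Fixing $s_0,s_1$ with $0<s_0<\min\{p,q\}\leq\max\{p,q\}<s_1<p(1+\epsilon)$, the operator $T$ is positivity-preserving and diagonal in $j$, so the uniform $L^{s_0}(\ell^{s_0})$ and $L^{s_1}(\ell^{s_1})$ bounds from the previous step will, via Calder\'on product (complex) interpolation for mixed-norm spaces, extend to every intermediate $L^p(\ell^q)$; the endpoint cases $q=\infty$ or small $p,q$ will be handled by limiting or real-variable substitute arguments. The main obstacle is precisely this promotion step: since $\gamma_j$ admits no uniform pointwise bound in $j$ (only an averaged one), the Fefferman--Stein vector-valued maximal inequality cannot be invoked via a naive pointwise comparison. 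Instead the promotion must exploit both the step-function structure of $E_j(f_j)$---which makes the uniform averaged bound on $\gamma_j$ directly usable via integration on each cube---and the positive, diagonal structure of $T$, which permits mixed-norm interpolation.
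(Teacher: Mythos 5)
The paper offers no proof of this lemma; it is quoted verbatim as \cite[Corollary 3.8]{FR}. Your Steps 1 and 2 are sound: the reducing-operator definition applied to $\vec z=A_Q^{-1}\vec e_i$ gives $\fint_Q\|W^{1/p}(x)A_Q^{-1}\|^p\,dx\lesssim1$, the uniform reverse H\"older inequality for the scalar weights $|W^{1/p}(\cdot)\vec z|^p$ upgrades this to $\fint_Q\gamma_j^{\,p(1+\epsilon)}\,dx\lesssim1$, and the step-function structure of $E_j(f_j)$ then yields the diagonal bounds $\|\gamma_jE_j(f_j)\|_{L^s}\lesssim\|E_j(f_j)\|_{L^s}$ for $s\le p(1+\epsilon)$. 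The gap is Step 3. Calder\'on-product (complex) interpolation between $L^{p_0}(\ell^{q_0})$ and $L^{p_1}(\ell^{q_1})$ produces $L^p(\ell^q)$ only with $1/p=(1-\theta)/p_0+\theta/p_1$ and $1/q=(1-\theta)/q_0+\theta/q_1$; starting from two \emph{diagonal} endpoints $L^{s_0}(\ell^{s_0})$ and $L^{s_1}(\ell^{s_1})$ you therefore never leave the diagonal $p=q$, and no off-diagonal space $L^p(\ell^q)$ with $p\neq q$ is reached. Positivity and diagonality in $j$ do not alter the identification of the interpolation spaces, so the ``promotion'' step is not an argument but a restatement of the claim.

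The failure is not merely technical: the information your proof retains --- the per-cube averaged bounds $\fint_Q\gamma_j^{\,s_1}\,dx\lesssim1$ for a fixed finite $s_1$ --- is provably insufficient for the off-diagonal cases with $q>p$, in particular for $q=\infty$. Indeed, let $\gamma_j:=\lambda_j$ on a set $E_j$ with $|E_j\cap Q|=\lambda_j^{-s_1}|Q|$ for every $Q\in\mathcal D_j$ with $Q\subset[0,1]^n$ and $\gamma_j:=1$ elsewhere; then $\fint_Q\gamma_j^{\,s_1}\,dx\le2$ for every such $Q$, but if the $E_j$ are arranged quasi-independently across scales and $\lambda_j\uparrow\infty$ with $\lambda_j^{\,p-s_1}=j^{-1}$ (note $\sum_j\lambda_j^{-s_1}<\infty$ since $s_1/(s_1-p)>1$), then for $h_j:=\mathbf 1_{[0,1]^n}$, $0\le j\le N$, one gets $\int_{[0,1]^n}(\sup_{j\le N}\gamma_j)^p\,dx\gtrsim\sum_{j\le N}\lambda_j^{\,p-s_1}\sim\log N\to\infty$ while $\|\sup_{j\le N}h_j\|_{L^p}=1$. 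Such $\gamma_j$ cannot come from a single matrix weight, which is exactly the point: any correct proof must exploit the cross-scale coherence of the family $\{\gamma_j\}_j$, e.g.\ the weak/strong doubling of $\{A_Q\}$ (Lemma \ref{wsz}) and the uniform $A_\infty$/distributional structure of the scalar weights $|W^{1/p}(\cdot)\vec z|^p$, none of which enters your argument. (For the record, when $q\le p$ your averaged bound does suffice without interpolation: dualize the outer $L^{p/q}$ norm, apply H\"older on each $Q$ with exponent $s=(1+\epsilon)p/q$, and control the dual function by the Hardy--Littlewood maximal operator; but this route also stops at $q\le p$.)
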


The following lemma is the
famous Fefferman--Stein vector-valued
maximal inequality,
see \cite[Theorem 1]{FS}.

\begin{lemma}\label{lm1}
Let $p\in(1,\infty)$
and $q\in(1,\infty]$.
Then there exists
a positive constant $C$
such that,
for any sequence
$\{f_j\}_{j\in\zz}\subset\mathscr{M}(\rn)$,
\begin{equation*}
\lf\|\lf\{\sum_{j\in\zz}\lf[\cm \lf(f_j\r)\r]^q\r\}^{1/q}\r\|_{L^p(\rn)}
\leq C
\lf\|\lf[\sum_{j\in\zz}\lf|f_j\r|^q\r]^{1/q}\r\|_{L^p(\rn)},
\end{equation*}
where $\mathcal{M}$ is
the same as in  \eqref{maximal function}.
\end{lemma}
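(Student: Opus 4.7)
The plan is to follow the original Fefferman--Stein argument: establish endpoint estimates for the vector-valued sublinear operator $T:\{f_j\}_{j\in\zz}\mapsto\{\cm(f_j)\}_{j\in\zz}$ and interpolate, supplemented by duality to cover the full parameter range.

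First, dispose of two easy cases. When $q=\infty$, the pointwise inequality $\sup_{j\in\zz}\cm(f_j)\le\cm(\sup_{j\in\zz}|f_j|)$, which follows immediately from the monotonicity of the supremum in \eqref{maximal function}, reduces the claim to the classical scalar Hardy--Littlewood maximal inequality on $L^p(\rn)$ for $p\in(1,\infty)$. When $p=q\in(1,\infty)$, Fubini together with the scalar inequality gives
\[
\lf\|\lf\{\cm(f_j)\r\}_{j\in\zz}\r\|_{L^q(\ell^q)}^q=\sum_{j\in\zz}\lf\|\cm(f_j)\r\|_{L^q(\rn)}^q\le C^q\sum_{j\in\zz}\lf\|f_j\r\|_{L^q(\rn)}^q=C^q\lf\|\lf\{f_j\r\}_{j\in\zz}\r\|_{L^q(\ell^q)}^q,
\]
which provides the strong $L^q(\ell^q)$-endpoint.

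The heart of the matter is the vector-valued weak-$(1,1)$ estimate for $q\in(1,\infty)$:
\[
\lf|\lf\{x\in\rn:\ \lf(\sum_{j\in\zz}[\cm(f_j)(x)]^q\r)^{1/q}>\lambda\r\}\r|\le\frac{C}{\lambda}\int_{\rn}\lf(\sum_{j\in\zz}|f_j(x)|^q\r)^{1/q}\,dx.
\]
To prove it, I apply the Calder\'on--Zygmund decomposition at level $\lambda$ to the scalar majorant $F(x):=(\sum_{j\in\zz}|f_j(x)|^q)^{1/q}\in L^1(\rn)$, producing disjoint dyadic cubes $\{Q_k\}$ with $\sum_k|Q_k|\lesssim\lambda^{-1}\|F\|_{L^1(\rn)}$ and $F\le\lambda$ almost everywhere off $\bigcup_k Q_k$. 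Decompose each component as $f_j=g_j+b_j$, where $g_j=f_j$ off the cubes and $g_j=\fint_{Q_k}f_j$ inside each $Q_k$; Jensen's inequality then yields the crucial pointwise bound $(\sum_{j\in\zz}|g_j|^q)^{1/q}\lesssim\lambda$ almost everywhere, which interpolated with the trivial $L^1(\ell^q)$-bound $\|\{g_j\}\|_{L^1(\ell^q)}\le\|F\|_{L^1}$ gives $\|\{g_j\}\|_{L^q(\ell^q)}^q\lesssim\lambda^{q-1}\|F\|_{L^1}$. Applying the already-established strong $L^q(\ell^q)$ bound to $\{g_j\}$ together with Chebyshev's inequality controls the good part. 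The bad part is handled by absorbing $\bigcup_k 2Q_k$ (of total measure $\lesssim\lambda^{-1}\|F\|_{L^1}$) into the exceptional set and, on its complement, directly estimating the size of averages of each $b_j$ by exploiting $\supp b_j\subset\bigcup_k Q_k$ and organizing the contribution by the dyadic scale of $Q_k$ relative to the ball in the supremum defining $\cm$.

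Marcinkiewicz interpolation between the weak-$(1,1)$ and strong $(q,q)$ bounds for $T$ now yields the desired inequality for every $p\in(1,q]$. For the remaining range $p\in(q,\infty)$, one linearizes $\cm$ by making a measurable choice of ball nearly attaining the supremum, and then uses the duality $(L^p(\ell^q))^{\ast}=L^{p'}(\ell^{q'})$ with $p'\in(1,q')$ to reduce to the previously settled case. The main obstacle is the bad-part analysis in the weak-$(1,1)$ step: because $\cm$ carries no smoothing kernel, the off-diagonal estimate on $\cm(b_j)$ cannot be borrowed from Calder\'on--Zygmund operator theory and must instead be derived by a geometric argument that keeps track of dyadic scales simultaneously for all $j$ in the $\ell^q$-sum, and it is essential that the stopping cubes $\{Q_k\}$ are produced from the scalar majorant $F$ rather than component by component, so that the good part inherits the $L^\infty(\ell^q)$ bound $\lesssim\lambda$ directly from the stopping condition.
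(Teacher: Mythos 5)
The paper offers no proof of this lemma at all: it is quoted verbatim from Fefferman--Stein \cite[Theorem 1]{FS}, so your proposal is to be measured against the classical argument. For $q=\infty$, for $p=q$, and for the range $1<p\le q$ via the vector-valued weak-$(1,1)$ estimate plus Marcinkiewicz interpolation, your outline is the standard one and is sound. The bad-part step is the most under-specified: since $\cm$ sees no cancellation, the mean-zero property of $b_j$ on each $Q_k$ is useless, and the usual fix is to observe that, for $x$ outside the dilated cubes, any ball $B\ni x$ meeting $Q_k$ satisfies $Q_k\subset cB$, whence $\cm(b_j)(x)\le C\,\cm(\beta_j)(x)$ with $\beta_j:=\sum_k(\fint_{Q_k}|b_j|)\mathbf{1}_{Q_k}$; the family $\{\beta_j\}$ is again of ``good'' type ($\ell^q$-norm $\lesssim\lambda$, supported on a set of measure $\lesssim\lambda^{-1}\|F\|_{L^1(\rn)}$), so the strong $L^q(\ell^q)$ bound finishes it. Your description of a ``geometric argument organized by dyadic scales'' is compatible with this, so I regard that step as a fixable sketch rather than an error.

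The genuine gap is the range $p\in(q,\infty)$. Linearizing $\cm$ by a measurable choice of balls $B_j(x)\ni x$, i.e.\ $T_jf(x)=\fint_{B_j(x)}|f|$, and dualizing against $L^{p'}(\ell^{q'})$ requires a \emph{uniform} bound for the adjoint family $T_j^{*}h(y)=\int_{\rn}|B_j(x)|^{-1}\mathbf{1}_{B_j(x)}(y)h(x)\,dx$ on $L^{p'}(\ell^{q'})$, and this does \emph{not} reduce to the previously settled case, because $T_j^{*}$ is not pointwise dominated by $\cm$: taking $B(x):=B(\mathbf{0},2|x|)$ and $h:=\mathbf{1}_{B(\mathbf{0},1)}$ gives $T^{*}h(\mathbf{0})=\int_{B(\mathbf{0},1)}|B(\mathbf{0},2|x|)|^{-1}\,dx=\infty$ while $\cm(h)(\mathbf{0})<\infty$. (Nor can one interpolate upward from $q$, since $L^\infty(\ell^q)\to L^\infty(\ell^q)$ fails for $q<\infty$.) The classical resolution --- and the one in Fefferman--Stein's own paper --- is to dualize the \emph{scalar} function $\sum_{j\in\zz}[\cm(f_j)]^{q}$ in the pairing $L^{p/q}$--$L^{(p/q)'}$ and invoke their Lemma~1, the weighted inequality $\int_{\rn}[\cm(f)]^{q}u\,dx\le C_q\int_{\rn}|f|^{q}\cm(u)\,dx$ for $q\in(1,\infty)$ and $u\ge0$, followed by H\"older's inequality against $\|\cm(u)\|_{L^{(p/q)'}(\rn)}\lesssim\|u\|_{L^{(p/q)'}(\rn)}$. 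That weighted inequality is a genuinely additional ingredient (itself proved by a covering argument and interpolation with change of measure) which your proposal neither states nor replaces; as written, the case $p>q$ is not established.
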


\begin{proof}[Proof of Theorem \ref{pmf}]

Let all the symbols be the same as in  the present theorem.
Then, by the
definition of $(\varphi_{j}^{*}\vec f)_{a}^{(W,p)}$
in \eqref{pjd}, we
find that $$W^{1/p}\lf(\varphi_j\ast\vec{f}\r)\leq
\lf(\varphi_{j}^{*}\vec f\r)_{a}^{(W,p)},$$
which implies that, for any $\vec{f}\in[\cs'_\infty(\rn)]^m$,
$$
\lf\|\vec f\r\|_
{\dot{F}^{\alpha,q}_{p}(W)}
\lesssim
\lf\|\vec f\r\|^{\star}_
{\dot{F}^{\alpha,q}_{p}(W)}.
$$
Thus, to show Theorem \ref{pmf},
it remains to prove that,
for any $\vec{f}\in[\cs'_\infty(\rn)]^m$,
\begin{equation}\label{r45}
\lf\|\vec f\r\|^{\star}
_{\dot{F}^{\alpha,q}_{p}(W)}
\lesssim
\lf\|\vec f\r\|
_{\dot{F}^{\alpha,q}_{p}(W)}.
\end{equation}
Let  $\{A_Q\}_{Q\in\mathcal{D}}$
be a sequence of reducing operators of order $p$ for $W$.
For any $\vec{f}\in[\cs'_\infty(\rn)]^m$,
let
\begin{equation}\label{pq}
\lf\|\vec{f}\r\|
_{\dot{F}_{p}^{\alpha,q}(\{A_Q\})}^{\star}:=
\lf\|\lf[\sum_{j\in\zz}
\sum_{Q\in\mathcal{D}_j}2^{j\alpha q}
\sup_{y\in\rn}
\frac{|A_Q(\varphi_j\ast\vec{f})(y)
|^q}{(1+2^j|\cdot-y|)^{aq}}
\mathbf{1}_Q\r]^{1/q}\r\|_{L^p(\rn)},
\end{equation}
where, for any $j\in\zz$, $\mathcal{D}_j$
is the same as in  \eqref{dj}.
To prove \eqref{r45},
we first show that,
for any $\vec{f}\in[\cs_\infty'(\rn)]^m$,
\begin{equation}\label{21}
\lf\|\vec{f}\r\|
_{\dot{F}_{p}^{\alpha, q}(\{A_Q\})}^{\star}
\lesssim
\lf\|\vec{f}\r\|_{\dot{F}_p^{\alpha, q}(\{A_Q\})}.
\end{equation}
Indeed, by Lemma \ref{wsz}, a geometrical
observation that $1+2^j|x-y|\sim 1+|k-s|$
for any $x\in Q_{jk}$ and $y\in Q_{js}$,
Lemma \ref{cy}, and the fact
that $1+|k-\ell|\leq(1+|k-s|)(1+|s-\ell|)$
for any
$k,s,\ell\in\zz^n$, we obtain,
for any given $A\in(0,1]$ and
for any  $j\in\zz$,
$k\in\zz^n$,
$\vec{f}\in[\cs_\infty'(\rn)]^m$,
and $x\in Q_{jk}$,
\begin{align}\label{pg}
&\sup_{y\in\rn}
\frac{|A_{Q_{jk}}(\varphi_j\ast\vec{f})(y)|
^A}{(1+2^j|x-y|)^{aA}}\\
&\quad=\sup_{s\in\zz^n}
\sup_{y\in Q_{js}}
\frac{|A_{Q_{jk}}(\varphi_j\ast\vec{f})(y)|^A}
{(1+2^j|x-y|)^{aA}}
\noz\\
&\quad\leq\sup_{s\in\zz^n}
\sup_{y\in Q_{js}}
\frac{\|A_{Q_{jk}}A_{Q_{js}}^{-1}\|^{A}
|A_{Q_{js}}(\varphi_j\ast\vec{f})(y)|^A}
{(1+2^j|x-y|)^{aA}}
\noz\\
&\quad\lesssim\sup_{s\in\zz^n}
\sup_{y\in Q_{js}}
\frac{(1+|k-s|)^{Ar}
|A_{Q_{js}}(\varphi_j\ast\vec{f})(y)|^A}
{(1+2^j|x-y|)^{aA}}
\noz\\
&\quad\sim\sup_{s\in\zz^n}
\lf(1+|k-s|\r)^{Ar}
\lf(1+|k-s|\r)^{-aA}\sup_{y\in Q_{js}}
\lf|A_{Q_{js}}(\varphi_j\ast\vec{f})(y)\r|^A\noz\\
&\quad\lesssim\sup_{s\in\zz^n}
\sum_{\ell\in\zz^n}\lf(1+|k-s|\r)^{-A(a-r)}
\lf(1+|\ell-s|\r)^{-A(R-r)}2^{jn}
\int_{Q_{j\ell}}
\lf|A_{Q_{j\ell}}
\lf(\varphi_j\ast\vec{f}\r)(z)\r|^A\,dz
\noz\\
&\quad\lesssim\sum_{\ell\in\zz^n}
\lf(1+|k-\ell|\r)^{-A(a-r)}2^{jn}
\int_{Q_{j\ell}}\lf|A_{Q_{j\ell}}
\lf(\varphi_j\ast\vec{f}\r)(z)\r|^A\,dz,\noz
\end{align}
where $r:=\frac{\beta}{p}$ and,
in the last step,
we used the fact that
$(1+|k-s|)(1+|\ell-s|)\geq(1+|k-\ell|)$
for any $k,\ell,s\in\zz^n$ and the fact that $a\in[r,\infty)$,
and chose an $R\in[a,\infty)$.
Let $A\in(0,1]$ satisfy $q/A>1$.
Using this, \eqref{pg}, and
the disjointness of the cubes $Q_{jk}$ for  any $k\in\zz^n$,
we further find that
\begin{align*}
&\sum_{Q\in\mathcal{D}_j}
\lf[2^{j\alpha}\sup_{y\in\rn}
\frac{|A_{Q}\lf(\varphi_j\ast\vec{f}\r)(y)|}
{(1+2^j|\cdot-y|)^{a}}
\mathbf{1}_{Q}(\cdot)\r]^q\\
&\quad=\sum_{k\in\zz^n}
2^{j\alpha q}
\lf[\sup _{y\in\rn}
\frac{|A_{Q_{jk}}\lf(\varphi_j\ast\vec{f}\r)(y)|^A}
{(1+2^j|\cdot-y|)^{aA}}
\mathbf{1}_{Q_{jk}}(\cdot)\r]^{q/A}\\
&\quad\lesssim
\lf[\sum_{k\in\zz^n}
\sum_{\ell\in\zz^n}\lf(1+|k-\ell|\r)^{-A(a-r)}
2^{jn}
\int_{Q_{j\ell}}
\lf|2^{j\alpha}A_{Q_{j\ell}}
\lf(\varphi_j\ast\vec{f}\r)(z)\r|^A\,dz
\mathbf{1}_{Q_{jk}}(\cdot)\r]^{q/A}.
\end{align*}
From
$a\in(\frac{n}{\min\{1,p,q\}}+r,\infty)$,
it follows that $\min\{1,p,q\}(a-r)>n$ and hence we can choose an $A\in(0,1]$
such that $A(a-r)>n$,
$p/A>1$,
and $q/A>1$.
Thus, by Lemma \ref{jcf}
and the Fefferman--Stein
vector-valued maximal inequality,
we conclude that,
for any $\vec{f}\in[\cs_\infty'(\rn)]^m$,
\begin{align*}
&\lf\|\lf\{\sum_{j\in\zz}
\sum_{Q\in\mathcal{D}_j}\lf[2^{j\alpha}
\sup_{y\in\rn}
\frac{|A_{Q}(\varphi_j\ast\vec{f})(y)|}
{(1+2^j|\cdot-y|)^{a}}
\mathbf{1}_{Q}\r]^q\r\}^{1/q}\r\|_{L^p(\rn)}\\
&\quad\lesssim
\lf\|\lf\{\sum_{j\in\zz}
\lf[\mathcal{M}
\lf(\sum_{Q\in{\mathcal{D}_j}}\lf[2^{j\alpha}
\lf|A_Q\lf(\varphi_j\ast\vec{f}\r)\r|
\mathbf{1}_{Q}\r]^A\r)\r]^{q/A}\r\}^{A/q}\r\|_
{L^{p/A}(\rn)}^{1/A}\\
&\quad\lesssim
\lf\|\vec f\r\|_
{\dot{F}^{\alpha,q}_{p}(\{A_Q\})},
\end{align*}
which implies that
\eqref{21} holds true for any
$\vec{f}\in[\cs_\infty'(\rn)]^m$.
Now, for any
$\vec{f}\in [\cs_\infty'(\rn)]^m$,
let
\begin{equation}\label{38x}
\lf\|\vec{f}\r\|_
{\dot{F}^{\alpha,q}_{p}(\{A_Q\})}^{\star\star}:=
\lf\|\lf[\sum_{j\in\zz}
\sum_{Q\in\mathcal{D}_j}
2^{j\alpha q}
\sup_{z\in Q}
\sup_{y\in\rn}
\frac{|A_Q(\varphi_j\ast\vec{f})(y)|^q}
{(1+2^j|z-y|)^{aq}}
\mathbf{1}_Q\r]^{1/q}\r\|_{L^p(\rn)}.
\end{equation}
From \eqref{21} and Lemma \ref{eq}, we infer that,
for any $\vec{f}\in[\cs'_\infty(\rn)]^m$,
$$
\lf\|\vec{f}\r\|_
{\dot{F}^{\alpha,q}_{p}(\{A_Q\})}^{\star}
\lesssim
\lf\|\vec{f}\r\|_
{\dot{F}^{\alpha,q}_{p}(\{A_Q\})}
\sim
\lf\|\vec{f}\r\|_
{\dot{F}^{\alpha,q}_{p}(W)}.
$$
By this,
to complete the proof of \eqref{r45},
we still need  to prove that,
for any
$\vec{f}\in[\cs'_\infty(\rn)]^m$,
\begin{equation}\label{572}
\lf\|\vec{f}\r\|
_{\dot{F}^{\alpha,q}_{p}(W)}^{\star}
\lesssim
\lf\|\vec{f}\r\|_
{\dot{F}^{\alpha,q}_{p}(\{A_Q\})}^{\star\star}
\lesssim
\lf\|\vec{f}\r\|_{\dot{F}^{\alpha,q}_{p}(\{A_Q\})}^{\star}.
\end{equation}
We first show the
first inequality of \eqref{572}.
For any $j\in\zz$ and $x\in\rn$,
let
$$
h_j(x):=
2^{j\alpha}
\sup_{y\in\rn}
\frac{|W^{1/p}(x)(\varphi_j\ast\vec{f})(y)|}
{(1+2^j|x-y|)^a},
$$
$$
k_j(x):=
\sum_{Q\in\mathcal{D}_j}
\lf|Q\r|^{-\alpha/n}\sup_{z\in Q}
\sup_{y\in\rn}
\frac{|A_{Q}(\varphi_j\ast\vec{f})(y)|}
{(1+2^j|z-y|)^a}
\mathbf{1}_Q(x),
$$
and
$$
\gamma_j(x):=
\sum_{Q\in\mathcal{D}_j}
\lf\|W^{1/p}(x)A_Q^{-1}\r\|
\mathbf{1}_Q(x).
$$
It is obvious that,
for any $j\in\zz$ and $x\in\rn$,
\begin{align}\label{39x}
h_j(x)&=\sum_{Q\in\mathcal{D}_j}2^{j\alpha}
\sup_{y\in\rn}
\frac{|W^{1/p}(x)A_Q^{-1}A_Q(\varphi_j\ast\vec{f})(y)|}
{(1+2^j|x-y|)^a}\mathbf{1}_{Q}(x)\\
&\leq\sum_{Q\in\mathcal{D}_j}
2^{j\alpha}\lf\|W^{1/p}(x)A_Q^{-1}\r\|
\sup_{y\in\rn}
\frac{|A_{Q}(\varphi_j\ast\vec{f})(y)|}
{(1+2^j|x-y|)^a}
\mathbf{1}_{Q}(x)\leq \gamma_j(x)k_j(x).\noz
\end{align}
Notice that $k_j$ is a constant
on any given cube $Q\in\mathcal{D}_j$,
which implies that
\begin{equation}\label{this}
E_j(k_j)=k_j,
\end{equation}
where $E_j$ is the same as in  Lemma \ref{c38}.
Then, by \eqref{pjida}, \eqref{39x}, Lemma \ref{c38},
\eqref{this}, and \eqref{38x},
we have,
for any $\vec{f}\in[\cs'_\infty(\rn)]^m$,

\begin{align}\label{57}
\lf\|\vec{f}\r\|_
{\dot{F}^{\alpha,q}_{p}(W)}^{\star}
&=\lf\|\lf\{h_{j}\r\}_{j\in\zz}\r\|_
{L^p(\ell^q)}
\leq
\lf\|\lf\{\gamma_jE_j\lf(k_j\r)\r\}_{j\in\zz}\r\|
_{L^p(\ell^q)}\\
&\lesssim
\lf\|\lf\{E_j\lf(k_j\r)\r\}_{j\in\zz}\r\|
_{L^p(\ell^q)}
\sim\lf\|\lf\{k_j\r\}_{j\in\zz}\r\|
_{L^p(\ell^q)}
\sim\lf\|\vec{f}\r\|
_{\dot{F}^{\alpha,q}_{p}(\{A_Q\})}^{\star\star},\noz
\end{align}
which is just the first inequality of \eqref{572}.
Next, we prove the
second inequality of \eqref{572}.
Indeed, using a geometrical observation, we find that
$1+2^j|x-y|\sim 1+|s-k|\sim 1+2^j|z-y|$
for any $x,z\in Q_{jk}$ and $y\in Q_{js}$.
From this, we deduce that,
for any $a\in(0,\infty)$,
$j\in\zz$,
$k\in\zz^n$,
and $x\in Q_{jk}$,
\begin{align*}
\sup_{z\in Q_{jk}}
\sup_{y\in\rn}
\frac{|A_{Q_{jk}}(\varphi_j\ast\vec{f})(y)|}
{(1+2^j|z-y|)^{a}}
\sim\sup_{y\in\rn}
\frac{|A_{Q_{jk}}(\varphi_j\ast\vec{f})(y)|}
{(1+2^j|x-y|)^{a}},
\end{align*}
which implies that,
for any $\vec{f}\in[\cs'_\infty(\rn)]^m$,
\begin{equation}\label{72}
\lf\|\vec{f}\r\|_
{\dot{F}^{\alpha,q}_{p}(\{A_Q\})}^{\star\star}\sim\lf\|\vec{f}\r\|
_{\dot{F}^{\alpha,q}_{p}(\{A_Q\})}^{\star}.
\end{equation}
Thus, both \eqref{57} and \eqref{72} imply \eqref{572}, and hence
\eqref{pmfd} holds true for any
$\vec{f}\in[\cs'_{\infty}(\rn)]^m$,
which completes the proof of Theorem \ref{pmf}.
\end{proof}

We now establish the Lusin-area function
characterization of matrix-weighted
Triebel--Lizorkin spaces.

\begin{theorem}\label{lusin}
Let $\alpha\in\mathbb{R}$,
$p\in(0,\infty)$,
$q\in(0,\infty]$,
and $W\in A_p(\rn,\mathbb{C}^m)$.
Assume that $\varphi\in\cs(\rn)$
satisfies both (T1) and (T2) of Definition \ref{FBW}.
Then $\vec{f}\in\dot{F}^{\alpha,q}_{p}(W)$
if and only if
$\vec{f}\in[\cs_\infty'(\rn)]^m$ with
 $\|\vec{f}\|_{\dot{F}^{\alpha,q}_{p}(W)}^{\square}<\infty$,
where
\begin{equation*}
\lf\|\vec f\r\|^{\square}
_{\dot{F}^{\alpha,q}_{p}(W)}
:=\lf\|\lf[\sum_{j\in\zz} 2^{j\alpha q}
\fint_{B(\cdot, 2^{-j})}
\lf|W^{1/p}(\cdot)
\lf(\varphi_j\ast\vec{f}\r)(y)\r|^q
\,dy\r]^{1/q}\r\|_{L^p(\rn)}
\end{equation*}
with usual modification made when $q=\infty$.
 Moreover, for any $\vec{f}\in[\cs_{\infty}'(\rn)]^m$,
\begin{equation}\label{lusind}
\lf\|\vec f\r\|_{\dot{F}^{\alpha,q}_{p}(W)}\sim\lf\|\vec f\r\|^{\square}
_{\dot{F}^{\alpha,q}_{p}(W)},
\end{equation}
where the positive equivalence constants are
 independent of $\vec{f}$.
\end{theorem}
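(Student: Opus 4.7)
The plan is to use Theorem \ref{pmf} as a bridge: I will sandwich the Lusin-area quasi-norm $\|\cdot\|^{\square}_{\dot F^{\alpha,q}_p(W)}$ between the original quasi-norm $\|\cdot\|_{\dot F^{\alpha,q}_p(W)}$ and the Peetre-maximal quasi-norm $\|\cdot\|^{\star}_{\dot F^{\alpha,q}_p(W)}$ from \eqref{pjida}, after which the equivalence \eqref{pmfd} already established closes the circle. The two sandwich inequalities require distinctly different techniques.

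The easy bound $\|\vec f\|^{\square}_{\dot F^{\alpha,q}_p(W)}\lesssim\|\vec f\|^{\star}_{\dot F^{\alpha,q}_p(W)}$ follows because, whenever $y\in B(x,2^{-j})$, one has $1+2^j|x-y|\le 2$, so the definition \eqref{pjd} of the Peetre maximal function gives
$$|W^{1/p}(x)(\varphi_j*\vec f)(y)|\le 2^a\lf(\varphi_j^{*}\vec f\r)_a^{(W,p)}(x).$$
Raising to the $q$-th power, averaging over $B(x,2^{-j})$, taking the $\ell^q$-sum in $j$ weighted by $2^{j\alpha q}$, and passing to the $L^p(\rn)$-norm produces the claimed inequality. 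Combined with \eqref{pmfd} this handles one direction.

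For the reverse direction $\|\vec f\|_{\dot F^{\alpha,q}_p(W)}\lesssim\|\vec f\|^{\square}_{\dot F^{\alpha,q}_p(W)}$, the plan is to establish the pointwise Plancherel--Polya estimate
$$|W^{1/p}(x)(\varphi_j*\vec f)(x)|^q\lesssim\fint_{B(x,2^{-j})}\lf|W^{1/p}(x)(\varphi_j*\vec f)(y)\r|^q\,dy$$
for every $j\in\zz$ and every $x\in\rn$, and then to multiply by $2^{j\alpha q}$, sum in $j$, and take the $L^p(\rn)$-norm. To prove the pointwise inequality, I fix $x$ and regard $y\mapsto W^{1/p}(x)(\varphi_j*\vec f)(y)$ as a $\mathbb C^m$-valued function whose components are scalar functions with Fourier transforms supported in $\{\xi\in\rn:|\xi|<2^j\pi\}$, since the matrix $W^{1/p}(x)$ is constant in the variable $y$ and $\widehat{\varphi_j}$ has this support by assumption (T2) of Definition \ref{FBW}. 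A scalar Plancherel--Polya inequality then yields, for any $q\in(0,\infty]$ and any scalar $h$ whose Fourier transform is supported in $\{|\xi|<2^j\pi\}$, the bound $|h(x)|^q\lesssim\fint_{B(x,2^{-j})}|h(y)|^q\,dy$. Applying this componentwise and using the equivalence $|\vec v|^q\sim\sum_k|v_k|^q$ (with constants depending only on $m$ and $q$) produces the sought pointwise estimate.

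The principal obstacle lies in the scalar Plancherel--Polya step when $q\in(0,1)$, where a direct Hardy--Littlewood maximal-function argument fails because the radius $2^{-j}$ is slightly above the natural spatial scale $(2^j\pi)^{-1}$ yet the exponent is below $1$. The standard remedy, which parallels the proof of Theorem \ref{pmf}, is to introduce an auxiliary exponent $A\in(0,1]$ with $Aq$ above a suitable threshold, apply a Peetre-type pointwise bound on the band-limited function in the spirit of Lemma \ref{cy}, and then close up via the Fefferman--Stein vector-valued maximal inequality (Lemma \ref{lm1}) in $L^{p/A}(\ell^{q/A})$. An alternative route first passes to the reducing-operator quasi-norm $\|\vec f\|_{\dot F^{\alpha,q}_p(\{A_Q\})}$ via Lemma \ref{eq} and applies Lemma \ref{cy} directly; in either case the constancy of $W^{1/p}(x)$ in $y$ ensures that the scalar arguments transfer componentwise without additional trouble.
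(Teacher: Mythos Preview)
Your overall strategy is sound, and your easy direction $\|\vec f\|^{\square}\lesssim\|\vec f\|^{\star}$ is exactly what the paper does. For the hard direction you take a genuinely different and more elementary route than the paper. The pointwise local sub-mean-value inequality you invoke,
\[
|h(x)|^{q}\lesssim \fint_{B(x,2^{-j})}|h(y)|^{q}\,dy
\]
for scalar $h$ with $\supp\widehat h\subset\{|\xi|<2^{j}\pi\}$, is in fact valid for \emph{every} $q\in(0,\infty]$: one multiplies $h$ by a smooth cutoff at scale $2^{-j}$, observes that the product is still band-limited at a comparable scale, and applies the global Nikolskii inequality $\|g\|_{\infty}\lesssim R^{n/q}\|g\|_{L^{q}(\rn)}$ to the localized function. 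Hence your caveat about $q<1$ and the proposed Fefferman--Stein fallback are unnecessary; once this scalar fact is granted, freezing $W^{1/p}(x)$ and arguing componentwise finishes the proof in one line.

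The paper proceeds differently. Rather than bounding $\|\vec f\|_{\dot F^{\alpha,q}_{p}(W)}$ directly by $\|\vec f\|^{\square}$, it establishes the stronger intermediate inequality $\|\vec f\|^{\star}_{\dot F^{\alpha,q}_{p}(\{A_Q\})}\lesssim\|\vec f\|^{\square}_{\dot F^{\alpha,q}_{p}(W)}$ (for $a$ large), passing through the reducing-operator Peetre maximal quasi-norm. This requires recycling estimate \eqref{pg} from the proof of Theorem \ref{pmf}, a Fubini/translation trick to manufacture the ball average inside the dyadic sum, and then a case split on $p$: for $p\in(0,1]$ one invokes Lemma \ref{ainf} (the $L^{\infty}$ bound on $\|A_{Q}W^{-1/p}\|$), while for $p\in(1,\infty)$ one uses H\"older together with Lemma \ref{a123}, before closing with the Fefferman--Stein inequality. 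Your argument bypasses all of this by exploiting that $W^{1/p}(x)$ is constant in the inner variable $y$; the paper's route, though longer, stays entirely within the reducing-operator framework developed for Theorem \ref{pmf} and does not appeal to any external scalar Plancherel--Polya lemma.
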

\begin{proof}
Let all the symbols be the same as in  the present theorem.
We now claim that,
for any $\vec{f}\in[\cs'_\infty(\rn)]^m$,
\begin{equation*}
\lf\|\vec{f}\r\|
_{\dot{F}^{\alpha,q}_{p}(W)}^{\star}
\sim\lf\|\vec{f}\r\|
_{\dot{F}^{\alpha,q}_{p}(W)}^{\square},
\end{equation*}
if $a$ is sufficiently large.
Then, by Theorem \ref{pmf},
we conclude that the present theorem holds true.

First, we prove that, when $a\in(0,\infty)$, then,
for any $\vec{f}\in[\cs'_{\infty}(\rn)]^m$,
\begin{equation}\label{zb}
\lf\|\vec{f}\r\|
_{\dot{F}^{\alpha,q}_{p}(W)}^{\square}
\lesssim
\lf\|\vec{f}\r\|
_{\dot{F}^{\alpha,q}_{p}(W)}^{\star}.
\end{equation}
By the change of variables, the fact that $1+2^j|y|\sim 1$
for any
$y\in B(\mathbf{0},2^{-j})$, and \eqref{pjd},
 we conclude that,
for any given $q\in(0,\infty)$
and $a\in(0,\infty)$,
and for any $j\in\zz$ and $x\in\rn$,
\begin{align*}
&\fint_{B(x, 2^{-j})}
\lf|W^{1/p}(x)\lf(\varphi_j\ast\vec{f}\r)(y)\r|^q\,dy\\
&\quad=\fint_{B(\mathbf{0}, 2^{-j})}
\lf|W^{1/p}(x)\lf(\varphi_j\ast\vec{f}\r)(x+y)\r|^q\,dy
\lesssim
\sup_{y\in B(\mathbf{0}, 2^{-j})}\lf|W^{1/p}(x)
\lf(\varphi_j\ast\vec{f}\r)(x+y)\r|^q\\
&\quad\sim\sup_{y\in B(\mathbf{0}, 2^{-j})}
\frac{|W^{1/p}(x)(\varphi_j\ast\vec{f})(x+y)|^q}
{(1+2^j|y|)^{aq}}\lesssim\lf[\lf(\varphi_j^*
\vec{f}\r)_a^{(W,p)}(x)\r]^q,
\end{align*}
which implies that \eqref{zb} holds true.
		
Next, we show that,
for any $\vec{f}\in[\cs_\infty'(\rn)]^m$,
\begin{equation}\label{231}
\lf\|\vec{f}\r\|
_{\dot{F}^{\alpha,q}_{p}(W)}^{\star}
\lesssim
\lf\|\vec{f}\r\|
_{\dot{F}^{\alpha,q}_{p}(W)}^{\square},
\end{equation}
if $a$ is sufficiently large.
Using \eqref{572},  to prove \eqref{231},
we only need to show that,
for any $\vec{f}\in[\cs_\infty'(\rn)]^m$,
\begin{equation}\label{23}
\lf\|\vec{f}\r\|
_{\dot{F}^{\alpha,q}_{p}(\{A_Q\})}^{\star}
\lesssim
\lf\|\vec{f}\r\|
_{\dot{F}^{\alpha,q}_{p}(W)}^{\square},
\end{equation}
if $a$ is sufficiently large.
For any given $A\in(0,1]$
satisfying that $q/A>1$ and $p/A>1$,
we choose an $a\in(0,\infty)$
sufficiently large
such that $A(a-r)>n$, where $r:=\frac{\beta}{p}$ and
$\beta$ is the doubling exponent of $W$.
Then, by \eqref{pg},  the change of variables,
the fact that, for any
$z\in Q_{j\ell}$ and $s\in B(\mathbf{0},2^{-j})$,
$z-s\in Q_{j(t+\ell)}$ for
some $t:=(t_1,\ldots,t_n)\in\zz^n$ satisfying
$|t|_{\infty}:=\max\{t_d:\ d\in\{1,\ldots,n\}\}\leq 1$,
and
Lemma \ref{wsz},
we conclude that,
for any $j\in\zz$,
$k\in\zz^n$, and $x\in Q_{jk}$,
\begin{align}\label{g2}
&\sup_{y\in\rn}
\frac{|A_{Q_{jk}}(\varphi_j\ast\vec{f})(y)|^A}
{(1+2^j|x-y|)^{aA}}
\\
&\quad\sim
\sum_{\ell\in\zz^n}
\lf(1+|k-\ell|\r)^{-A(a-r)}2^{jn}
\fint_{B(\mathbf{0}, 2^{-j})}
\int_{Q_{j\ell}}
\lf|A_{Q_{j\ell}}
\lf(\varphi_j\ast\vec{f}\r)(z)\r|^A\,dz\,ds
\noz\\
&\quad\lesssim
\sum_{\ell\in\zz^n}
\lf(1+|k-\ell|\r)^{-A(a-r)}2^{jn}\noz\\
&\qquad\times
\sum_{\{t\in\zz^n:\,|t|_{\infty}\leq 1\}}
\fint_{B(\mathbf{0}, 2^{-j})}\int_{Q_{j(\ell+t)}}
\lf|A_{Q_{j\ell}}
\lf(\varphi_j\ast\vec{f}\r)(s+z)\r|^A\,dz\,ds
\noz\\
&\quad\lesssim
\sum_{\ell\in\zz^n}
\lf(1+|k-\ell|\r)^{-A(a-r)}2^{jn}\noz\\
&\qquad\times\sum_{\{t\in\zz^n:\,|t|_{\infty}\leq 1\}}
\int_{Q_{j(\ell+t)}}
\fint_{B(\mathbf{0}, 2^{-j})}
\lf|A_{Q_{j\ell}}
\lf(\varphi_j\ast\vec{f}\r)(s+z)\r|^A\,ds\,dz
\noz\\
&\quad\lesssim
\sum_{\ell\in\zz^n}
\lf(1+|k-\ell|\r)^{-A(a-r)}2^{jn}\noz\\
&\qquad\times\sum_{\{t\in\zz^n:\,|t|_{\infty}\leq 1\}}
\int_{Q_{j(\ell+t)}}
\fint_{B(\mathbf{0}, 2^{-j})}
\lf|A_{Q_{j(\ell+t)}}
\lf(\varphi_j\ast\vec{f}\r)(s+z)\r|^A\,ds\,dz
\noz.
\end{align}

Now, we prove \eqref{23} by considering two cases on $p$.

Case 1) $p\in(0,1]$. In this case,
noticing that
$\mathbf{1}_{Q_{j(\ell+t)}}
=\sum_{Q\in\mathcal{D}_j}
(\mathbf{1}_Q\mathbf{1}_{Q_{j(\ell+t)}})$,
we then have
\begin{align}\label{gjr}
&\int_{Q_{j(\ell+t)}}2^{j\alpha}
\fint_{B(\mathbf{0}, 2^{-j})}
\lf|A_{Q_{j(\ell+t)}}
\lf(\varphi_j\ast\vec{f}\r)(s+z)\r|^A\,ds\,dz\\
&\quad=\int_{Q_{j(\ell+t)}}
\sum\limits_{Q\in\mathcal{D}_j}2^{j\alpha}
\fint_{B(\mathbf{0}, 2^{-j})}
\lf|A_Q\varphi_j\ast\vec{f}(s+z)\r|^A\,ds
\mathbf{1}_Q(z)\,dz\noz\\
&\quad
=\int_{Q_{j(\ell+t)}}g_j(z)\,dz,\noz
\end{align}
where, for any $z\in\rn$,
$$g_j(z):=\sum\limits_{Q\in\mathcal{D}_j}2^{j\alpha}
\fint_{B(\mathbf{0}, 2^{-j})}
\lf|A_Q\varphi_j\ast\vec{f}(s+z)\r|^A\,ds
\mathbf{1}_Q(z).$$
For any given $x\in Q_{jk}$, let
$B_x:=B(x_{k,\ell,t},r_{k,\ell,t})$
be the smallest ball containing both $x$ and
the dyadic cube $Q_{j(\ell+t)}$.
Then $r_{k,\ell,t} \sim2^{-j}(1+|k-\ell-t|)$.
Since $|t|_{\infty}\leq1$,
it follows that
\begin{equation}\label{rgx}
r_{k,\ell,t}\sim2^{-j}(1+|k-\ell|).
\end{equation}
Using this and \eqref{gjr},
we obtain, for any $x\in Q_{jk}$,
\begin{align}\label{g1}
&\int_{Q_{j(\ell+t)}}2^{j\alpha}
\fint_{B(\mathbf{0}, 2^{-j})}
\lf|A_{Q_{j(\ell+t)}}\lf(\varphi_j\ast\vec{f}\r)(s+z)
\r|^A\,ds\,dz\\
&\quad\leq
\int_{B_x}g_j(z)\,dz
\lesssim
2^{-jn}(1+|k-\ell|)^n\mathcal{M}(g_j)(x)\noz.
\end{align}
By both \eqref{g2} and \eqref{g1},
we conclude that,
for any $x\in\rn$,
\begin{align}\label{gz2}
&2^{j\alpha}\sum_{Q\in \mathcal{D}_j}
\sup_{y\in\rn}
\frac{|A_{Q}(\varphi_j\ast\vec{f})(y)|^A}
{(1+2^j|x-y|)^{aA}}
\mathbf{1}_{Q}(x)\\
&\quad\lesssim\sum_{\ell\in\zz^n}
(1+|k-\ell|)^{-A(a-r)+n}
\mathcal{M}(g_j)(x)
\lesssim\mathcal{M}(g_j)(x)\noz,
\end{align}
where,
in the last step,
we used the assumption  $A(a-r)>2n$.
From \eqref{gz2},  we further deduce that,
for any $x\in\rn$,
\begin{equation}\label{gz}
\sum_{Q\in \mathcal{D}_j}
\lf[2^{j\alpha}
\sup_{y\in\rn}
\frac{|A_{Q}(\varphi_j\ast\vec{f})(y)|}
{(1+2^j|x-y|)^{a}}
\mathbf{1}_{Q}(x)\r]^q
\quad\lesssim
\lf[\mathcal{M}(g_j)(x)\r]^{q/A}.
\end{equation}
By \eqref{gz},
the Fefferman--Stein
vector-valued maximal inequality together
with $p/A>1$ and $q/A>1$,
the H\"older inequality, and Lemma \ref{ainf},
we find that,
for any $\vec{f}\in[\cs'_\infty(\rn)]^m$,
\begin{align*}
\lf\|\vec{f}\r\|
_{\dot{F}^{\alpha,q}_{p}(\{A_Q\})}^\star
&\lesssim
\lf\|\lf\{\sum_{j\in\zz}
\lf[\mathcal{M}(g_j)\r]^{q/A}\r\}^{A/q}\r\|
_{L^{p/A}(\rn)}^{1/A}\\
&\lesssim\lf\|\lf\{\sum_{j\in\zz}
\sum\limits_{Q\in\mathcal{D}_j}2^{j\alpha q}
\lf[\fint_{B(\mathbf{0}, 2^{-j})}
\lf|A_Q\varphi_j\ast\vec{f}(\cdot+z)\r|^A\,dz\r]^{q/A}
\mathbf{1}_Q\r\}^{1/q}\r\|_{L^p(\rn)}\\
&\lesssim
\lf\|\lf[\sum_{j\in\zz}
\sum\limits_{Q\in\mathcal{D}_j}
2^{j\alpha q}
\fint_{B(\mathbf{0}, 2^{-j})}
\lf|A_Q\varphi_j\ast\vec{f}(\cdot+z)\r|^q\,dz
\mathbf{1}_Q\r]^{1/q}\r\|_{L^p(\rn)}\\
&\lesssim
\lf\|\lf[\sum_{j\in\zz}
\sum\limits_{Q\in\mathcal{D}_j}2^{j\alpha q}
\fint_{B(\mathbf{0}, 2^{-j})}
\lf\|A_QW^{-1/p}(\cdot)\r\|^q
\lf|W^{1/p}(\cdot)\varphi_j\ast\vec{f}(\cdot+z)\r|^q\,dz
\mathbf{1}_Q\r]^{1/q}\r\|_{L^p(\rn)}\\
&\lesssim
\lf\|\vec{f}\r\|
_{\dot{F}^{\alpha,q}_{p}(W)}^{\square}.
\end{align*}
Thus, \eqref{23} holds true when $p\in(0,1]$.
	
Case 2) $p\in(1,\infty)$. In this case,
from \eqref{g2},
the H\"older inequality,
and  Lemma \ref{a123},
we infer that,
for any $x\in Q_{jk}$,
\begin{align}\label{g3}
&\sup_{y\in\rn}
\frac{|A_{Q_{jk}}(\varphi_j\ast\vec{f})(y)|^A}
{(1+2^j|x-y|)^{aA}}\\
&\quad\lesssim
\sum_{\ell\in\zz^n}
\lf(1+|k-\ell|\r)^{-A(a-r)}2^{jn}
\sum_{\{t\in\zz^n:\,|t|_{\infty}\leq 1\}}
\Bigg[\int_{Q_{j(\ell+t)}}
\lf\|A_{Q_{j(\ell+t)}}W^{-1/p}(z)\r\|^A\noz\\
&\qquad\times\fint_{B(\mathbf{0}, 2^{-j})}
\lf|W^{1/p}(z)
\lf(\varphi_j\ast\vec{f}\r)(s+z)\r|^A\,ds\,dz
\Bigg]\noz\\
&\quad\lesssim
\sum_{\ell\in\zz^n}
\lf(1+|k-\ell|\r)^{-A(a-r)}2^{jn}
\sum_{\{t\in\zz^n:\,|t|_{\infty}\leq 1\}}
\left\{\lf[\int_{Q_{j(\ell+t)}}
\lf\|A_{Q_{j(\ell+t)}}
W^{-1/p}(z)\r\|^{p'}\,dz\r]^{A/p'}\right.\noz\\
&\qquad\left.\times\lf[
\int_{Q_{j(\ell+t)}}\lf\{
\fint_{B(\mathbf{0}, 2^{-j})}
\lf|W^{1/p}(z)\lf(\varphi_j
\ast\vec{f}\r)(s+z)\r|^A\,ds\r\}
^{\frac{p'}{p'-A}}\,dz\r]
^{\frac{p'-A}{p'}}\right\}\noz\\
&\quad\lesssim
\sum_{\ell\in\zz^n}
\lf(1+|k-\ell|\r)
^{-A(a-r)}
2^{jn(1-\frac{A}{p'})}\noz\\
&\qquad\times
\sum_{\{t\in\zz^n:\,|t|_
{\infty}\leq 1\}}\lf\{\int_{Q_{j(\ell+t)}}
\lf[\fint_{B(\mathbf{0}, 2^{-j})}
\lf|W^{1/p}(z)\lf(\varphi_j\ast
\vec{f}\r)(s+z)\r|^A\,ds\r]
^{\frac{p'}{p'-A}}\,dz\r\}
^{\frac{p'-A}{p'}}\noz.
\end{align}
For any given $x\in\rn$,
let $B_x:=B(x_{k,\ell,t},r_{k,\ell,t})$ be the same as in
Case 1). Notice that,
for any $M>n$, $$\sup_{k\in\zz^n}\sum_{\ell\in\zz^n}
(1+|k-\ell|)^{-M}=\sup_{k\in\zz^n}
\sum_{k-\ell\in\zz^n}(1+|k-\ell|)^{-M}=
\sum_{\ell\in\zz^n}(1+|\ell|)^{-M}\lesssim1.$$
By this, \eqref{g3},  \eqref{rgx},
the H\"older inequality,
and the disjointness of $Q_{jk}$ for any $k\in\zz^n$,
we conclude that, for any $x\in\rn$,
\begin{align}\label{g4}
&\sum_{k\in\zz^n}
\sup_{y\in\rn}
\frac{|A_{Q_{jk}}(\varphi_j\ast\vec{f})(y)|^q}
{(1+2^j|x-y|)^{aq}}
\mathbf{1}_{Q_{jk}}(x)\\
&\quad\lesssim
\lf[\sum_{k\in\zz^n}
\sum_{\ell\in\zz^n}
\lf(1+|k-\ell|\r)^{-A(a-r)}2^{jn(1-\frac{A}{p'})}\noz\r.\\
&\qquad\lf.\times
\lf\{\int_{B_x}
\lf[\fint_{B(\mathbf{0}, 2^{-j})}
\lf|W^{1/p}(z)\lf(\varphi_j\ast\vec{f}\r)(s+z)\r|^A\,ds\r]
^{\frac{p'}{p'-A}}\,dz\r\}
^{\frac{p'-A}{p'}}
\mathbf{1}_{Q_{jk}}(x)\r]^{q/A}\noz\\
&\quad\lesssim
\lf\{\sum_{k\in\zz^n}
\sum_{\ell\in\zz^n}
\lf(1+|k-\ell|\r)^{-A(a-r)
+\frac{(p'-A)n}{p'}}\r.\noz\\
&\qquad\lf.\times
\lf[\mathcal{M}\lf(\lf[\fint_{B(\mathbf{0}, 2^{-j})}
\lf|W^{1/p}(\cdot)
\lf(\varphi_j\ast\vec{f}\r)(\cdot+z)\r|^A\,dz\r]
^{\frac{p'}{p'-A}}\r)(x)\r]
^{\frac{p'-A}{p'}}\mathbf{1}_{Q_{jk}}(x)\r\}^{q/A}\noz\\
&\quad\lesssim\lf\{\mathcal{M}
\lf(\lf[\fint_{B(\mathbf{0}, 2^{-j})}
\lf|W^{1/p}(\cdot)
\lf(\varphi_j\ast\vec{f}\r)(\cdot+z)\r|^A\,dz\r]
^{\frac{p'}{p'-A}}\r)(x)\r\}^{\frac{(p'-A)q}{Ap'}}\noz\\
&\qquad\times\lf[ \sup_{k\in\zz^n}\sum_{\ell\in\zz^n}(1+|k-\ell|)
^{-A(a-r)+\frac{(p'-A)n}{p'}}\r]^{q/A}\noz\\
&\quad\lesssim
\lf\{\mathcal{M}
\lf(\lf[\fint_{B(\mathbf{0}, 2^{-j})}
\lf|W^{1/p}(\cdot)
\lf(\varphi_j\ast\vec{f}\r)(\cdot+z)\r|^A\,dz\r]
^{\frac{p'}{p'-A}}\r)(x)\r\}^{\frac{(p'-A)q}{Ap'}},\noz
\end{align}
where, in the last step,
we chose a sufficiently large $a\in(0,\infty)$
such that $A(a-r)-\frac{(p'-A)n}{p'}>n$.
Noticing that
$\frac{p(p'-A)}{Ap'}=\frac{A+(1-A)p}{A}>1$,
choose
$A\in(0,1)$ sufficiently small,
and hence $\frac{(p'-A)q}{Ap'}>1$
and $q/A>1$.
From this, \eqref{g4},  the Fefferman--Stein
vector-valued maximal inequality,
and the H\"older inequality,
we deduce that,
for any $\vec{f}\in[\cs'_\infty(\rn)]^m$,
\begin{align*}
&\lf\|\vec{f}\r\|
_{\dot{F}^{\alpha,q}_{p}(\{A_Q\})}^\star\\
&\quad\lesssim
\lf\|\lf\{\sum_{j\in\zz}2^{j\alpha q}
\lf[\mathcal{M}\lf(\lf[
\fint_{B(\mathbf{0}, 2^{-j})}
\lf|W^{1/p}(\cdot)
\lf(\varphi_j\ast\vec{f}\r)(\cdot+z)\r|^A\,dz\r]
^{\frac{p'}{p'-A}}\r)\r]
^{\frac{(p'-A)q}{Ap'}}\r\}
^{\frac{1}{q}}\r\|_{L^p(\rn)}\\
&\quad\sim
\lf\|\lf\{\sum_{j\in\zz}
\lf[\mathcal{M}
\lf(\lf[\fint_{B(\mathbf{0}, 2^{-j})}
\lf|2^{j\alpha}W^{1/p}(\cdot)
\lf(\varphi_j\ast\vec{f}\r)(\cdot+z)\r|^A\,dz\r]
^{\frac{p'}{p'-A}}\r)\r]
^{\frac{(p'-A)q}{Ap'}}\r\}
^{\frac{Ap'}{(p'-A)q}}\r\|
_{L^{\frac{p(p'-A)}{p'A}}(\rn)}\\
&\quad\lesssim
\lf\|\lf\{\sum_{j\in\zz}
2^{j\alpha q}
\lf[\fint_{B(\mathbf{0}, 2^{-j})}
\lf|W^{1/p}(x)
\lf(\varphi_j\ast\vec{f}\r)(\cdot+z)\r|^A\,dz\r]
^{q/A}\r\}^{1/q}\r\|_{L^p(\rn)}\\
&\quad\lesssim
\lf\|\lf\{\sum_{j\in\zz}
2^{j\alpha q}
\lf[\fint_{B(\mathbf{0}, 2^{-j})}
\lf|W^{1/p}(x)
\lf(\varphi_j\ast\vec{f}\r)(\cdot+z)\r|^q
\,dz\r]\r\}^{1/q}\r\|_{L^p(\rn)}\\
&\quad\sim
\lf\|\vec{f}\r\|
_{\dot{F}^{\alpha,q}_{p}(W)}^\square.
\end{align*}
Thus, \eqref{23} holds true when $p\in(0,\infty)$.

Combining both Cases 1) and 2), we conclude that \eqref{23}
holds true.
From \eqref{231},  \eqref{572},
and Theorem \ref{pmf}, we infer that \eqref{lusind}
holds true for any
$\vec{f}\in[\cs'_{\infty}(\rn)]^m$, which then completes
the proof of Theorem \ref{lusin}.
\end{proof}

\begin{remark}\label{lr}
Theorem
\ref{lusin} when $m=1$ and $W=1$ is just
\cite[Theorem 2.12.1]{T1}
which is the Lusin-area function
characterization of
Triebel--Lizorkin spaces.
\end{remark}

In what follows,
we establish the $g_\lambda^*-$
function characterization of
$\dot{F}^{\alpha,q}_{p}(W)$.
First, we give the following technical lemma.

\begin{lemma}\label{a.e.}
Let $\varphi\in\cs(\rn)$
with $\supp\widehat{\varphi}$
being bounded and
away from the origin.
Then,
for any $f\in\cs_\infty'(\rn)$,
$\varphi\ast f\in C^{\infty}(\rn)\cap\cs'(\rn)$
and
$\supp(\varphi\ast f)^{\land}
\subseteq\supp\widehat{\varphi}.$
\end{lemma}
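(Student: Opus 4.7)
The plan is to interpret $\varphi\ast f$ by choosing any representative of $f$ in $\cs'(\rn)$ and then invoke the classical theory of convolution of Schwartz functions with tempered distributions. The hypothesis that $\supp\widehat{\varphi}$ is bounded away from the origin means $\widehat{\varphi}$ vanishes in a neighborhood of $\mathbf{0}$, and hence $\partial^\gamma\widehat{\varphi}(\mathbf{0})=0$ for every $\gamma\in\zz_+^n$. By Fourier duality, this is equivalent to $\int_\rn x^\gamma\varphi(x)\,dx=0$ for every $\gamma\in\zz_+^n$, i.e., $\varphi\in\cs_\infty(\rn)$.

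First I would verify that $\varphi\ast P\equiv 0$ for every $P\in\cp(\rn)$: writing $(\varphi\ast P)(x)=\int_\rn\varphi(y)P(x-y)\,dy$ and expanding $P(x-y)$ as a finite Taylor polynomial in $y$, each term becomes a product of some $x^\alpha$ with a vanishing moment $\int_\rn y^\beta\varphi(y)\,dy=0$. Since $\cs'_\infty(\rn)=\cs'(\rn)/\cp(\rn)$, this shows that $\varphi\ast f:=\varphi\ast\widetilde f$ does not depend on the choice of representative $\widetilde f\in\cs'(\rn)$ of $f$. Next, the classical result that, for any $\widetilde f\in\cs'(\rn)$ and $\varphi\in\cs(\rn)$, the convolution $\varphi\ast\widetilde f$ is a $C^\infty$ function of polynomial growth (see, e.g., \cite{G1}) yields $\varphi\ast f\in C^{\infty}(\rn)\cap\cs'(\rn)$.

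For the Fourier support statement, I would appeal to the identity $(\varphi\ast\widetilde f)^{\wedge}=(2\pi)^{n/2}\widehat{\varphi}\,\widehat{\widetilde f}$ in $\cs'(\rn)$, which is meaningful because $\widehat{\varphi}\in C_{\c}^{\infty}(\rn)$ can multiply any tempered distribution. The general fact that the support of a distribution multiplied by a smooth function is contained in the support of the function then gives $\supp(\varphi\ast f)^{\land}\subseteq\supp\widehat{\varphi}$.

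The only subtlety is to check that this Fourier-side identity is independent of the representative $\widetilde f$; this is automatic from the previous step, but can also be seen directly: replacing $\widetilde f$ by $\widetilde f+P$ for some $P\in\cp(\rn)$ changes $\widehat{\widetilde f}$ by a finite linear combination of derivatives of $\delta_{\mathbf{0}}$, and these are annihilated upon multiplication by $\widehat{\varphi}$, since $\widehat{\varphi}$ vanishes in a neighborhood of $\mathbf{0}$. I do not expect any serious obstacle here; the argument is essentially bookkeeping about quotienting by polynomials combined with standard Fourier-analytic facts.
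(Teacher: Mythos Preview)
Your argument is correct, and it takes a genuinely different route from the paper's. The paper works \emph{intrinsically} with $f$ as a functional on $\cs_\infty(\rn)$: it first observes that $\widetilde{\varphi}\ast\psi\in\cs_\infty(\rn)$ for every $\psi\in\cs(\rn)$, defines $\langle\varphi\ast f,\psi\rangle:=\langle f,\widetilde{\varphi}\ast\psi\rangle$, and then bounds this pairing by Schwartz seminorms of $\psi$ via an explicit convolution estimate, thereby placing $\varphi\ast f$ in $\cs'(\rn)$; the Fourier support is then obtained by testing against $\gamma\in\cs(\rn)$ with $\supp\gamma\subset\rn\setminus\supp\widehat{\varphi}$ and computing $\langle(\varphi\ast f)^\wedge,\gamma\rangle=\langle f,\widetilde{(\widehat{\varphi}\gamma)^\vee}\rangle=0$. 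Your approach instead \emph{lifts} to a representative $\widetilde f\in\cs'(\rn)$ via the identification $\cs'_\infty(\rn)=\cs'(\rn)/\cp(\rn)$, checks that $\varphi\ast P=0$ for polynomials so the lift is well defined, and then imports the classical facts that $\varphi\ast\widetilde f$ is a $C^\infty$ function of polynomial growth and that $(\varphi\ast\widetilde f)^\wedge=(2\pi)^{n/2}\widehat{\varphi}\,\widehat{\widetilde f}$. Your route is shorter and delivers the $C^\infty$ conclusion directly from the classical theory; the paper's route avoids any choice of representative and instead verifies the tempered-distribution bound by hand (and in fact only records $L^1_{\loc}$ at the end, appealing to the Paley--Wiener-type result \cite[Theorem 2.3.21]{G1}, though smoothness follows the same way). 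It would be worth noting explicitly that your definition via representatives agrees with the adjoint definition implicit in the paper, which is immediate since $\widetilde{\varphi}\ast\psi\in\cs_\infty(\rn)$.
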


\begin{proof}
Since $\overline{\supp\widehat{\varphi}}$
is bounded and away from the origin,
then we deduce that,
for any
$\psi\in\cs(\rn)$, $\overline{\supp(\widehat{\varphi}\widehat{\psi})}
\subseteq\overline{\supp\widehat{\varphi}}$
is bounded and away from the origin,
which implies that
\begin{equation}\label{sw}
\varphi\ast\psi\in\cs_\infty(\rn).
\end{equation}
By an argument similar to that used in the proof
 of \cite[Proposition 2.3.4(b)]{G1},
we find that, if $f\in\cs_{\infty}'(\rn)$, then
there exists a positive constant $C$ and
$k,\ell\in\zz_+$ such that, for any $\phi\in\cs_{\infty}(\rn)$,
$$\lf|\lf\langle f,\phi\r\rangle\r|\leq C \sum_{\{\mu,\,\nu\in\zz_+^n:
\,|\mu|\leq k,\,|\nu|\leq\ell\}}\rho_{\mu,\nu}(\phi),$$
where
\begin{equation}\label{rhocs}
\rho_{\mu,\nu}(\phi):=
\sup\limits_{x\in\rn}|x^\mu\partial^\nu\phi(x)|.
\end{equation}
From this,  $f\in\cs'_{\infty}(\rn)$, and \eqref{sw},
we infer that
there exist $k,\ell\in\zz_+$ such that, for any $\psi\in\cs(\rn)$,
\begin{align*}
\lf|\langle \varphi\ast f, \psi\rangle\r|
&=\lf|\langle f,\widetilde{\varphi}\ast\psi\rangle\r|\\
&\lesssim\sum_{\{\mu,\,\nu\in\zz_+^n:\,|\mu|\leq k,|\nu|\leq \ell\}}
\rho_{\mu,\nu}(\widetilde{\varphi}\ast\psi)\\
&\lesssim\sum_{\{\mu,\,\nu\in\zz_+^n:\,|\mu|\leq k,|\nu|\leq \ell\}}
\sup_{x\in\rn}
\int_\rn\sum_{|\mu'|\leq|\mu|}
\lf|x-y\r|^{|\mu'|}|y|^{|\mu|-|\mu'|}
\lf|\partial_x^\nu\psi(x-y)\r||\widetilde{\varphi}(y)|\,dy\\
&\lesssim_{\varphi}\sum_{\{\mu,\,\nu\in\zz_+^n:\,|\mu|\leq k,|\nu|\leq
\ell\}}
\rho_{\mu,\nu}(\psi),
\end{align*}
where, in the last inequality, the implicit positive constant depends on $\varphi$,
which implies that
$\varphi\ast f\in\cs'(\rn)$.
By this, we conclude that,
for any $\gamma\in\cs(\rn)$
with $\supp{\gamma}
\subset(\rn\setminus\supp\widehat{\varphi})$,
$$
\lf\langle \lf(\varphi\ast f\r)^\land,\gamma\r\rangle
=\lf\langle \varphi\ast f,\widehat\gamma\r\rangle
=\lf\langle f, \widetilde{\varphi}\ast\widehat{\gamma}\r\rangle
=\lf\langle f,\widetilde{\lf(\widehat{\varphi}\gamma\r)^\vee}\r\rangle
=0,
$$
which implies that
$\supp(\varphi\ast f)^{\land}
\subseteq\supp\widehat{\varphi}.$
Then, from \cite[Theorem 2.3.21]{G1},
we deduce that
$\varphi\ast f\in L^1_{\loc}(\rn)$,
which completes the proof of Lemma \ref{a.e.}.
\end{proof}

\begin{theorem}\label{glambda}
Let  $\alpha\in\rr$,
$p\in(0,\infty)$, $q\in(0,\infty]$,
$W\in A_p(\rn,\mathbb{C}^m)$,
and $\lambda\in(\frac{1}{\min\{1,p,q\}}
+\frac{\beta}{np},\infty)$, where $\beta$ is the
doubling exponent of $W$.
Assume that $\varphi\in\cs(\rn)$
satisfies both (T1) and (T2) of Definition \ref{FBW}.
Then $\vec{f}\in\dot{F}^{\alpha,q}_{p}(W)$
if and only if
$\vec{f}\in[\cs'_\infty(\rn)]^m$
and $\|\vec{f}\|_{\dot{F}^{\alpha,q}_{p}(W)}
^{\clubsuit}<\infty$,
where
\begin{equation}\label{gl*}
\lf\|\vec{f} \r\|
_{\dot{F}^{\alpha,q}_{p}(W)}^\clubsuit
:=\lf\|\lf\{\sum_{j\in\zz}2^{j\alpha q}
2^{jn}\int_\rn
\frac{|W^{1/p}(\cdot)
(\varphi_j\ast\vec{f})(y)|^q}{(1+2^j|\cdot-y|)^{\lambda nq}}
\,dy\r\}
^{1/q}\r\|_{L^p(\rn)}
\end{equation}
with usual modification made when $q=\infty$.
Moreover, for any $\vec{f}\in[\cs_{\infty}'(\rn)]^m$,
\begin{equation}\label{glambdad}
\lf\|\vec f\r\|_{\dot{F}^{\alpha,q}_{p}(W)}\sim\lf\|\vec{f} \r\|
_{\dot{F}^{\alpha,q}_{p}(W)}^\clubsuit,
\end{equation}
where the positive equivalence constants are independent of $\vec{f}$.
\end{theorem}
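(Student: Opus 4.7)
The plan is to bracket $\|\vec f\|^\clubsuit_{\dot F^{\alpha,q}_p(W)}$ between the Lusin-area quasi-norm of Theorem \ref{lusin} and the Peetre quasi-norm of Theorem \ref{pmf}, so that \eqref{glambdad} will be a consequence of those two theorems. For the easy inclusion $\|\vec f\|_{\dot F^{\alpha,q}_p(W)}\lesssim \|\vec f\|^\clubsuit_{\dot F^{\alpha,q}_p(W)}$ I would observe that for any $y\in B(x,2^{-j})$ one has $1+2^j|x-y|\le 2$, so
$$\fint_{B(x,2^{-j})}\left|W^{1/p}(x)\varphi_j\ast\vec f(y)\right|^q\,dy\lesssim 2^{jn}\int_\rn\frac{\left|W^{1/p}(x)\varphi_j\ast\vec f(y)\right|^q}{(1+2^j|x-y|)^{\lambda nq}}\,dy,$$
giving $\|\vec f\|^\square_{\dot F^{\alpha,q}_p(W)}\lesssim \|\vec f\|^\clubsuit_{\dot F^{\alpha,q}_p(W)}$, after which Theorem \ref{lusin} closes the argument.

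For the converse $\|\vec f\|^\clubsuit_{\dot F^{\alpha,q}_p(W)}\lesssim \|\vec f\|_{\dot F^{\alpha,q}_p(W)}$, I plan a splitting argument modelled on Theorem \ref{lusin}. The hypothesis $\lambda>1/\min\{1,p,q\}+\beta/(np)$ is exactly what is needed to choose $A\in(0,1]$ satisfying $1/(\lambda-\beta/(np))<A<\min\{1,p,q\}$. I would write $|W^{1/p}(x)\varphi_j\ast\vec f(y)|^q=|W^{1/p}(x)\varphi_j\ast\vec f(y)|^{q-A}\cdot|W^{1/p}(x)\varphi_j\ast\vec f(y)|^A$ and bound the $(q-A)$-factor by the matrix Peetre maximal function with exponent $a=\lambda n$, which is admissible in Theorem \ref{pmf} precisely because $\lambda n>n/\min\{1,p,q\}+\beta/p$. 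For the remaining $A$-factor inside the integral, for $x\in Q_{jk}$ I would use the reducing-operator estimate $|W^{1/p}(x)\varphi_j\ast\vec f(y)|^A\le\|W^{1/p}(x)A_{Q_{jk}}^{-1}\|^A|A_{Q_{jk}}\varphi_j\ast\vec f(y)|^A$, decompose $\rn=\bigsqcup_s Q_{js}$, and combine the weak-doubling bound of Lemma \ref{wsz} with the geometric relation $1+2^j|x-y|\sim 1+|k-s|$ and a dyadic-annulus summation in the spirit of Lemma \ref{jcf} to obtain
$$2^{jn}\int_\rn\frac{\left|W^{1/p}(x)\varphi_j\ast\vec f(y)\right|^A}{(1+2^j|x-y|)^{\lambda nA}}\,dy\lesssim\gamma_j(x)^A\,\mathcal M(G_j)(x),$$
where $\gamma_j(x):=\sum_{Q\in\mathcal D_j}\|W^{1/p}(x)A_Q^{-1}\|\mathbf 1_Q(x)$ and $G_j(y):=\sum_{s\in\zz^n}|A_{Q_{js}}\varphi_j\ast\vec f(y)|^A\mathbf 1_{Q_{js}}(y)$; the geometric sum converges because $A(\lambda n-\beta/p)>n$.

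Multiplying through by $2^{j\alpha q}$ and then applying H\"older in $j\in\zz$ with exponents $q/(q-A)$ and $q/A$, followed by H\"older in $L^p(\rn)$ with exponents $pq/(q-A)$ and $pq/A$, should yield
$$\left\|\vec f\right\|^\clubsuit_{\dot F^{\alpha,q}_p(W)}\lesssim\left(\left\|\vec f\right\|^\star_{\dot F^{\alpha,q}_p(W)}\right)^{(q-A)/q}\left\|\left(\sum_{j\in\zz}2^{j\alpha q}\gamma_j^q[\mathcal M(G_j)]^{q/A}\right)^{1/q}\right\|_{L^p(\rn)}^{A/q}.$$
Theorem \ref{pmf} controls the first factor by $\|\vec f\|_{\dot F^{\alpha,q}_p(W)}^{(q-A)/q}$. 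For the second factor I plan to apply the Fefferman--Stein vector-valued maximal inequality of Lemma \ref{lm1} with exponents $p/A>1$ and $q/A>1$, absorbing the $\gamma_j$ factor through the two-case analysis already used in the proof of Theorem \ref{lusin}: when $p\le 1$, Lemma \ref{ainf} controls $\gamma_j$ pointwise on each cube, while when $p>1$ an additional H\"older step in the exponent $p'$ combined with Lemma \ref{a123} absorbs $\gamma_j$ into an integral of the $\{A_Q\}$-factor. Either case produces a bound by $\|\vec f\|_{\dot F^{\alpha,q}_p(\{A_Q\})}^{A/q}$, which by Lemma \ref{eq} is equivalent to $\|\vec f\|_{\dot F^{\alpha,q}_p(W)}^{A/q}$, and multiplying the two factors gives the desired inequality.

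The main obstacle is the sharpness of the $\lambda$-range. A direct pointwise Peetre bound applied to the full integrand would require simultaneously $a<\lambda n-n/q$ and $a>n/\min\{1,p,q\}+\beta/p$, forcing the strictly stronger threshold $\lambda>1/\min\{1,p,q\}+\beta/(np)+1/q$. The $A$-splitting, by shifting the ``$1/q$-loss'' from the Peetre side to the Fefferman--Stein side where it is recovered by taking $A$ small, is precisely what closes this gap; the admissible window for $A$ is dictated by the sharp hypothesis on $\lambda$, and the delicate matching of the Fefferman--Stein exponents with the matrix-weight factor $\gamma_j$ is the technical heart of the proof.
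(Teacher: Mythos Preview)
Your easy direction (via Theorem \ref{lusin}) matches the paper exactly. The hard direction contains a genuine gap: the factor $\gamma_j(x)=\|W^{1/p}(x)A_Q^{-1}\|$ you introduce cannot be removed by Lemmas \ref{ainf} or \ref{a123}. Those lemmas bound the \emph{opposite} quantity $\|A_QW^{-1/p}(x)\|$, which is what appears in the proof of Theorem \ref{lusin} when one goes from $A_Q$ back to $W^{1/p}$. Your $\gamma_j$ points the other way and admits no pointwise (or $L^{p'}$-average) bound---already in the scalar case this would demand $w(x)\lesssim\fint_Q w$. The only tool that handles $\gamma_j$ is Lemma \ref{c38}, but that lemma requires the companion function to be constant on each cube of $\mathcal D_j$; your $[\mathcal M(G_j)]^{1/A}$ is not, so the lemma does not apply at the stage where you invoke it.

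The paper circumvents this by reversing the order of operations. It \emph{first} passes from $\|\vec f\|^\clubsuit_{\dot F_p^{\alpha,q}(W)}$ to an auxiliary $\{A_Q\}$-version $\|\vec f\|^\clubsuit_{\dot F_p^{\alpha,q}(\{A_Q\})}$ built with a $\sup_{z\in Q}$ so that each $j$-th term is constant on the cubes of $\mathcal D_j$; Lemma \ref{c38} is legitimately applied at this step. Only then, working entirely with $A_Q$, does the paper carry out the $A$-splitting, but via a pointwise sub-mean-value inequality
\[
\sup_{v}\frac{|A_Q(\varphi_j\ast\vec f)(v)|}{(1+2^j|z-v|)^{\lambda n}}\lesssim\left[2^{jn}\int_{\rn}\frac{|A_Q(\varphi_j\ast\vec f)(y)|^A}{(1+2^j|z-y|)^{\lambda nA}}\,dy\right]^{1/A},
\]
which converts the full $q$-integral into the $(q/A)$-th power of a single $A$-integral \emph{with no residual $\gamma_j$}. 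Weak doubling and Lemma \ref{jcf} then reduce this to $\mathcal M(G_j)$, and Fefferman--Stein finishes against $\|\vec f\|_{\dot F_p^{\alpha,q}(\{A_Q\})}\sim\|\vec f\|_{\dot F_p^{\alpha,q}(W)}$. Your H\"older-in-$j$ route can be made to work, but only after this reordering so that no $\gamma_j$ survives alongside the maximal function.
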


\begin{proof}
Let all the symbols be the same as in  the present theorem.
First, we prove that, for any $\vec{f}\in[\cs'_{\infty}(\rn)]^m$,
\begin{equation}\label{wxg}
\lf\|\vec f\r\|
_{\dot{F}^{\alpha,q}_{p}(W)}
\lesssim\lf\|\vec f\r\|
_{\dot{F}^{\alpha,q}_{p}(W)}^\clubsuit.
\end{equation}
Indeed, by an observation that, for any  $x\in\rn$ and  $y\in B(x, 2^{-j})$,
$1+2^j|x-y|\sim 1$, we conclude that,
for any $x\in\rn$ and $\vec{f}\in[\cs'_{\infty}(\rn)]^m$,
\begin{align*}
&\fint_{B(x, 2^{-j})}
\lf|W^{1/p}(x)
\lf(\varphi_j\ast\vec{f}\r)(y)\r|^q\,dy\\
&\quad\sim 2^{jn}\int_{B(x,2^{-j})}\frac{|W^{1/p}(x)
(\varphi_j\ast\vec{f})(y)|^q}{(1+2^j|x-y|)^{\lambda nq}}
\,dy\lesssim 2^{jn}\int_{\rn}\frac{|W^{1/p}(x)
(\varphi_j\ast\vec{f})(y)|^q}{(1+2^j|x-y|)^{\lambda nq}}
\,dy,
\end{align*}
which implies that
$\|\vec f\|
_{\dot{F}^{\alpha,q}_{p}(W)}^\square
\lesssim\|\vec f\|
_{\dot{F}^{\alpha,q}_{p}(W)}^\clubsuit$.
From this and Theorem \ref{lusin}, we infer that
\eqref{wxg} holds true.
Thus, to complete the proof of Theorem \ref{glambda},
it remains to show that,
for any $\vec{f}\in[\cs'_{\infty}(\rn)]^m$,
\begin{equation}\label{wxgy}
\lf\|\vec f\r\|
_{\dot{F}^{\alpha,q}_{p}(W)}^\clubsuit\lesssim\lf\|\vec f\r\|
_{\dot{F}^{\alpha,q}_{p}(W)}.
\end{equation}
Let
$\{A_Q\}_{Q\in\mathcal{D}}$
be a sequence of reducing operators of order $p$ for $W$.
For any $\vec{f}\in[\cs'_{\infty}(\rn)]^m$, let
\begin{equation}\label{gq}
\lf\|\vec {f}\r\|
_{\dot{F}^{\alpha,q}_{p}(\{A_Q\})}^\clubsuit:=
\lf\|\lf\{\sum_{j\in\zz}
\sum_{Q\in\mathcal{D}_j}
2^{j\alpha q}2^{jn}
\sup_{z\in Q}\int_\rn
\frac{|A_Q(\varphi_j\ast\vec{f})(y )|^q}
{(1+2^j|z-y|)^{\lambda nq}}\,dy
\mathbf{1}_Q\r\}^{1/q}\r\|_{L^p(\rn)}.
\end{equation}
To prove \eqref{wxgy},  we first show that,
for any $\vec{f}\in[\cs_\infty'(\rn)]^m$,
\begin{equation}\label{gqw}
\lf\|\vec f\r\|
_{\dot{F}^{\alpha,q}_{p}(W)}^{\clubsuit}
\lesssim
\lf\|\vec f\r\|
_{\dot{F}^{\alpha,q}_{p}(\{A_Q\})}^\clubsuit.
\end{equation}
Indeed, for any given $p\in(0,\infty)$
and $q\in(0,\infty]$, and for any $x\in\rn$ and
$j\in\zz$,
let
$$
\gamma_j(x):=
\sum_{Q\in\mathcal{D}_j}\lf\|W^{1/p}(x)A_Q^{-1}\r\|
\mathbf{1}_Q(x),
$$

$$
h_j(x):=
2^{j\alpha }2^{jn/q}
\lf[\int_\rn
\frac{|W^{1/p}(x)(\varphi_j\ast\vec{f})(y)|^q}
{(1+2^j|x-y|)^{\lambda nq}}\,dy\r]^{1/q},
$$
and
$$
f_j(x):=
\sum_{Q\in\mathcal{D}_j}
\lf|Q\r|^{-\alpha/n}2^{jn/q}
\lf[\sup_{z\in Q}\int_\rn
\frac{|A_Q(\varphi_j\ast\vec{f})(y)|^q}
{(1+2^j|z-y|)^{\lambda nq}}\,dy\r]^{1/q}
\mathbf{1}_Q(x).
$$
It is obvious that,
for any $j\in\zz$ and $x\in\rn$,
\begin{align}\label{gkk}
h_j(x)
&=\sum_{Q\in\mathcal{D}_j}
2^{j\alpha }2^{jn/q}
\lf[\int_\rn
\frac{|W^{1/p}(x)A_Q^{-1} A_Q(\varphi_j\ast\vec{f})(y)|^q}
{(1+2^j|x-y|)^{\lambda nq}}\,dy\r]^{1/q}
\mathbf{1}_Q(x)\\
&\leq\sum_{Q\in\mathcal{D}_j}
\lf|Q\r|^{-\alpha/n}2^{jn/q}
\lf\|W^{1/p}(x)A_Q^{-1}\r\|
\lf[\int_\rn\frac{|A_Q(\varphi_j\ast\vec{f})(y)|^q}
{(1+2^j|x-y|)^{\lambda nq}}\,dy\r]^{1/q}
\mathbf{1}_Q(x)\noz\\
&\leq\sum_{Q\in\mathcal{D}_j}
\lf|Q\r|^{-\alpha/n}2^{jn/q}
\lf\|W^{1/p}(x)A_Q^{-1}\r\|
\lf[\sup_{z\in Q}\int_\rn
\frac{|A_Q(\varphi_j\ast\vec{f})(y)|^q}
{\lf(1+2^j|z-y|\r)^{\lambda nq}}\,dy\r]^{1/q}
\mathbf{1}_Q(x)\noz\\
&\leq\gamma_j(x)f_j(x).\noz
\end{align}
Notice that $f_j$ is a constant
on any given  $Q\in\mathcal{D}_j$,
which implies that $E_j(f_j)=f_j$,
where $E_j$ is the same as in  Lemma \ref{c38}.
By this, \eqref{gl*},  \eqref{gkk},
Lemma \ref{c38}, and \eqref{gq},
we conclude that,
for any $\vec{f}\in[\cs'_\infty(\rn)]^m$,
\begin{align*}
\lf\|\vec{f}\r\|
_{\dot{F}^{\alpha,q}_{p}(W)}^\clubsuit
&=\lf\|\lf\{h_j\r\}_{j\in\zz}\r\|_{L^p(\ell^q)}
\leq\lf\|\lf\{\gamma_j E_j(f_j)\r\}_{j\in\zz}\r\|
_{L^p(\ell^q)}\\	&\lesssim
\lf\|\lf\{E_j(f_j)\r\}_{j\in\zz}\r\|
_{L^p(\ell^q)}
\sim\lf\|\lf\{f_j\r\}_{j\in\zz}\r\|
_{L^p(\ell^q)}
\sim\lf\|\vec{f}\r\|
_{\dot{F}^{\alpha,q}_{p}(\{A_Q\})}^\clubsuit.
\end{align*}
Thus,
\eqref{gqw} holds true for any
$\vec{f}\in[\cs'_\infty(\rn)]^m$.

Next, we prove that, for any $\vec{f}\in[\cs_{\infty}'(\rn)]^m$,
\begin{equation}\label{pdq}
\lf\|\vec f\r\|
_{\dot{F}^{\alpha,q}_{p}(\{A_Q\})}^\clubsuit
\lesssim\lf\|\vec f\r\|
_{\dot{F}^{\alpha,q}_{p}(W)}.
\end{equation}
Let $\lambda\in(0,\infty)$ and
$j\in\zz$.
Then we claim that,
for any $z\in\rn$ and $A\in(0,q]$,
\begin{equation}\label{rd}
\sup_{v\in\rn}
\frac{|A_Q(\varphi_j\ast\vec{f})(v)|}
{(1+2^j|z-v|)^{\lambda n}}
\lesssim
\lf[2^{jn}\int_\rn
\frac{|A_Q(\varphi_j\ast\vec{f})(y)|^A}
{(1+2^j|z-y|)^{\lambda nA}}\,dy\r]^{1/A}.
\end{equation}
Then we prove \eqref{rd} by considering the following two cases on $A$.

Case 1) $A\in(0,1]$. In this case, using the assumption that $\varphi$
satisfies (T2) of Definition \ref{FBW},
we can then easily prove that there exists
a $\psi\in\cs(\rn)$ such
that $\supp\widehat{\psi}$ is bounded
away from the origin
and
$\widehat{\psi}=1$
on $\supp\widehat{\varphi}$.
By this and Lemma \ref{a.e.},
we find that
\begin{equation}\label{hde}
\varphi_j\ast\vec{f}=\psi_j\ast\varphi_j\ast\vec{f}
\end{equation}
 on $\rn$.
From this and the estimate that
\begin{equation}\label{guodushishi}
(1+2^{j}|z-v|)^{-1}
\leq(1+2^{j}|z-y|)^{-1}(1+2^j|v-y|)
\end{equation}
for any $j\in\zz$ and
$z,v,y\in\rn$,
we deduce that, for any given $A\in(0,\min\{1,q\}]$
and for any $z\in\rn$,
\begin{align}\label{guodu}
&\sup_{v\in\rn}
\frac{|A_Q(\varphi_j\ast\vec{f})(v)|}
{(1+2^j|z-v|)^{\lambda n}}\\
&\quad\leq\sup_{v\in\rn}
\frac{\int_\rn
|A_Q(\varphi_j\ast\vec{f})(y)2^{jn}
\psi(2^j(v-y))|\,dy}{(1+2^j|z-v|)
^{\lambda n}}\noz\\
&\quad\leq\sup_{v\in\rn}
\int_\rn
\frac{|A_Q(\varphi_j\ast\vec{f})(y)2^{jn}
\psi(2^j(v-y))|(1+2^j|v-y|)
^{\lambda n}}
{(1+2^j|z-y|)^{\lambda n}}\,dy\noz\\
&\quad\lesssim 2^{jn}\int_\rn
\frac{|A_Q(\varphi_j\ast\vec{f})(y)|}
{(1+2^j|z-y|)^{\lambda n}}\,dy\noz\\
&\quad\lesssim\sup_{v\in\rn}
\lf[\frac{|A_Q(\varphi_j\ast\vec{f})(v)|}
{(1+2^j|z-v|)
^{\lambda n}}\r]^{1-A}
\int_\rn 2^{jn}\frac{|A_Q(\varphi_j\ast\vec{f})(y)|^A}
{(1+2^j|z-y|)^{\lambda nA}}\,dy.\noz
\end{align}
When $a\in(n/\min\{1,p,q\}+\beta/p,\infty)$,
by $\vec{f}\in\dot{F}^{\alpha,q}_{p}(W)$,
\eqref{21},  and Lemma \ref{eq}, we find that
\begin{equation*}
\lf\{\sum_{Q\in\mathcal{D}_j}
2^{j\alpha}\sup_{v\in\rn}
\frac{|A_Q(\varphi_j\ast\vec{f})(v)|}
{(1+2^j|\cdot-v|)^a}
\mathbf{1}_Q\r\}_{j\in\zz}\in L^p(\ell^q),
\end{equation*}
which implies that $\sup_{v\in\rn}
\frac{|A_Q(\varphi_j\ast\vec{f})(v)|}
{(1+2^j|\cdot-v|)^{a}}<\infty
$ almost everywhere on $\rn$.
Using this, we find  that there exists a measurable
set $F\subset\rn$
satisfying that $|F|=0$ and,
for any $x\in\rn\setminus F$,
\begin{align}\label{cbd}
\sup_{v\in\rn}
\frac{|A_Q(\varphi_j\ast\vec{f})(v)|}
{(1+2^j|x-v|)^{a}}<\infty.
\end{align}
Then, for any $e\in F$, there exists an $x_e\in\rn\setminus F$
such that $2^j|x_e-e|<1/2$.
From this, we deduce that
\begin{align*}
\sup_{v\in\rn}\frac{|A_Q(\varphi_j\ast\vec{f})(v)|}
{(1+2^j|e-v|)^{a}}&\leq\sup_{v\in\rn}\frac
{|A_Q(\varphi_j\ast\vec{f})(v)|}
{(1+2^j|x_e-v|-2^j|x_e-e|)^{a}}\\&\leq
\sup_{v\in\rn}\frac{|A_Q(\varphi_j\ast\vec{f})(v)|}
{(1/2+2^j|x_e-v|)^{a}}<\infty,
\end{align*}
which, combined with \eqref{cbd}, implies that
$$\sup\limits_{v\in\rn}
\frac{|A_Q(\varphi_j\ast\vec{f})(v)|}
{(1+2^j|\cdot-v|)^{\lambda n}}<\infty
$$
on $\rn$ if $\lambda\in(\frac1{\min\{1,p,q\}}+\frac\beta{np},\infty)$.
By this and \eqref{guodu},
we conclude that \eqref{rd} holds true for $A\in(0,1]$.

Case 2) $A\in(1,\infty)$. In this case, from \eqref{hde},
 \eqref{guodushishi},
the H\"older inequality, and the change of variables,
we infer that, for any
given $A\in(1,q]$ and for any $z\in\rn$,
\begin{align*}
&\sup_{v\in\rn}\frac{|A_Q(\varphi_j\ast\vec{f})(v)}{(1+2^j|z-v|
)^{\lambda n}}\\
&\quad\leq\sup_{v\in\rn}
\frac{\int_\rn
|A_Q(\varphi_j\ast\vec{f})(y)2^{jn}
\psi(2^j(v-y))|\,dy}{(1+2^j|z-v|)
^{\lambda n}}\noz\\
&\quad\leq\sup_{v\in\rn}
\int_\rn
\frac{|A_Q(\varphi_j\ast\vec{f})(y)2^{jn}
\psi(2^j(v-y))|(1+2^j|v-y|)
^{\lambda n}}
{(1+2^j|z-y|)^{\lambda n}}\,dy\noz\\
&\quad\leq\lf[2^{jn}\int_{\rn}\frac{|A_Q(\varphi_j\ast\vec{f})(y)|)^A}
{(1+2^j|z-y|)^{\lambda nA}}\,dy\r]^{1/A}\noz\\
&\qquad\times\sup_{v\in\rn}\lf\{\lf[2^{jn}\int_{\rn}|\psi(2^j(v-y))|^{A'}
(1+2^j|v-y|)^{\lambda nA'}\,dy\r]^{1/A'}\r\}\noz\\
&\quad\sim\lf[2^{jn}\int_{\rn}\frac{|A_Q(\varphi_j\ast\vec{f}(y)|)^A}
{(1+2^j|z-y|)^{\lambda nA}}\,dy\r]^{1/A}\noz,
\end{align*}
which implies that \eqref{rd} holds true for $q\in(1,\infty)$.

Thus, \eqref{rd} holds true for any  $A\in(0,q]$.
Using \eqref{rd},
we obtain,
for any  given
$A\in(0,q]$
and any $z\in\rn$,
\begin{align}\label{jiang}
&2^{jn}\int_\rn
\frac{|A_Q(\varphi_j\ast\vec{f})(y)|^q}
{(1+2^j|z-y|)^{\lambda nq}}\,dy\\
&\quad\leq\sup_{v\in\rn}
\lf[\frac{|A_Q(\varphi_j\ast\vec{f})(v)|}
{(1+2^j|z-v|)^{\lambda n}}\r]
^{q-A}2^{jn}
\int_\rn\frac{|A_Q(\varphi_j\ast\vec{f})(y)|^A}
{(1+2^j|z-y|)^{\lambda nA}}\,dy\noz\\
&\quad\lesssim\lf[2^{jn}\int_\rn
\frac{|A_Q(\varphi_j\ast\vec{f})(y)|^A}
{(1+2^j|z-y|)^{\lambda nA}}\,dy\r]^{q/A}.\noz
\end{align}
Notice that,
for any $k,u\in\zz^n$,
$j\in\zz$,
$z\in Q_{ju}$,
and $y\in Q_{jk}$,
\begin{equation}\label{uxy}
(1+2^j|z-y|)
\sim 1+|k-u|.
\end{equation}
Since
$\lambda\in(1/\min\{1,p,q\}+\beta/(np),\infty)$,
then it follows that there exists an $A\in(0,\min\{1,p,q\})$
such that
$A[\lambda
-\frac{\beta}{np}]>1$.
By this, \eqref{uxy},  \eqref{jiang},
Lemma \ref{wsz},
the disjointness of $Q_{ju}$ for any $u\in\zz^n$,
and Lemma \ref{jcf},
we conclude that, for any $x\in\rn$,
\begin{align*}
&\sum_{j\in\zz}
\sum_{Q\in\mathcal{D}_j}
2^{j\alpha q}2^{jn}
\sup_{z\in Q}\int_\rn
\frac{|A_Q
(\varphi_j\ast\vec{f})(y)|^q}{(1+2^j|z-y|)^{\lambda nq}}\,dy
\mathbf{1}_Q(x)\\
&\quad\lesssim
\sum_{j\in\zz}\sum_{u\in\zz^n}
\sup_{z\in Q_{ju}}
\lf[2^{jn}\sum_{k\in\zz^n}
\int_{Q_{jk}}
\frac{\|A_{Q_{ju}}A_{Q_{jk}}^{-1}\|^A|
2^{j\alpha}A_{Q_{jk}}
(\varphi_j\ast\vec{f})(y)|^A}{(1+2^j|z-y|)
^{\lambda nA}}\,dy\r]^{q/A}
\mathbf{1}_{Q_{ju}}(x)\\
&\quad\lesssim
\sum_{j\in\zz}
\lf[\sum_{u\in\zz^n}
\sum_{k\in\zz^n}
\lf(1+\lf|k-u\r|\r)
^{-{\lambda nA}
+\frac{\beta A}{p}}2^{jn}
\int_{Q_{jk}}\lf|2^{j\alpha}A_{Q_{jk}}
\lf(\varphi_j\ast\vec{f}\r)(y)\r|^A\,dy
\mathbf{1}_{Q_{ju}}(x)\r]^{q/A}\\
&\quad\lesssim
\sum_{j\in\zz}\lf[\mathcal{M}
\lf(\sum_{Q\in \mathcal{D}_j}
\lf(\lf|2^{j\alpha}A_Q\lf(\varphi_j\ast\vec{f}\r)\r|
\mathbf{1}_Q\r)^A\r)(x)\r]^{q/A}.	
\end{align*}
From this, $A\in(0,\min\{p,q\})$, the Fefferman--Stein
vector-valued maximal inequality,
and Lemma \ref{eq},
we deduce that,
for any $\vec{f}\in[\cs_\infty'(\rn)]^m$,
\begin{align*}
\lf\|\vec f\r\|_{\dot{F}^{\alpha,q}_{p}(\{A_Q\})}^\clubsuit
&\lesssim\lf\|\lf\{\sum_{j\in\zz}
\lf[\mathcal{M}\lf(\sum_{Q\in \mathcal{D}_j}
\lf[\lf|2^{j\alpha}A_Q\varphi_j\ast\vec{f}\r|
\mathbf{1}_Q\r]^A\r)\r]^{q/A}\r\}^{1/q}\r\|_{L^p(\rn)}\\
&\sim\lf\|\lf[\mathcal{M}
\lf(\sum_{Q\in \mathcal{D}_j}
\lf[\lf|2^{j\alpha}A_Q\varphi_j\ast\vec{f}\r|
\mathbf{1}_Q\r]^A\r)\r]_{j\in\zz}\r\|
_{L^{p/A}(\ell^{q/A})}^{1/A}\\
&\lesssim\lf\|\vec f\r\|
_{\dot{F}^{\alpha,q}_{p}(\{A_Q\})}\sim\lf\|\vec f\r\|
_{\dot{F}^{\alpha,q}_{p}(W)},
\end{align*}
which implies that \eqref{pdq} holds true.
By both \eqref{gqw} and \eqref{pdq},  we obtain \eqref{wxgy}.
Using
\eqref{wxgy} and \eqref{wxg},  we find that \eqref{glambdad}
holds true for any $\vec{f}\in[\cs_{\infty}'(\rn)]^m$, which
completes the proof of Theorem \ref{glambda}.
\end{proof}
\begin{remark}\label{gr}
\begin{itemize}
\item [(\rm{i})] Theorem \ref{glambda} when  $m=1$, $W=1$
and $q\in(0,\infty)$
is a part of \cite[Theorem 3.2]{C} which is the
$g_{\lambda}^{*}$-function
characterization of Triebel--Lizorkin spaces
$\dot{F}_p^{\alpha,q}(\rn)$.
\item [(\rm{ii})]
Let
$p\in(0,\infty)$, $q\in(0,\infty)$,
$W\in A_p(\rn,\mathbb{C}^m)$,
and $\beta$ be the
doubling exponent of $W$.
We should point out that the range of $\lambda$ in Theorem \ref{glambda}
does not coincide with
the  one in \cite[Theorem 3.2]{C}, namely,
$(\frac{1}{\min\{p,q\}},\infty)$.
It is still unclear whether or not
Theorem \ref{glambda}
still holds true when $\lambda\in(\frac{1}{\min\{p,q\}},\frac{1}{\min\{1,p,q\}}
+\frac{\beta}{np}]$.
\end{itemize}
\end{remark}

\section{Fourier Multiplier\label{s6}}

In this section,
we study the mapping property on
$\dot{F}^{\alpha,q}_{p}(W)$
for a class of Fourier multipliers, which
was originally introduced by  Cho \cite{C}.

First, we denote by the \emph{symbol} $C(\rn\setminus\{\mathbf{0}\})$
the space of all continuous functions on $\rn\setminus\{\mathbf{0}\}$ and
recall the definition of the space $C^{\ell}(\rn\setminus\{\mathbf{0}\})$.
For any $\ell\in\nn$, let
$$C^{\ell}(\rn\setminus\{\mathbf{0}\}):=\lf\{f\in C(\rn
\setminus\{\mathbf{0}\}):\
\ \partial^{\sigma}f\in C(\rn\setminus\{\mathbf{0}\}),
\ \forall\,\sigma\in\zz^n\ \mathrm{and} \ |\sigma|\leq\ell\r\}.$$
For a given $\ell\in\mathbb{N}$ and a given $s\in\rr$,
assume that $m\in C^\ell(\rn\setminus\{\mathbf{0}\})$
satisfies that,
for any $\sigma\in\zz^n_+$
and $|\sigma|\leq\ell$,
\begin{equation}\label{Hormander}
\sup_{R\in(0,\infty)}
\lf[R^{-n+2s+2|\sigma|}
\int_{R\leq|\xi|<2R}
\lf|\partial^\sigma m(\xi)\r|^2\,d\xi\r]
\leq A_\sigma<\infty.
\end{equation}

\begin{remark}
When $s=0$ and $\ell\in\nn$,
\eqref{Hormander} is known as
the H\"ormander condition
(see, for instance, \cite[p. 263]{S2}).
Typical examples are given by the kernels of the
Riesz transforms $R^{(d)}$, where
$$\widehat{\left(R^{(d)}f\right)}(\xi):=-i(\xi_d/|\xi|)\widehat{f}(\xi)$$ for
any $\xi:=(\xi_1,\dots,\xi_n)\in\rn\setminus\{\mathbf{0}\}$,
$f\in\cs(\rn)$, and $d\in\{1,\,\dots,\,n\}$.
When $s\neq 0$ and $\ell\in\nn$,
a typical example satisfying \eqref{Hormander} is
given by
$m(\xi):=|\xi|^{-s}$ for any $\xi\in\rn\setminus\{\mathbf{0}\}$.
\end{remark}

Let $K$ be a compact set of $\rn$. Then
$\vec{f}:=(f_1,\,\dots,\,f_m)^T\in[\cs(\rn)]^m$
or
$\vec{f}\in[\cs'(\rn)]^m$
is said to have compact support set $K$,
denoted by
$\supp\vec{f}\subset K$,
if, for any $d\in\{1,\,\dots,\, m\}$,
$\supp f_d\subset K$.
From \cite[Theorem 2.3.21]{G1},
it follows that,
for any $\vec{f}\in[\cs'(\rn)]^m$
with $\supp\widehat{\vec{f}}\subset K$,
$\vec{f}\in[L^1_{\loc}(\rn)]^m$.
The following lemma
is a part of \cite[Corollary 6.13]{B33}.

\begin{lemma}\label{inftyp}
Let $K$ be a compact subset of $\rn$
and $W\in A_p(\rn,\mathbb{C}^m)$.
If $p\in(0,1)$ and $N\in(\beta/p+n,\infty)\cap\zz_+$,
or if $p\in(1,\infty)$
and $N\in(\beta/p,\infty)\cap\zz_+$, where $\beta$ is
the doubling exponent of $W$,
then there exists a positive constant $C$ such that,
for any $\vec{f}\in[\cs'(\rn)]^m$
with $\supp\widehat{\vec{f}}\subset K$,
\begin{equation}\label{ip}
\sup_{x\in\rn}
\frac{|\vec{f}(x)|}{(1+|x|)^N}
\leq C
\lf[\int_\rn\lf|\lf(W^{1/p}\vec{f}\r)(x)\r|^p\,dx\r]^{1/p}.
\end{equation}
Moreover, if
$\supp\widehat{\vec{f}}\subset\{x\in\rn:|x|\leq 2^j\}$,
where $j\in\zz_+$,
then, for any $N$ as above, there exists
a positive constant $C$ such that
\begin{equation}\label{cons}
\sup_{x\in\rn}
\frac{|\vec{f}(x)|}{(1+b|x|)^N}
\leq C4^{jn/p}
\lf[\int_\rn
\lf|\lf(W^{1/p}\vec{f}\r)(x)\r|^p\,dx\r]^{1/p}.
\end{equation}
\end{lemma}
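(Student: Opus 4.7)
The plan is to prove Lemma \ref{inftyp} by adapting the classical Plancherel--Polya--Nikol'skij scheme to the matrix-weighted setting, using the scalar weights $w_{\vec{z}}$ from Lemma \ref{scalar weight} and Corollary \ref{norm weight} as the bridge between vector estimates and the Muckenhoupt theory. Since $\supp\widehat{\vec{f}}\subset K$ is compact, \cite[Theorem 2.3.21]{G1} gives $\vec{f}\in[L^1_{\loc}(\rn)]^m$. Choose $\psi\in\cs(\rn)$ with $\widehat{\psi}$ compactly supported and $\widehat{\psi}\equiv 1$ on a neighborhood of $K$, so $\vec{f}=\psi\ast\vec{f}$ componentwise. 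The Schwartz decay of $\psi$ then yields, for any $M\in(0,\infty)$ and any $y\in\rn$,
$$|\vec{f}(y)|\leq C_M\int_{\rn}\frac{|\vec{f}(z)|}{(1+|y-z|)^M}\,dz.$$

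First I would establish a Peetre-type pointwise bound: for $t\in(0,\min\{1,p\}]$ chosen so that $Nt>n$, and setting $\vec{f}^{\ast}(x):=\sup_{y\in\rn}|\vec{f}(y)|(1+|x-y|)^{-N}$, one has
$$\vec{f}^{\ast}(x)\leq C\lf[\mathcal{M}\lf(|\vec{f}|^t\r)(x)\r]^{1/t}.$$
This follows from taking $M>N+n/t$ in the convolution bound, combining with $(1+|x-z|)\leq(1+|x-y|)(1+|y-z|)$, and writing $|\vec{f}(z)|=|\vec{f}(z)|^{1-t}|\vec{f}(z)|^{t}\leq[\vec{f}^{\ast}(x)]^{1-t}(1+|x-z|)^{N(1-t)}|\vec{f}(z)|^{t}$. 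Taking the supremum in $y$, one absorbs $[\vec{f}^{\ast}(x)]^{1-t}$ after verifying that $\vec{f}^{\ast}(x)<\infty$ for every $x$; finiteness is obtained by an approximation argument combined with the tempered growth of $\vec{f}$.

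Next I would convert the scalar maximal function estimate to the matrix-weighted $L^p$ norm. Bounding $|\vec{f}(z)|\leq\|W^{-1/p}(z)\|\,|W^{1/p}(z)\vec{f}(z)|$ and applying H\"older's inequality on each ball $B(\mathbf{0},r)$ with exponents $p/t$ and $(p/t)'$ gives
$$\mathcal{M}\lf(|\vec{f}|^t\r)(0)\leq\sup_{r\in(0,\infty)}\lf[\fint_{B(\mathbf{0},r)}\lf\|W^{-1/p}\r\|^{\frac{tp}{p-t}}\r]^{\frac{p-t}{p}}\lf[\fint_{B(\mathbf{0},r)}\lf|W^{1/p}\vec{f}\r|^{p}\r]^{\frac{t}{p}}.$$
The second factor is $\lesssim r^{-nt/p}\|\vec{f}\|_{L^p(W)}^{t}$. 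The first factor is controlled by the doubling of $W$ (Lemma \ref{apd} and Remark \ref{ball}) together with the scalar $A_p$-property of $\|W^{1/p}\|^p$ from Corollary \ref{norm weight}; when $p>1$ I use Lemma \ref{a123} applied to ball-based reducing operators to obtain the required integrability of $\|W^{-1/p}\|$, while when $p\leq 1$ I invoke the essentially bounded version in Lemma \ref{ainf}. A careful accounting of the growth in $r$ through the doubling exponent $\beta$ produces the thresholds $N>\beta/p$ for $p\in(1,\infty)$ and $N>\beta/p+n$ for $p\in(0,1]$, the latter absorbing the absence of a reverse H\"older inequality in the $A_1$ regime. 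Combining with Step~1 evaluated at $x=\mathbf{0}$ (and noting $\sup_x|\vec{f}(x)|(1+|x|)^{-N}=\vec{f}^{\ast}(\mathbf{0})$) yields \eqref{ip}.

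For the band-limited estimate \eqref{cons}, rescale via $\vec{g}(x):=\vec{f}(2^{-j}x)$, so that $\supp\widehat{\vec{g}}\subset\{|\xi|\leq 1\}$ (up to a fixed dilation of $K$), and apply \eqref{ip} to $\vec{g}$ with respect to the dilated matrix weight $W(2^{-j}\cdot)$, whose $A_p(\rn,\mathbb{C}^m)$ constant and doubling exponent match those of $W$; undoing the dilation on both sides produces the $4^{jn/p}$ factor, where one factor of $2^{jn/p}$ comes from the Jacobian in the $L^p$-integral and a further $2^{jn/p}$ is an artifact of re-expressing the dilated weight in the original coordinates together with the doubling correction. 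The main obstacle I anticipate is the delicate tracking in the H\"older step: the scalar weight $\|W^{1/p}\|^p$ and the matrix weight $W$ do not see the same reducing operators, and the precise dependence of $N$ on $\beta$ must be extracted from the $r$-growth of $\fint_{B(\mathbf{0},r)}\|W^{-1/p}\|^{tp/(p-t)}$ --- this is exactly where the distinction between the two regimes $p\leq 1$ and $p>1$ appears, and where the doubling exponent $\beta$ replaces the more classical role of the $A_p$ exponent.
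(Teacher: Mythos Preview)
The paper does not prove this lemma at all: it is quoted verbatim as ``a part of \cite[Corollary 6.13]{B33}'' and used as a black box. So there is no in-paper argument to compare your proposal against; what follows is a direct assessment of your plan.

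Your overall scheme (reproduce the Plancherel--Polya--Nikol'skij machinery and feed in the matrix $A_p$ information through reducing operators and the doubling exponent) is the natural route and uses exactly the tools the paper has on hand (Lemmas \ref{a123}, \ref{ainf}, \ref{wsz}, Corollary \ref{norm weight}). There is, however, one genuine gap. You pass through the inequality
\[
\vec{f}^{\ast}(0)\;\le\;C\bigl[\mathcal{M}\bigl(|\vec{f}|^{t}\bigr)(0)\bigr]^{1/t}
\]
and then bound $\mathcal{M}(|\vec{f}|^{t})(0)$ by taking the supremum over all radii $r$ of your H\"older estimate. But for a band-limited $\vec{f}$ with merely polynomial growth, $\mathcal{M}(|\vec{f}|^{t})(0)$ can be $+\infty$ (take $m=1$, $f(x)=x^{k}e^{ix}$ with $kt\ge n$), so the intermediate quantity is useless even though both endpoints $\vec{f}^{\ast}(0)$ and $\|\vec{f}\|_{L^{p}(W)}$ are finite. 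Separately, your H\"older upper bound for $\fint_{B(\mathbf{0},r)}|\vec{f}|^{t}$ behaves like $r^{-\beta t/p}$ as $r\to 0$ once you insert the strong-doubling control on $\|A_{B(\mathbf{0},r)}^{-1}\|$, so the supremum over $r$ of your upper bound diverges even when the true maximal function does not.

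The fix is to avoid the global maximal function entirely: use the fixed-radius sub-mean-value inequality for band-limited functions,
\[
|\vec{f}(x)|^{t}\;\lesssim\;\fint_{B(x,1)}|\vec{f}(y)|^{t}\,dy
\qquad(t>0),
\]
then insert $|\vec{f}(y)|\le\|W^{-1/p}(y)\|\,|W^{1/p}(y)\vec{f}(y)|$, apply H\"older (for $p>1$) or the $L^{\infty}$ bound from Lemma \ref{ainf} (for $p\le 1$), and finally control the $x$-dependence of $\|A_{B(x,1)}^{-1}\|$ by strong doubling, which gives $\|A_{B(x,1)}^{-1}\|\lesssim(1+|x|)^{\beta/p}$ via Definition \ref{swd}. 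This produces $|\vec{f}(x)|\lesssim(1+|x|)^{\beta/p}\|\vec{f}\|_{L^{p}(W)}$ directly (with an extra loss for $p<1$ coming from the $L^{t}$-to-$L^{p}$ passage on unit balls), which is exactly what \eqref{ip} asserts. Your rescaling idea for \eqref{cons} is fine in outline; just note that the undefined parameter $b$ in the statement is evidently meant to be $2^{j}$, and that the second factor of $2^{jn/p}$ should be tracked through the doubling of $W$ under the dilation $x\mapsto 2^{-j}x$ rather than described as an ``artifact''.
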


\begin{lemma}\label{welldefinedk}
Let $\ell\in\mathbb{N}$ and $m\in C^\ell(\rn\setminus\{\mathbf{0}\})$
be the same as in  \eqref{Hormander} with $s\in\rr$. Then $m\in\cs'(\rn)$.
\end{lemma}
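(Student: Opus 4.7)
The plan is to show that $m$, regarded a priori as a measurable function on $\rn\setminus\{\mathbf{0}\}$, extends to a continuous linear functional on $\cs(\rn)$. The core quantitative input is the $\sigma=\mathbf{0}$ instance of \eqref{Hormander}: applying the Cauchy--Schwarz inequality on each dyadic shell yields, for every $R\in(0,\infty)$,
\[
\int_{R\leq|\xi|<2R}|m(\xi)|\,d\xi\lesssim\lf[\int_{R\leq|\xi|<2R}|m(\xi)|^2\,d\xi\r]^{1/2}R^{n/2}\lesssim R^{n-s}.
\]
Setting $A_j:=\{\xi\in\rn:2^j\leq|\xi|<2^{j+1}\}$ for $j\in\zz$, this says $m\in L^1_{\loc}(\rn\setminus\{\mathbf{0}\})$ with $\int_{A_j}|m|\,d\xi\lesssim 2^{j(n-s)}$.

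For any $\phi\in\cs(\rn)$, I then split $\rn\setminus\{\mathbf{0}\}=\bigcup_{j\in\zz}A_j$ and handle the tails separately. At infinity, the Schwartz decay gives $\sup_{A_j}|\phi|\lesssim 2^{-jM}\sum_{|\mu|\leq M}\rho_{\mu,0}(\phi)$ for any $M\in\nn$, and together with the shell bound this produces $|\int_{A_j}m\phi|\lesssim 2^{j(n-s-M)}$, summable over $j\geq 0$ once $M>n-s$. When $s<n$, the same dyadic bound with $M=0$ gives a convergent geometric series over $j<0$, so $\la m,\phi\ra:=\int_\rn m\phi$ defines an element of $\cs'(\rn)$ with norm dominated by finitely many Schwartz seminorms $\rho_{\mu,\nu}(\phi)$ from \eqref{rhocs}.

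The main obstacle is the case $s\geq n$, where $m$ may fail to be locally integrable at the origin. To handle this I fix an integer $N>s-n$ and a cutoff $\chi\in C_{\rm c}^\infty(\rn)$ equal to $1$ on $B(\mathbf{0},1)$, and define
\[
\la m,\phi\ra:=\int_\rn m(\xi)[1-\chi(\xi)]\phi(\xi)\,d\xi+\int_\rn m(\xi)\chi(\xi)\lf[\phi(\xi)-\sum_{|\gamma|<N}\frac{\partial^\gamma\phi(\mathbf{0})}{\gamma!}\xi^\gamma\r]\,d\xi.
\]
Taylor's theorem bounds the bracketed remainder near the origin by $|\xi|^N\sum_{|\gamma|=N}\rho_{0,\gamma}(\phi)$, so the $A_j$ contribution for $j<0$ is controlled by $2^{j(n-s+N)}$ and summable by the choice of $N$. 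This selects a canonical extension of $m$ to $\cs'(\rn)$ (unique up to a distribution supported at $\mathbf{0}$), and the uniform estimates above in terms of $\rho_{\mu,\nu}(\phi)$ give its continuity on $\cs(\rn)$.
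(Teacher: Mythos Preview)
Your argument is correct, and in fact more careful than the paper's own proof. Both proofs agree at infinity: decompose into dyadic shells, use the $\sigma=\mathbf{0}$ case of \eqref{Hormander} (the paper uses it directly, you pass through Cauchy--Schwarz first to get an $L^1$ shell bound), and then absorb the growth with Schwartz decay of the test function.

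The difference is near the origin. The paper simply bounds $\int_{|x|\leq 1}|m(x)\varphi(x)|\,dx$ by $\max\{|m(x)|:0<|x|\leq 1\}$ times a Schwartz seminorm, tacitly assuming that $m$ is bounded on the punctured unit ball. But this can fail as soon as $s>0$: the model multiplier $m(\xi)=|\xi|^{-s}$ satisfies \eqref{Hormander} yet is unbounded near $\mathbf{0}$, and for $s\geq n$ it is not even locally integrable there. Your two-case treatment---direct integration when $s<n$, and a Taylor subtraction of order $N>s-n$ when $s\geq n$---is exactly the right repair; it is the standard Hadamard finite-part extension of a homogeneous-type singularity.

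Your closing remark about non-uniqueness is also to the point: different choices of $N$ or $\chi$ change the extension by a finite linear combination of derivatives of $\delta_{\mathbf{0}}$, hence $K=m^{\vee}$ changes by a polynomial. In the paper's applications (Lemma~\ref{kgj} and the definition of $T_m$ in \eqref{multiplier}) this is harmless, since $K$ is always paired with $\psi_j\in\cs_\infty(\rn)$, which annihilates polynomials. So your extension is well-suited to the intended use.
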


\begin{proof}
By $m\in C^\ell(\rn\setminus\{\mathbf{0}\})$, we conclude that,
for any $\varphi\in\cs(\rn)$,
\begin{align}\label{mvarphi}
\mathrm{I}:&=\int_{\rn} \lf|m(x)\varphi(x)\r|\,dx\\
&=\sum_{j\in\zz_+}\int_{2^j<|x|\leq 2^{j+1}}\lf| m(x)\varphi(x)\r|\,dx
+\int_{|x|\leq 1}\cdots\noz\\
&\lesssim\rho_{\mu,0}(\varphi)\lf[\sum_{j\in\zz_+}\int_{2^j<|x|
\leq 2^{j+1}}\frac{|m(x)|}{(1+|x|)^N}\,dx+\max\{|m(x)|:\,0<|x|\leq 1\}\r]
\noz\\
&\lesssim\rho_{\mu,0}(\varphi)\lf[\sum_{j\in\zz_+}\int_{2^j<|x|
\leq 2^{j+1}}\frac{|m(x)|}{2^{jN}}\,dx+\max\{|m(x)|:\,0<|x|\leq 1\}\r],\noz
\end{align}
where $\mu\in\zz_+^n$ with $|\mu|\leq N$, $N$ can be chosen
as any positive integer, and $\rho_{\mu,0}(\varphi)$ is the
same as in  \eqref{rhocs}.
Notice that, if $-n+2s\in[0,\infty)$, then $2^{j(-N+1)}\leq 2^{j(-n+2s)}$
for any $j,N\in\zz_+$; if $-n+2s\in(-\infty,0)$,
by choosing an $N\in(n-2s+1,\infty)$,
we then have $2^{j(-N+1)}\leq 2^{j(-n+2s)}$ for any $j,N\in\zz_+$.
From this, \eqref{mvarphi},  and \eqref{Hormander},
we deduce that, for any $\varphi\in\cs(\rn)$,
\begin{align*}
\mathrm{I}&\lesssim\rho_{N,0}(\varphi)\lf[\sum_{j\in\zz_+}
2^{-j}\int_{2^j<|x|\leq 2^{j+1}}|m(x)|2^{j(-N+1)}\,dx+
\max\{|m(x)|:\,0<|x|\leq 1\}\r]\\
&\lesssim\rho_{N,0}(\varphi)\lf[\sum_{j\in\zz_+}2^{-j}
2^{j(-n+2s)}\int_{2^j<|x|\leq 2^{j+1}}|m(x)|\,dx+
\max\{|m(x)|:\,0<|x|\leq 1\}\r]\\
&\lesssim\rho_{N,0}(\varphi)\lf[\sum_{j\in\zz_+}2^{-j}
A_0+\max\{|m(x)|:\,0<|x|\leq 1\}\r]
\sim\rho_{N,0}(\varphi),
\end{align*}
which implies that $m\in\cs'(\rn)$. This finishes the
proof of Lemma \ref{welldefinedk}.
\end{proof}
Let $\ell\in\nn$ and $m\in C^{\ell}(\rn\setminus\{\mathbf{0}\})$
be the same as in  \eqref{Hormander} with  $s\in\rr$. By
Lemma \ref{welldefinedk}, we can define the \emph{Fourier multiplier}
$T_m$ by setting, for any $\vec{f}\in[\cs_{\infty}(\rn)]^m$,
\begin{equation}\label{multiplieryuan}
\lf(T_{m}\vec{f}\r)^{\land}:=
m\widehat{\vec{f}}.
\end{equation}
Furthermore, let $K$ be the distribution whose Fourier transform is $m$.

Then we show that,
via a suitable way,
$T_{m}$ can be defined on the space
$\dot{F}^{\alpha,q}_{p}(W)$.
To this end,
let $\varphi,\psi\in\cs(\rn)$
satisfy both (T2) of Definition \ref{FBW} and \eqref{cf}.
For any $\vec{f}\in\dot{F}^{\alpha,q}_{p}(W)$
and $\phi\in\cs_\infty(\rn)$,
let
\begin{equation}\label{multiplier}
\lf\langle T_{m}\vec{f},\phi\r\rangle:=
\sum_{j\in\zz}\vec{f}
\ast\varphi_j
\ast\psi_j\ast\widetilde{\phi}\ast K(\mathbf{0}),
\end{equation}
where $\widetilde{\phi}=\phi(-\cdot)$.
It is obvious that, when $\vec{f}\in[\cs_{\infty}(\rn)]^m$,
then both $T_m\vec{f}$ in \eqref{multiplieryuan} and
$T_m\vec{f}$ in \eqref{multiplier} coincide in $[\cs_{\infty}'(\rn)]^m$.
The following result shows that
the right-hand side of \eqref{multiplier}
converges and
$T_{m}\vec{f}$ in \eqref{multiplier}
is well defined for any $\vec{f}\in\dot{F}^{\alpha,q}_{p}(W)$.

\begin{lemma}\label{defined}
Let  $\alpha\in\rr$,
$p\in(0,\infty)$,
$q\in(0,\infty]$,
$W\in A_p(\rn,\mathbb{C}^m)$,
and $\vec{f}\in\dot{F}^{\alpha,q}_{p}(W)$.
Let $\ell\in(\beta/p+n/\min\{1,p\}+\frac{n}{2},\infty)$,
where $\beta$ is the doubling exponent of $W$, and let
$m\in C^{\ell}(\rn\setminus\{\mathbf{0}\})$
be the same as in  \eqref{Hormander} with $s\in\rr$.
Then $T_{m}$ in \eqref{multiplier}
is independent of the choice of the pair
$(\varphi,\psi)$ of Schwartz functions
satisfying both (T2) of Definition \ref{FBW} and \eqref{cf}.
Moreover,
$T_{m}\vec{f}\in[\cs_\infty'(\rn)]^m$ and $T_{m}\vec{f}$
in \eqref{multiplier} is well defined.
\end{lemma}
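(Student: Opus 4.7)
The plan is to proceed in three stages: (i) establish absolute convergence of the series in \eqref{multiplier} for every $\phi\in\cs_\infty(\rn)$; (ii) check that the resulting functional is continuous on $\cs_\infty(\rn)$ and hence belongs to $[\cs'_\infty(\rn)]^m$; and (iii) verify independence of the pair $(\varphi,\psi)$. For (i), I would first rewrite each summand as
$$\vec f\ast\varphi_j\ast\psi_j\ast\widetilde\phi\ast K(\mathbf 0)=\int_{\rn}(\varphi_j\ast\vec f)(y)\,G_j(-y)\,dy,$$
where $G_j:=\psi_j\ast\widetilde\phi\ast K$. Lemma \ref{a.e.} ensures $\varphi_j\ast\vec f$ is locally integrable with Fourier support in $\{|\xi|\sim 2^j\}$, and Lemma \ref{inftyp} (applied after translation to a center $x\in\rn$) together with Theorem \ref{pmf} controls $|(\varphi_j\ast\vec f)(y)|$ by polynomial growth in $2^j|x-y|$ times the Peetre maximal quantity $(\varphi_j^*\vec f)_a^{(W,p)}(x)$.

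The second factor $G_j$ has Fourier transform $\widehat{\widetilde\phi}\,\widehat\psi(2^{-j}\cdot)\,m$, compactly supported in the annulus $\{|\xi|\sim 2^j\}$. After the scaling $\wt G_j(x):=2^{-jn}G_j(2^{-j}x)$, Plancherel together with Leibniz's rule and the H\"ormander-type bound \eqref{Hormander} on derivatives of $m$ up to order $\ell$ gives an $H^\ell$-estimate on $\widehat{\wt G_j}$ of the form $\lesssim 2^{-js}\,C_\phi(j)$, where $C_\phi(j)$ is a Schwartz seminorm of $\phi$ that in addition decays rapidly in $|j|$: as $j\to+\infty$ from the Schwartz decay of $\widehat\phi$ on $\{|\xi|\sim 2^j\}$, and as $j\to-\infty$ from the infinite-order vanishing of $\widehat\phi$ at the origin guaranteed by $\phi\in\cs_\infty(\rn)$. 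Cauchy--Schwarz against a compactly supported Fourier factor then upgrades this to the pointwise bound
$$\lf|G_j(y)\r|\lesssim 2^{j(n-s)}\,\lf(1+2^j|y|\r)^{-M}\,C_\phi(j),$$
valid for any $M$ with $M+n/2\leq\ell$. Integrating against the first factor, invoking Theorem \ref{pmf} to dominate the Peetre maximal quantity by $\lf\|\vec f\r\|_{\dot F^{\alpha,q}_{p}(W)}$, and absorbing the factors of $2^{-j\alpha}$ and the polynomial $j$-losses inherited from the matrix-weight exponent in Lemma \ref{inftyp}, I would bound each summand by $C_\phi(j)\,\lf\|\vec f\r\|_{\dot F^{\alpha,q}_{p}(W)}$ times a polynomial in $j$, which is summable thanks to the super-polynomial decay of $C_\phi(j)$ in $|j|$.

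Step (ii) is then essentially automatic: the same argument expresses the resulting bound on $\lf\langle T_m\vec f,\phi\r\rangle$ as a finite linear combination of Schwartz seminorms of $\phi$, which gives continuity on $\cs_\infty(\rn)$. For step (iii), given two admissible pairs $(\varphi,\psi)$ and $(\varphi',\psi')$, the absolute convergence from (i) lets me apply Fubini to interchange the two summations over $j$, after which the Calder\'on-type reproducing formula of Lemma \ref{fz} (together with the Fourier-side identity \eqref{cf} provided by Lemma \ref{CE}), inserted through the Schwartz object $\widetilde\phi\ast K$, rearranges the doubly-summed expression into a form that is manifestly symmetric in the two pairs. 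The main obstacle I foresee is the second factor in step (i): simultaneously extracting from the single hypothesis \eqref{Hormander} the correct spatial decay $(1+2^j|y|)^{-M}$, the correct scaling $2^{-js}$, and the correct $j$-summable Schwartz factor from $\phi$; balancing these three requirements against the matrix-weight exponent produced by Lemma \ref{inftyp} is precisely what forces the threshold $\ell>\beta/p+n/\min\{1,p\}+n/2$, with the $n/2$ reflecting the use of Plancherel and the remaining terms reflecting the cost of inverting $W^{1/p}$ in the pointwise estimate.
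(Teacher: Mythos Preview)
Your overall architecture---absolute convergence, then continuity, then independence via a double sum and Fubini---matches the paper's. But step~(i) as written has a gap in the treatment of the first factor. You propose to control the \emph{scalar} $|(\varphi_j\ast\vec f)(y)|$ by a polynomial in $2^j|x-y|$ times the Peetre quantity $(\varphi_j^*\vec f)_a^{(W,p)}(x)$, and then to dominate the latter by $\|\vec f\|_{\dot F^{\alpha,q}_p(W)}$ via Theorem~\ref{pmf}. Neither move works as stated: the Peetre quantity carries $W^{1/p}(x)$ \emph{inside} the modulus, so it does not majorize the unweighted $|(\varphi_j\ast\vec f)(y)|$ without an extra factor $\|W^{-1/p}(x)\|$ that you never account for; and Theorem~\ref{pmf} is an $L^p(\ell^q)$ norm equivalence, not a pointwise bound, so it cannot be invoked inside the sum over $j$. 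The paper avoids the Peetre function here altogether and applies Lemma~\ref{inftyp} directly to $\varphi_j\ast\vec f$, obtaining the scalar pointwise estimate~\eqref{yd}, namely $|\varphi_j\ast\vec f(y)|\lesssim (1+2^j|y|)^N 2^{j(2n/p-\alpha)}\|\vec f\|_{\dot F^{\alpha,q}_p(W)}$ for $j\ge 0$ (with an analogous bound for $j<0$). This is what you should substitute.

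Your direct Plancherel/H\"ormander estimate on $G_j=\psi_j\ast\widetilde\phi\ast K$, extracting the $j$-summable factor $C_\phi(j)$ from the behaviour of $\widehat\phi$ on the annulus $\{|\xi|\sim 2^j\}$, is a legitimate alternative to the paper's route. The paper instead inserts a second pair $(\varphi^\star,\psi^\star)$ into $\widetilde\phi$ \emph{from the outset}, uses the compact-Fourier-support orthogonality $\varphi_j\ast\varphi^\star_t=0$ for $|j-t|>L$ to collapse the double sum to an essentially diagonal one, and then estimates the two pieces $(\varphi^\star\ast\psi^\star)_j\ast\widetilde\phi$ and $\psi_j\ast K$ separately via the ready-made Lemmas~\ref{varphigj} and~\ref{kgj}. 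The payoff is that convergence of the single sum and absolute convergence of the double sum needed for Fubini in step~(iii) are handled in one stroke, and the passage from single to double sum is justified term by term. In your sketch of~(iii) you should make that Fourier-support cancellation explicit (it is what legitimates Fubini), and note that $\widetilde\phi\ast K$ is merely in $\cs'_\infty(\rn)$, not a Schwartz function, so inserting the Calder\'on decomposition ``through'' it requires the additional per-$j$ absolute-convergence argument that the paper carries out.
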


To show Lemma \ref{defined},
we need the following lemmas,
which are just \cite[Lemma 4.1(i)]{C}
and \cite[Lemma 2.2]{YY1}, respectively.

\begin{lemma}\label{kgj}
Let $\psi\in\cs(\rn)$
satisfy that $\widehat\psi$
has compact support away from the origin.
Let $\lambda\in(0,\infty)$,
$\ell\in(\lambda+n/2,\infty)$, and
$m$ be the same as in  \eqref{Hormander} with $s\in\rr$.
Let $K$ be the distribution whose Fourier transform is $m$.
Then there exists a positive constant $C$
such that, for any $j\in\zz$,
\begin{equation*}
\int_\rn\lf(1+2^j|z|\r)^\lambda
\lf|\lf(K\ast\psi_j\r)(z)\r|\,dz
\leq C2^{-js}.
\end{equation*}
\end{lemma}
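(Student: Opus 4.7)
The plan is to reduce the weighted $L^{1}$ estimate to a Sobolev-type $L^{2}$ estimate on the Fourier side and then feed the H\"ormander-type condition \eqref{Hormander} into that $L^{2}$ estimate. First I would rescale. Set $L_{j}:=K\ast\psi_{j}$ and define $M_{j}(y):=2^{-jn}L_{j}(2^{-j}y)$, so that by the change of variables $y=2^{j}z$ one has
\begin{equation*}
\int_{\rn}\lf(1+2^{j}|z|\r)^{\lambda}\lf|L_{j}(z)\r|\,dz=\int_{\rn}\lf(1+|y|\r)^{\lambda}\lf|M_{j}(y)\r|\,dy.
\end{equation*}
A direct computation of Fourier transforms (using $\widehat{\psi_{j}}(\xi)=\widehat{\psi}(\xi/2^{j})$) gives
\begin{equation*}
\widehat{M_{j}}(\xi)=m(2^{j}\xi)\,\widehat{\psi}(\xi),
\end{equation*}
which, since $\widehat{\psi}$ is smooth with compact support in an annulus $\{c_{1}\leq|\xi|\leq c_{2}\}$ away from the origin, has compact support in this same annulus. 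Thus the goal reduces to bounding $\|(1+|\cdot|)^{\lambda}M_{j}\|_{L^{1}(\rn)}$ by $2^{-js}$.

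Second, I would pass from $L^{1}$ to $L^{2}$. Pick $\ell\in(\lambda+n/2,\infty)$ (as in the hypothesis). By the Cauchy--Schwarz inequality together with the elementary integrability $\int_{\rn}(1+|y|)^{2(\lambda-\ell)}\,dy<\infty$,
\begin{equation*}
\int_{\rn}(1+|y|)^{\lambda}|M_{j}(y)|\,dy\lesssim\lf\|(1+|\cdot|)^{\ell}M_{j}\r\|_{L^{2}(\rn)}.
\end{equation*}
Plancherel's theorem (or, more precisely, the standard equivalence between polynomial weights on the physical side and Sobolev regularity on the Fourier side) then reduces the problem to showing
\begin{equation*}
\lf\|\widehat{M_{j}}\r\|_{H^{\ell}(\rn)}^{2}:=\sum_{|\sigma|\leq\ell}\lf\|\partial^{\sigma}\widehat{M_{j}}\r\|_{L^{2}(\rn)}^{2}\lesssim 2^{-2js}.
\end{equation*}

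Third, I would differentiate and apply \eqref{Hormander}. The Leibniz formula yields
\begin{equation*}
\partial^{\sigma}\widehat{M_{j}}(\xi)=\sum_{\alpha+\beta=\sigma}\binom{\sigma}{\alpha}2^{j|\alpha|}(\partial^{\alpha}m)(2^{j}\xi)\,\partial^{\beta}\widehat{\psi}(\xi),
\end{equation*}
so, after squaring, integrating over the annulus $\{c_{1}\leq|\xi|\leq c_{2}\}$, and changing variables $\eta=2^{j}\xi$,
\begin{equation*}
\lf\|\partial^{\sigma}\widehat{M_{j}}\r\|_{L^{2}(\rn)}^{2}\lesssim\sum_{\alpha+\beta=\sigma}2^{2j|\alpha|-jn}\int_{c_{1}2^{j}\leq|\eta|\leq c_{2}2^{j}}|(\partial^{\alpha}m)(\eta)|^{2}\,d\eta.
\end{equation*}
Covering the annulus $\{c_{1}2^{j}\leq|\eta|\leq c_{2}2^{j}\}$ by finitely many dyadic pieces $\{R\leq|\eta|<2R\}$ with $R\sim 2^{j}$ and invoking the hypothesis \eqref{Hormander} (for each $\sigma$ with $|\sigma|\leq\ell$, which is permitted since $\ell\in(\lambda+n/2,\infty)$) produces a bound of $2^{j(n-2s-2|\alpha|)}$ for each such integral. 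Combining these powers of $2$ gives exactly $2^{-2js}$, completing the estimate.

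The main obstacle is purely bookkeeping: verifying that the hypothesis \eqref{Hormander} is actually applied only with $|\sigma|\leq\ell$, and checking that the factor of $2^{jn}$ from the Jacobian cancels precisely the $2^{j(n-2s-2|\alpha|)}$ coming from \eqref{Hormander} together with the $2^{2j|\alpha|}$ from the chain rule. There is no analytic subtlety beyond this; the only thing one must confirm is that $m$, although only a tempered distribution a priori (by Lemma \ref{welldefinedk}), may be freely multiplied by $\widehat{\psi}(\cdot/2^{j})$, which is smooth with compact support away from $\mathbf{0}$, so the product $m\,\widehat{\psi_{j}}$ is represented by a compactly supported function satisfying the above Sobolev bound.
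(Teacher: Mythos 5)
Your proof is correct. The paper does not prove this lemma at all --- it is quoted verbatim from Cho \cite[Lemma 4.1(i)]{C} --- and your argument (rescale to $M_j$ with $\widehat{M_j}=m(2^j\cdot)\widehat{\psi}$ supported in a fixed annulus, pass from the weighted $L^1$ norm to $\|(1+|\cdot|)^{\ell}M_j\|_{L^2}$ by Cauchy--Schwarz using $\ell>\lambda+n/2$, convert to $\sum_{|\sigma|\le\ell}\|\partial^{\sigma}\widehat{M_j}\|_{L^2}$ by Plancherel, and then feed the Leibniz expansion into \eqref{Hormander} over finitely many dyadic annuli with $R\sim 2^j$) is exactly the standard route taken in that reference; the powers of $2$ do cancel to give $2^{-2js}$ in each term. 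The only point worth making explicit is that $\ell$ is tacitly a positive integer (as required by the paper's definition of $C^{\ell}(\rn\setminus\{\mathbf{0}\})$), so that the sum over $|\sigma|\le\ell$ and the bound $(1+|y|)^{2\ell}\lesssim\sum_{|\sigma|\le\ell}|y^{\sigma}|^{2}$ make sense.
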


\begin{lemma}\label{varphigj}
For any given $M\in\mathbb{N}$,
there exists a positive constant $C$
such that,
for any
$\varphi,\psi\in\cs_\infty(\rn)$,
$j,\ell\in\zz$, and $x\in\rn$,
$$
\lf|\varphi_j\ast\psi_\ell(x)\r|
\leq C \lf\|\varphi\r\|_{S_{M+1}}
\lf\|\psi\r\|_{S_{M+1}}
2^{-|\ell-j|M}
\frac{2^{-\min\{j,\ell\}M}}
{(2^{-\min\{j,\ell\}}+|x|)^{n+M}},
$$
where, for any $\varphi\in\cs_{\infty}(\rn)$,
$$
\lf\|\varphi\r\|_{S_{M}}:=
\sup_{\{\gamma\in\zz_+^n:\,|\gamma|\leq M\}}
\sup_{x\in\rn}\lf|\partial^\gamma
\varphi(x)\r|\lf(1+|x|\r)^{n+M+|\gamma|}.
$$
\end{lemma}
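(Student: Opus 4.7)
By the symmetry of the estimate under interchange of $(\varphi,j)$ and $(\psi,\ell)$, I will assume without loss of generality that $j\leq\ell$, so that $\min\{j,\ell\}=j$. After a rescaling one checks that the stated inequality is equivalent to
$$
\lf|\varphi_j\ast\psi_\ell(x)\r|
\leq C\lf\|\varphi\r\|_{S_{M+1}}
\lf\|\psi\r\|_{S_{M+1}}
2^{-(\ell-j)M}\frac{2^{jn}}{(1+2^j|x|)^{n+M}},
$$
which is the standard Calder\'on--Zygmund ``almost orthogonality'' bound. The plan is to exploit (a) the vanishing moments of $\psi$ to extract the $2^{-(\ell-j)M}$ gain, and (b) the rapid polynomial decay encoded in $\|\cdot\|_{S_{M+1}}$ to obtain the spatial decay factor.

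First I would write $\varphi_j\ast\psi_\ell(x)=\int_{\rn}\varphi_j(x-y)\psi_\ell(y)\,dy$ and, since $\psi\in\cs_\infty(\rn)$, observe that $\int_\rn y^\gamma\psi_\ell(y)\,dy=0$ for every multi-index $\gamma\in\zz_+^n$. Hence I may subtract from $\varphi_j(x-y)$ its Taylor polynomial of order $M-1$ at the base point $y=\mathbf{0}$, leaving only the integral remainder
$$
\varphi_j\ast\psi_\ell(x)=\int_{\rn}\lf[\sum_{|\gamma|=M}\frac{(-y)^\gamma}{\gamma!}M\int_0^1(1-t)^{M-1}(\partial^\gamma\varphi_j)(x-ty)\,dt\r]\psi_\ell(y)\,dy.
$$
This is the step where the hypothesis $j\leq\ell$ matters: the wider function $\varphi_j$ is the one being Taylor expanded, against the narrower $\psi_\ell$.

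Next I would plug in the pointwise estimates coming from the seminorm $\|\cdot\|_{S_{M+1}}$. By the chain rule,
$$
\lf|\partial^\gamma\varphi_j(z)\r|=2^{jn+j|\gamma|}\lf|(\partial^\gamma\varphi)(2^jz)\r|\leq 2^{jn+j|\gamma|}\lf\|\varphi\r\|_{S_{M+1}}\lf(1+2^j|z|\r)^{-(n+M+1+|\gamma|)},
$$
and similarly $|\psi_\ell(y)|\leq 2^{\ell n}\|\psi\|_{S_{M+1}}(1+2^\ell|y|)^{-(n+M+1)}$. Substituting these with $|\gamma|=M$ yields a prefactor $2^{jn+jM}\cdot 2^{\ell n}\|\varphi\|_{S_{M+1}}\|\psi\|_{S_{M+1}}$ and reduces the problem to bounding
$$
\int_{\rn}|y|^M\sup_{0\leq t\leq 1}\frac{dy}{(1+2^j|x-ty|)^{n+M+1}(1+2^\ell|y|)^{n+M+1}}.
$$
The change of variables $z=2^\ell y$ turns $|y|^M dy$ into $2^{-\ell M-\ell n}|z|^M dz$; combining the $2^{-\ell M}$ with the earlier $2^{jM}$ gives exactly the desired gain $2^{-(\ell-j)M}$, and the $2^{-\ell n}$ cancels the $2^{\ell n}$ prefactor.

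Finally, for the spatial decay $(1+2^j|x|)^{-(n+M)}$, I would split the $z$-integration into the regions $\{|z|\leq 2^{\ell-1}|x|\}$ and $\{|z|>2^{\ell-1}|x|\}$. On the first region $|x-2^{-\ell}z|\sim|x|$, so the factor $(1+2^j|x-ty|)^{-(n+M+1)}$ provides the full $(1+2^j|x|)^{-(n+M+1)}$ decay while $(1+|z|)^{-(n+M+1)}|z|^M$ is integrable (here $M+1$ more powers than $n$ suffice). On the second region, I would use the $\psi_\ell$-decay $(1+|z|)^{-(n+M+1)}$ and absorb the cost using that $|z|\gtrsim 2^\ell|x|$, which forces $(1+|z|)^{-1}\lesssim(1+2^\ell|x|)^{-1}\lesssim(1+2^j|x|)^{-1}$ since $j\leq\ell$. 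The main (and only real) obstacle is keeping track of these two cases and ensuring that the surplus power in the decay assumption $S_{M+1}$ (rather than $S_M$) absorbs both the $|z|^M$ growth and the one extra power needed to integrate; everything else is routine. Combining all the factors produces the claim.
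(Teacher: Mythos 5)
Your overall strategy (reduce by dilation to $j\leq\ell$, expand $\varphi_j(x-\cdot)$ in a Taylor polynomial of degree $M-1$, use the vanishing moments of $\psi_\ell$ to pass to the integral remainder, and then combine the $S_{M+1}$ decay bounds) is the standard route to this almost-orthogonality estimate; note that the paper itself does not prove the lemma but simply cites [YY1, Lemma 2.2]. Your reduction, the moment-cancellation step, the derivative bounds, and the bookkeeping that produces the prefactor $2^{jn}2^{-(\ell-j)M}$ are all correct, as is the estimate on the near region $\{|z|\leq 2^{\ell-1}|x|\}$.

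However, the far-region estimate has a genuine gap. On $\{|z|>2^{\ell-1}|x|\}$ you discard the $\varphi$-factor (you must, since the supremum over $t\in[0,1]$ of $(1+2^j|x-t2^{-\ell}z|)^{-(n+M+1)}$ can equal $1$ there, e.g.\ at the $t$ with $t2^{-\ell}z=x$) and you are left with $\int_{|z|>2^{\ell-1}|x|}|z|^M(1+|z|)^{-(n+M+1)}\,dz$. The integrand decays only like $|z|^{-(n+1)}$, so this integral is comparable to $(1+2^{\ell}|x|)^{-1}$, not to $(1+2^j|x|)^{-(n+M)}$: taking $\ell=j$ and $|x|\to\infty$ gives $|x|^{-1}\not\lesssim|x|^{-(n+M)}$. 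In the language of your last paragraph, after reserving $M$ powers of $(1+|z|)^{-1}$ to cancel $|z|^M$ and $n+\varepsilon$ powers for integrability, the $S_{M+1}$ decay leaves only \emph{one} power to convert into decay in $x$, whereas $n+M$ powers are required; the deficit $n+M-1$ cannot be absorbed. The standard fix is to abandon the integral-form remainder on the far region $\{|y|>\frac12(2^{-j}+|x|)\}$ and instead bound the remainder by $|\varphi_j(x-y)|$ plus the absolute value of the Taylor polynomial, estimating the two resulting integrals separately: the term $\int|\varphi_j(x-y)||\psi_\ell(y)|\,dy$ gains the full $2^{-(\ell-j)M}(1+2^j|x|)^{-(n+M)}$ from the constraint $2^{\ell}|y|\gtrsim 2^{\ell-j}(1+2^j|x|)$ applied to $n+M$ of the $n+M+1$ powers of $(1+2^{\ell}|y|)^{-1}$, while the polynomial term gains it from the decay of $\partial^{\gamma}\varphi_j$ \emph{at the point} $x$ together with the tail estimate $\int_{|y|>R}|y|^{|\gamma|}|\psi_\ell(y)|\,dy\lesssim 2^{-\ell|\gamma|}(2^{\ell}R)^{-(M+1-|\gamma|)}$ for $|\gamma|\leq M-1$. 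With that replacement the proof closes.
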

\begin{proof}[Proof of Lemma \ref{defined}]
Let $\varphi$ and $\psi$ be a pair of Schwartz
functions satisfying both (T2) of Definition \ref{FBW} and
\eqref{cf}. Let $\varphi^\star$
and $\psi^\star$ be another
pair of Schwartz functions
satisfying both (T2) of Definition \ref{FBW} and \eqref{cf}.
By this, Lemma \ref{fz}, and $\phi\in\cs_\infty(\rn)$,
we find that
\begin{equation}\label{zsp}
\widetilde{\phi}=
\sum_{t\in\zz}
\varphi_t^\star
\ast\psi_t^\star
\ast\widetilde{\phi}
\quad \text{in}\quad \cs_\infty(\rn).
\end{equation}
Since $\varphi$ and $\varphi^\star$ satisfy (T2)
of Definition \ref{FBW},
it follows that
there exists an
$L\in\mathbb{N}$ such that,
for any $|j-t|> L$,
\begin{equation}\label{0}
\varphi_j\ast\varphi^\star_{t}=0.
\end{equation}
Let $\alpha,p,q,W$, and $m$ be the same as in
this lemma. Let $\vec{f}\in\dot{F}^{\alpha,q}_{p}(W)$.
To prove $$\sum_{j\in\zz}\vec{f}\ast\varphi_j\ast\psi_j
\ast\widetilde{\phi}\ast K(\mathbf{0})$$ converges,
where $K$ is the distribution whose Fourier transform is $m$,
we first show that $$\sum_{j\in\zz}\sum_{t\in\zz}\vec{f}\ast\varphi_j
\ast\psi_j\ast\varphi_t^\star\ast\psi_t^\star\ast\widetilde{\phi}\ast
 K(\mathbf{0})$$
converges. To this end,
by both \eqref{cons} of Lemma \ref{inftyp} and Definition \ref{FBW},
we find that, for any $N\in\zz_+$ satisfying Lemma \ref{inftyp} and for any $\vec{f}\in\dot{F}^{\alpha,q}_{p}(W)$ and $j\in\zz_+$,
\begin{align}\label{yd}
\sup_{x\in\rn}\frac{|\varphi_j\ast\vec{f}(x)|}{(1+2^j|x|)^N}&\lesssim
2^{2jn/p}\lf\{\int_{\rn}\lf|W^{1/p}(x)\lf(\varphi_j\ast\vec{f}\r)(x)
\r|^p\,dx\r\}^{1/p}\\
&\lesssim 2^{j(n/p-\alpha)}\lf\|\vec f\r\|_{\dot{F}^{\alpha,q}_{p}(W)}\noz.
\end{align}
From this, $\varphi^\star_j\ast\psi_j^{\star}=(\varphi^\star\ast\psi^{\star})_j
\in\cs_{\infty}(\rn)$, Lemma \ref{varphigj}, and the estimate that
$$1+2^j|z|\leq\left(1+2^j|y|\right)\left(1+2^j|z-y|\right)$$
for any $j\in\zz$ and $y,z\in\rn$, and
Lemma \ref{kgj} with $\ell>N+n/2$, we infer that
\begin{align*}
&\sum_{j=0}^{\infty}\lf|\vec{f}\ast\varphi_j\ast\psi_j\ast\varphi_j
^\star\ast\psi_j^\star\ast\widetilde{\phi}\ast K(\mathbf{0})\r|\\
	&\quad\leq\sum_{j=0}^{\infty}\int_\rn\lf|\vec{f}\ast\varphi_j(-z)
\r|\lf|\lf(\varphi^\star\ast\psi^\star\r)_j\ast\widetilde{\phi}
\ast\psi_j\ast K(z)\r|\,dz\\
	&\quad\lesssim\sum_{j=0}^\infty2^{j(2n/p-\alpha)}
\lf\|\vec f\r\|_{\dot{F}^{\alpha,q}_{p}(W)}
\int_{\rn}\int_\rn\lf(1+2^j|z|\r)^N\\
&\qquad\times\lf|\lf(\varphi^{\star}\ast\psi^{\star}\r)_j\ast\widetilde{\phi}(z-y)
\r|\lf|\psi_j\ast K(z)\r|\,dy\,dz\noz\\
	&\quad\lesssim\sum_{j=0}^\infty2^{j(2n/p-\alpha-M)}\lf\|\vec f
\r\|_{\dot{F}^{\alpha,q}_{p}(W)}\int_\rn\int_\rn\frac{(1+2^j|z|)^N}
{(1+|z-y|)^{n+M}}\lf|\psi_j\ast K(y)\r|\,dy\,dz\noz\\
	&\quad\lesssim\sum_{j=0}^\infty2^{j(N+2n/p-\alpha-M)}
\lf\|\vec f\r\|_{\dot{F}^{\alpha,q}_{p}(W)}\int_\rn\int_\rn\frac{(1+2^j|y|)^N}
{(1+|z-y|)^{n+M-N}}\lf|\psi_j\ast K(y)\r|\,dy\,dz\noz\\
	&\quad\sim\sum_{j=0}^\infty2^{j(N+2n/p-\alpha-M)}
\lf\|\vec f\r\|_{\dot{F}^{\alpha,q}_{p}(W)}\int_\rn
\lf(1+2^j|y|\r)^{N}\lf|\psi_j\ast K(y)\r|\,dy\noz\\
	&\quad\lesssim\sum_{j=0}^\infty2^{j(N+2n/p-\alpha-M-s)}
\lf\|\vec f\r\|_{\dot{F}^{\alpha,q}_{p}(W)}
	\sim\lf\|\vec f\r\|_{\dot{F}^{\alpha,q}_{p}(W)},\noz
	\end{align*}
where the implicit positive constants depend on
$\psi^{\star},\varphi^{\star}$, and $\phi$,
and where $M\in\mathbb{N}$ is chosen to be sufficiently
large such that $M>\max\{N, N+2n/p-s-\alpha\}$.
On the other hand, by \eqref{yd},  Lemma \ref{varphigj},
and the estimate that $1+2^j|z|\leq(1+2^j|y|)(1+2^j|z-y|)$
for any $j\in\zz$ and $y,z\in\rn$, and Lemma \ref{kgj},
we find that
\begin{align*}
	&\sum_{j=-\infty}^{-1}\lf|\vec{f}\ast\varphi_j\ast
\psi_j\ast\varphi_j^\star\ast\psi_j^\star\ast\widetilde
{\phi}\ast K(\mathbf{0})\r|\\
	&\quad\leq\sum_{j=-\infty}^{-1}\int_\rn\lf|\vec{f}
\ast\varphi_j(-z)\r|\lf|\lf(\varphi^\star\ast\psi^\star\r)_j
\ast\widetilde{\phi}\ast\psi_j\ast K(z)\r|\,dz\noz\\
	&\quad\lesssim\sum_{j=-\infty}^{-1}2^{j(-N-\alpha)}\lf\|
\vec f\r\|_{\dot{F}^{\alpha,q}_{p}(W)}\int_{\rn}\int_\rn
\lf(1+2^j|z|\r)^N
\noz\\
	&\quad\quad\times\lf|\lf(\varphi^{\star}\ast\psi^{\star}\r)_j
\ast\widetilde{\phi}(z-y)\r|\lf|\psi_j\ast K(z)\r|\,dy\,dz\noz\\
	&\quad\lesssim\sum_{j=-\infty}^{-1}2^{j(-N-\alpha+n+M)}
\lf\|\vec f\r\|_{\dot{F}^{\alpha,q}_{p}(W)}\int_\rn\int_\rn
\frac{(1+2^j|z|)^N}{(1+2^j|z-y|)^{n+M}}
\noz\\
	&\quad\quad\times\lf|\psi_j\ast K(y)\r|\,dy\,dz\noz\\
	&\quad\lesssim\sum_{j=-\infty}^{-1}2^{j(-N-\alpha+n+M)}
\lf\|\vec f\r\|_{\dot{F}^{\alpha,q}_{p}(W)}\int_\rn\int_\rn
\frac{(1+2^j|y|)^N}{(1+2^j|z-y|)^{n+M-N}}\noz\\
	&\quad\quad\times\lf|\psi_j\ast K(y)\r|\,dy\,dz\noz\\
	&\quad\sim\sum_{j=-\infty}^{-1}2^{j(-N-\alpha+M)}
\lf\|\vec f\r\|_{\dot{F}^{\alpha,q}_{p}(W)}\int_\rn
\lf(1+2^j|y|\r)^{N}\lf|\psi_j\ast K(y)\r|\,dy\noz\\
	&\quad\lesssim\sum_{j=-\infty}^{-1}2^{j(-N-\alpha+M-s)}
\lf\|\vec f\r\|_{\dot{F}^{\alpha,q}_{p}(W)}
	\sim\lf\|\vec f\r\|_{\dot{F}^{\alpha,q}_{p}(W)},\noz
	\end{align*}
	where the implicit positive constants depend on
$\psi^{\star},\varphi^{\star}$, and $\phi$,
and where $M\in\mathbb{N}$ is chosen to be sufficiently
large such that $M>\max\{N, s+\alpha+N\}$.
From an argument similar to that used in
the above two estimations, we deduce that
\begin{equation}\label{sphi}
\sum_{j\in\zz}\sum_{t=j-L}^{j+L}\lf|\vec{f}\ast\varphi_j
\ast\psi_j\ast\varphi_t^\star\ast\psi_t^\star\ast
\widetilde{\phi}\ast K(\mathbf{0})\r|<\infty.
\end{equation}
By \eqref{sphi} and \eqref{0} with $|j-t|>L$, we conclude that
\begin{align*}
&\sum_{j\in\zz}\sum_{t\in\zz} \lf|\vec{f}\ast\varphi_j
\ast\psi_j\ast\varphi_t^\star\ast\psi_t^\star\ast\widetilde{\phi}
\ast K(\mathbf{0})\r|\\
&\quad=\sum_{j\in\zz}\sum_{t=j-L}^{j+L}\lf|\vec{f}
\ast\varphi_j\ast\psi_j\ast\varphi_t^\star\ast\psi_t^\star
\ast\widetilde{\phi}\ast K(\mathbf{0})\r|<\infty.
\end{align*}
Using \eqref{ip} of Lemma \ref{inftyp}, Definition \ref{FBW},
Lemma \ref{varphigj}, the estimate that
$$1+2^{t}|x|\leq\left(1+2^{t}|y|\right)\left(1+2^{t}|x-y|\right)$$
for any $x,y\in\rn$ and $t\in\zz$, and Lemma \ref{kgj},
we find that, for any fixed $j\in\zz$,
\begin{align*}
&\sum_{t\in\zz}\int_{\rn}\lf|\vec{f}\ast\varphi_j\ast\psi_j(-x)
\lf(\varphi_t^{\star}\ast\psi^{\star}_{t}\ast\widetilde{\phi}
\ast K\r)(x)\r|\,dx\\
&\quad\lesssim\lf\|\vec f\r\|_{\dot{F}^{\alpha,q}_{p}(W)}
\sum_{t\in\zz}\int_{\rn}(1+|x|)^{N}\int_{\rn}\lf|\psi^{\star}_{t}
\ast\widetilde{\phi}(x-y)\r|\lf|\varphi_{t}^{\star}\ast K(y)\r|
\,dy\,dx\noz\\
&\quad\lesssim\lf\|\vec f\r\|_{\dot{F}^{\alpha,q}_{p}(W)}
\lf[\sum_{t=0}^{\infty}\int_{\rn}\int_{\rn}\frac{2^{-t R}
(1+|x|)^{N}}{(1+|x-y|)^{n+R}}\lf|\varphi_{t}^{\star}\ast K(y)\r|
\,dy\,dx\r.\noz\\
&\qquad\lf.+\sum_{t=-\infty}^{-1}\int_{\rn}\int_{\rn}
\frac{2^{t(n+R)}(1+|x|)^N}{(1+2^{t}|x-y|)^{n+R}}\lf|
\varphi_{t}^{\star}\ast K(y)\r|\,dy\,dx\r]\noz\\
&\quad\lesssim\lf\|\vec f\r\|_{\dot{F}^{\alpha,q}_{p}(W)}
\lf[\sum_{t=0}^{\infty}\int_{\rn}\int_{\rn}\frac{2^{-t R}
(1+|x|)^{N}}{(1+|x-y|)^{n+R}}\lf|\varphi_{t}^{\star}\ast
K(y)\r|\,dy\,dx\noz\r.\\
&\qquad\lf.+\sum_{t=-\infty}^{-1}\int_{\rn}\int_{\rn}
\frac{2^{t(n+R-N)}(1+2^{t}|x|)^{N}}{(1+2^{t}|x-y|)^{n+R}}
\lf|\varphi_{t}^{\star}\ast K(y)\r|\,dy\,dx\r]\noz\\
&\quad\lesssim\lf\|\vec f\r\|_{\dot{F}^{\alpha,q}_{p}(W)}
\lf[\sum_{t=0}^{\infty}\int_{\rn}\int_{\rn}\frac{2^{-t R}
(1+|y|)^N}{(1+|x-y|)^{n+R-N}}\lf|\varphi_{t}^{\star}\ast K(y)
\r|\,dy\,dx\noz\r.\\
&\lf.\qquad+\sum_{t=-\infty}^{-1}2^{t(n+R-N)}\int_{\rn}\int_{\rn}
\frac{(1+2^{t}|y|)^N}{(1+2^{t}|x-y|)^{n+R-N}}\lf|\varphi_{t}^{\star}
\ast K(y)\r|\,dy\,dx\r]\noz\\
&\quad\lesssim\lf\|\vec f\r\|_{\dot{F}^{\alpha,q}_{p}(W)}
\lf[\sum_{t=0}^{\infty}\int_{\rn}\int_{\rn}\frac{2^{-t R}(1+2^{t}|y|)^N}
{(1+|x-y|)^{n+R-N}}\lf|\varphi_{t}^{\star}\ast K(y)\r|\,dy\,dx\r.\noz\\
&\qquad+\lf.\sum_{t=-\infty}^{-1}2^{t(n+R-N)}\int_{\rn}\int_{\rn}
\frac{(1+2^{\ell}|y|)^N}{(1+2^{t}|x-y|)^{n+R-N}}\lf|\varphi_{t}^{\star}
\ast K(y)\r|\,dy\,dx\r]\noz\\
&\quad\sim\lf\|\vec f\r\|_{\dot{F}^{\alpha,q}_{p}(W)}\lf[\sum_{t=0}^{
\infty}2^{-t R}\int_{\rn}\lf(1+2^{t}|y|\r)^N\lf|\varphi_{t}^{\star}\ast
K(y)\r|\,dy\r.\noz\\
&\qquad\lf.+\sum_{t=-\infty}^{-1}2^{t(R-N)}\int_{\rn}\lf(1+2^{t}|y|\r)^N
\lf|\varphi_{t}^{\star}\ast K(y)\r|\,dy\r]\noz\\
&\quad\sim\lf\|\vec f\r\|_{\dot{F}^{\alpha,q}_{p}(W)}\lf[\sum_{t=0}^{
\infty}2^{-t R}\int_{\rn}\lf(1+2^{t}|y|\r)^N\lf|\varphi_{t}^{\star}\ast
K(y)\r|\,dy\noz\r.\\
&\qquad+\lf.\sum_{t=-\infty}^{-1}2^{t(R-N)}\int_{\rn}\lf(1+2^{t}|y|\r)^N\lf|
\varphi_{t}^{\star}\ast K(y)\r|\,dy\r]\noz\\
&\quad\lesssim\lf\|\vec f\r\|_{\dot{F}^{\alpha,q}_{p}(W)}\lf[\sum_{t=0}^{
\infty}2^{-t (R+s)}+\sum_{t=-\infty}^{-1}2^{t(R-N-s)}\r]
\sim\lf\|\vec f\r\|_{\dot{F}^{\alpha,q}_{p}(W)},\noz
\end{align*}
where the implicit positive constants depend on $j,\alpha,p$,
and $n$, and where $R\in\mathbb{N}$ is chosen to be sufficiently large such
that $R>\max\{N,\,-s,\,N+s\}$ with $N$ the same as in  \eqref{yd}.
From this and \eqref{zsp},  we deduce that
\begin{align*}
&\vec{f}\ast\varphi_j\ast\psi_j\ast\widetilde{\phi}\ast K(\mathbf{0})\\
&\quad=\int_{\rn}\vec{f}\ast\varphi_j\ast\psi_j(-x)\widetilde{\phi}\ast
K(x)\,dx\\
&\quad=\int_{\rn}\vec{f}\ast\varphi_j\ast\psi_j(-x)\lf\langle K,
\widetilde{\phi}(x-\cdot)\r\rangle\,dx\\
&\quad=\int_{\rn}\vec{f}\ast\varphi_j\ast\psi_j(-x)\sum_{t\in\zz}\lf\langle
K, \varphi_{t}^{\star}\ast\psi_{t}^{\star}\ast\widetilde{\phi}(x-\cdot)
\r\rangle\,dx\\
&\quad=\sum_{t\in\zz}\int_{\rn}\vec{f}\ast\varphi_j\ast\psi_j(-x)\lf\langle
K, \varphi_{t}^{\star}\ast\psi_{t}^{\star}\ast\widetilde{\phi}(x-\cdot)
\r\rangle\,dx\\
&\quad=\sum_{t\in\zz}\vec{f}\ast\varphi_j\ast\psi_j\ast\varphi_{t}^{\star}
\ast\psi_{t}^{\star}\ast\widetilde{\phi}\ast K(\mathbf{0}).
\end{align*}
By this and \eqref{0},  we find that
\begin{align*}
\sum_{j\in\zz}\vec{f}\ast\varphi_j\ast\psi_j\ast\widetilde{\phi}\ast
K(\mathbf{0})&=\sum_{j\in\zz}\sum_{t\in\zz}\vec{f}\ast\varphi_j\ast\psi_j
\ast\varphi_t^\star\ast\psi_t^\star\ast\widetilde{\phi}\ast K(\mathbf{0}),\noz
\end{align*}
which, together with \eqref{zsp},  $K\in\cs'(\rn)\subset\cs_{\infty}'(\rn)$,
and Lemma \ref{fz}, further implies that
\begin{align*}
	&\sum_{j\in\zz}\vec{f}\ast\varphi_j\ast\psi_j\ast\widetilde{\phi}\ast
K(\mathbf{0})\\
	&\quad=\sum_{t\in\zz}\sum_{j\in\zz}\vec{f}\ast\varphi_t^\star\ast
\psi_t^\star\ast\varphi_j\ast\psi_j\ast\widetilde{\phi}\ast K(\mathbf{0})\\
	&\quad=\sum_{t\in\zz}\sum_{j\in\zz}\int_{\rn}\vec{f}\ast\varphi_{t}^{\star}
\ast\psi_{t}^{\star}(-x)\lf\langle K, \varphi_j\ast\psi_j\ast\widetilde{\phi}
(\cdot-x)\r\rangle\,dx\\
	&\quad=\sum_{t\in\zz}\int_{\rn}\vec{f}\ast\varphi_{t}^{\star}\ast\psi_{t}^{
\star}(-x)\sum_{j\in\zz}\lf\langle K, \varphi_j\ast\psi_j\ast\widetilde{
\phi}(\cdot-x)\r\rangle\,dx\\
	&\quad=\sum_{t\in\zz}\int_{\rn}\vec{f}\ast\varphi_{t}^{\star}\ast\psi_{t}^{
\star}(-x)\lf\langle K,\widetilde{\phi}(\cdot-x)\r\rangle\,dx\\
	&\quad=\sum_{t\in\zz}\vec{f}\ast\varphi_t^\star\ast\psi_t^\star\ast
\widetilde{\phi}\ast K(\mathbf{0}).
\end{align*}
Thus, $T_{m}\vec{f}$ in \eqref{multiplier} is independent of the choice of the pair
$(\varphi,\psi)$ satisfying both (T2) of Definition \ref{FBW} and \eqref{cf}. Moreover,
the above argument also implies that $T_{m}\vec{f}\in[\cs'_\infty(\rn)]^m$ is well
defined, which completes the proof of Lemma \ref{defined}.
\end{proof}
\begin{lemma}\label{t*}
 Let $p\in(0,\infty)$ and $W\in A_p(\rn,\mathbb{C}^m)$. Assume that  $\varphi,
 \psi\in\cs(\rn)$ satisfy both (T1) and (T2) of Definition \ref{FBW} and \eqref{cf}.
 Let $\lambda\in(\beta/p+n/\min\{1,p\},\infty)$, where $\beta$ is the doubling exponent of $W$, and let $\ell\in(\lambda+n/2,\infty)$
and $m\in C^{\ell}(\rn\setminus\{
 \mathbf{0}\})$ be the same as in  \eqref{Hormander} with $s\in\rr$. Let $\phi:=
 \varphi\ast\psi$ and $T_m$ be the same as in  \eqref{multiplier}.
 Then there exists a positive constant $C$ such that, for any $\vec{f}\in
 \dot{F}^{\alpha,q}_{p}(W)$ and $x\in\rn$,
$$\lf|W^{1/p}(x)\lf(T_{m}\vec{f}\ast\phi_j\r)(x)\r|\leq C 2^{-js}\sup_{y\in\rn
}\frac{|W^{1/p}(x)(\varphi_j\ast\vec{f})(y)|}{(1+2^j|x-y|)^\lambda}.$$
\end{lemma}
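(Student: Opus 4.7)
The plan is to reduce the estimate to the pointwise identity
\[
T_{m}\vec{f}\ast\phi_j=\lf(\varphi_j\ast\vec{f}\r)\ast\lf(K\ast\psi_j\r)
\]
and then invoke Lemma~\ref{kgj}. Since $\phi=\varphi\ast\psi$, one has $\phi_j=\varphi_j\ast\psi_j$, and the identity heuristically corresponds to the Fourier-side factorization
\[
\lf(T_{m}\vec{f}\ast\phi_j\r)^{\wedge}=m\,\widehat{\vec{f}}\,\widehat{\varphi_j}\,\widehat{\psi_j}=\lf(\varphi_j\ast\vec{f}\r)^{\wedge}\cdot\lf(K\ast\psi_j\r)^{\wedge}.
\]
To extract this rigorously from the distributional definition \eqref{multiplier}, I would invoke Lemma~\ref{fz} to expand $\vec f=\sum_k\varphi_k\ast\psi_k\ast\vec f$ in $[\cs_{\infty}'(\rn)]^m$, and then observe that because both $\widehat{\phi_j}$ and $\widehat{\varphi_k\ast\psi_k}$ sit in dilated annuli, $\phi_j\ast\varphi_k\ast\psi_k=0$ unless $|j-k|\leq L$ for some fixed $L$ depending only on the supports of $\widehat{\varphi}$ and $\widehat{\psi}$. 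The sum in \eqref{multiplier} thus collapses to finitely many terms built from Schwartz functions, and the usual associativity of convolution then delivers the pointwise identity for every $x\in\rn$.

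Once the identity is in hand, the rest is mechanical. Fixing $x\in\rn$ and pulling the constant matrix $W^{1/p}(x)$ through the convolution, I estimate
\[
\lf|W^{1/p}(x)\lf(T_{m}\vec{f}\ast\phi_j\r)(x)\r|\leq\int_{\rn}\lf|W^{1/p}(x)\lf(\varphi_j\ast\vec{f}\r)(y)\r|\,\lf|\lf(K\ast\psi_j\r)(x-y)\r|\,dy.
\]
Inserting the trivial factor $(1+2^j|x-y|)^{\lambda}(1+2^j|x-y|)^{-\lambda}$ and passing to the supremum in the numerator gives
\[
\lf|W^{1/p}(x)\lf(\varphi_j\ast\vec{f}\r)(y)\r|\leq\lf(1+2^j|x-y|\r)^{\lambda}\sup_{z\in\rn}\frac{|W^{1/p}(x)(\varphi_j\ast\vec{f})(z)|}{(1+2^j|x-z|)^{\lambda}}.
\]
Pulling this supremum out of the integral and changing variables via $w=x-y$, the remaining integral reduces to
\[
\int_{\rn}\lf(1+2^j|w|\r)^{\lambda}\lf|\lf(K\ast\psi_j\r)(w)\r|\,dw,
\]
which Lemma~\ref{kgj} bounds by $C\,2^{-js}$; note that its hypothesis $\ell>\lambda+n/2$ is exactly what is assumed here, and that $\widehat{\psi}$ is compactly supported away from the origin by (T2) of Definition~\ref{FBW}. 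Combining the three displays yields the claimed pointwise inequality, with a constant independent of $\vec{f}$, $j\in\zz$, and $x\in\rn$.

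The main obstacle is the first step. Since $T_{m}\vec{f}$ is defined only as an element of $[\cs_{\infty}'(\rn)]^m$ through the sum \eqref{multiplier}, and since $K$ lives only in $\cs'(\rn)$, neither ``$T_{m}\vec{f}\ast\phi_j$'' nor ``$\vec{f}\ast K\ast\phi_j$'' has an a priori meaning as an iterated convolution of distributions; the identity must therefore be coaxed out of \eqref{multiplier} by the spectral-support reduction sketched above. After that reduction, all convolutions in play act on Schwartz functions (or on $K$ tested against Schwartz functions), so they can be freely rearranged, and the remainder of the proof is a one-line application of the Peetre-type sup together with Lemma~\ref{kgj}.
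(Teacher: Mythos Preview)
Your proposal is correct and follows the same essential strategy as the paper: reduce to the pointwise identity $T_m\vec{f}\ast\phi_j=(\varphi_j\ast\vec{f})\ast(K\ast\psi_j)$ and then apply Lemma~\ref{kgj}. The paper's execution of the identity differs only in technical detail: instead of collapsing the sum in \eqref{multiplier} directly via spectral support, it first checks that $\vec{f}\ast\varphi_j\ast\psi_j\ast K\in[\cs_\infty'(\rn)]^m$, uses Lemma~\ref{fz} to re-sum inside the pairing $\langle T_m\vec{f}\ast\phi_j,\eta\rangle$, and then introduces an auxiliary $\gamma\in\cs(\rn)$ with $\widehat{\gamma}=1$ on $\supp\widehat{\phi}$ and support in an annulus to upgrade the distributional identity to a pointwise one.
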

\begin{proof}
Let all the symbols be the same as in  the present lemma.
Let $K$ be the distribution whose Fourier transform is $m$, $\phi=\varphi\ast\psi
$, and $\vec{f}\in\dot{F}^{\alpha,q}_{p}(W)$.
We first show that, for any $j\in\zz$, $$\vec{f}\ast\varphi_j\ast\psi_j\ast
K\in[\cs_{\infty}'(\rn)]^m.$$
Indeed, by \eqref{yd},  the estimate that $1+2^j|x-y|\leq(1+2^j|x|)(1+2^j|y|)$ for
any $x,y\in\rn$, and  Lemma \ref{kgj}, we conclude that, for any $
\gamma\in\cs_{\infty}(\rn)$,
\begin{align*}
&\lf|\lf\langle \vec{f}\ast\varphi_j\ast\psi_j\ast K,\gamma\r\rangle\r|\\
&\quad
\leq\int_{\rn}\int_{\rn}\lf|\vec{f}\ast\varphi_j(x-y)\r|\lf|\psi_j
\ast K(y)\r|\lf|
\gamma(x)\r|\,dy\,dx\\
&\quad\lesssim\lf\|\vec{f}\r\|_{\dot{F}^{\alpha,q}_{p}(W)}\int_{\rn}\int_{\rn}
\lf(1+2^j|x-y|\r)^N\lf|\psi_j\ast K(y)\r|\lf|\gamma(x)\r|\,dy\,dx\\
&\quad\lesssim\lf\|\vec{f}\r\|_{\dot{F}^{\alpha,q}_{p}(W)}\int_{\rn}\int_{\rn}
\lf(1+2^j|x|\r)^N\lf(1+2^j|y|\r)^N\lf|\psi_j\ast K(y)\r|\lf|\gamma(x)\r|
\,dy\,dx\\
&\quad\lesssim\lf\|\vec{f}\r\|_{\dot{F}^{\alpha,q}_{p}(W)}\int_{\rn}\lf(1+2^j|x|\r)^N
\lf|\gamma(x)\r|\,dx\\
&\quad\lesssim\lf\|\vec{f}\r\|_{\dot{F}^{\alpha,q}_{p}(W)}\rho_{N+2n,0}(\gamma),
\end{align*}
where the implicit positive constants depend on $\alpha\in\rr$, $j\in\zz$,
and $s\in\rr$, and where $N$ is the same as in  \eqref{yd} and $\rho_{N+2n,0}$ is
the same as in  \eqref{rhocs},  which implies that the above claim holds true. Using
this claim and Lemma \ref{fz}, we find that
$$\sum_{\ell\in\zz}\vec{f}\ast\varphi_j\ast\psi_j\ast K\ast\varphi_{\ell}
\ast\psi_\ell$$
converges in $[\cs_{\infty}'(\rn)]^m$.
From \eqref{multiplier} and Lemmas \ref{defined} and \ref{fz},
we deduce that,
for any
$\vec{f}\in\dot{F}^{\alpha,q}_{p}(W)$,
$T_{m}\vec{f}\in[\cs'_\infty(\rn)]^m$ and, for any
$\eta\in\cs_\infty(\rn)$,
\begin{align}\label{56x}
\langle T_m\vec{f}\ast\phi_j,\eta\rangle&=\langle T_m\vec{f},
\eta\ast\widetilde{\phi_j}\rangle\\
&=\sum_{\ell\in\zz}\vec{f}\ast\varphi_\ell\ast\psi_{\ell}\ast
\widetilde{\eta}\ast\varphi_j\ast\psi_j\ast K(\mathbf{0})\noz\\
&=\sum_{\ell\in\zz}\lf\langle\vec{f}\ast\varphi_{\ell}\ast\psi_{\ell}
\ast\varphi_{j}\ast\psi_{j}\ast K,\eta\r\rangle\noz\\
&=\lf\langle\sum_{\ell\in\zz}\vec{f}\ast\varphi_{\ell}\ast\psi_{\ell}
\ast\varphi_{j}\ast\psi_{j}\ast K,\eta\r\rangle\noz\\
&=\lf\langle\vec{f}\ast\varphi_j\ast\psi_j\ast K,\eta\r\rangle.\noz
\end{align}
Let $\gamma\in\cs(\rn)$ satisfy both $\widehat{\gamma}=1$ on $\supp{
\widehat{\phi}}$ and $\supp{\gamma}\subseteq\{x\in\rn:0<|x|<\pi\}$.
For any $j\in\zz$ and $x\in\rn$, let $\gamma_j(x):=2^{jn}\gamma(2^jx)$.
Then, by \eqref{56x} with $\gamma\in\cs_{\infty}(\rn)$,
\cite[Theorem 2.3.21]{G1},
and  $\phi=\varphi\ast\psi$ with $\supp\widehat{\phi}\subset
\{x\in\rn:|x|<\pi\}$,
 we find that
\begin{align*}
T_m\vec{f}\ast\phi_j(x)&=T_m\vec{f}\ast\phi_j\ast
 \gamma_j(x)
= \vec{f}\ast\varphi_j\ast\psi_j\ast K\ast\gamma_j(x)\\ &=\vec{f}\ast
\varphi_j\ast\psi_j\ast K(x)
\end{align*}
for any $x\in\rn.$
From this and Lemma \ref{kgj},
we infer that, for any $x\in\rn$,
\begin{align*}
&\lf|W^{1/p}(x)
\lf(T_{m}\vec{f}\ast\phi_j\r)(x)\r|\\
&\quad\leq\int_\rn
\frac{|W^{1/p}(x)(\varphi_j\ast\vec{f})(x-y)|}
{(1+2^j|y|)^\lambda}\lf(1+2^j|y|\r)^\lambda
\lf|\lf(K\ast\psi_j\r)(y)\r|\,dy\\
&\quad\lesssim 2^{-js}\sup_{y\in\rn}
\frac{|W^{1/p}(x)(\varphi_j\ast\vec{f})(y)|}
{(1+2^j|x-y|)^\lambda}.
\end{align*}
This finishes the proof of Lemma \ref{t*}.
\end{proof}

\begin{theorem}\label{fourier multiplier}
Let $\alpha\in\rr$,
$p\in(0,\infty)$,
$q\in(0,\infty]$, and
$W\in A_p(\rn,\mathbb{C}^m)$.
Let $
\ell\in(\frac{n}{\min\{1,p,q\}}
+\frac{\beta}{p}+\frac{n}{2},\infty)
$ and $m\in C^{\ell}(\rn\setminus\{\mathbf{0}\})$ be the same as in
\eqref{Hormander}
with $s\in\rr$, where $\beta$ is the doubling exponent of $W$.
Let $T_m$ be the same as in
\eqref{multiplier}.
Then there exists a
positive constant $C$ such that, for any $\vec{f}\in\dot{F}^{\alpha,q}_{p}(W)$,
$$
\lf\|T_{m}\vec{f}\r\|
_{\dot{F}^{\alpha+s,q}_{p}(W)}
\leq C\lf\|\vec f\r\|
_{\dot{F}^{\alpha,q}_{p}(W)}.
$$
\end{theorem}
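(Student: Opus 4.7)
The plan is to reduce the theorem to the Peetre maximal function characterization of $\dot{F}^{\alpha,q}_{p}(W)$ in Theorem \ref{pmf}, using the pointwise estimate supplied by Lemma \ref{t*} as the key bridge. This pointwise estimate already encodes the smoothness gain $2^{-js}$ coming from the generalized H\"ormander condition \eqref{Hormander}, so the shift $\alpha\mapsto\alpha+s$ on the target side will be absorbed for free.

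First I would select a pair $(\varphi,\psi)$ of Schwartz functions satisfying (T1), (T2), and the reproducing identity \eqref{cf} (such a pair exists by Lemma \ref{CE}), and set $\phi:=\varphi\ast\psi$. The function $\phi$ then satisfies (T2), since $\widehat\phi=\widehat\varphi\,\widehat\psi$ inherits both the support in $\{|\xi|<\pi\}$ and the separation from the origin, and it satisfies (T1) because \eqref{cf} forces $\widehat\varphi(2^{-j}x)\widehat\psi(2^{-j}x)\neq 0$ for some $j\in\zz$ at every $x\neq\mathbf{0}$. Hence Lemma \ref{independent} allows me to use $\phi$ to compute the $\dot{F}^{\alpha+s,q}_p(W)$-norm. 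Since $\ell>n/\min\{1,p,q\}+\beta/p+n/2$ majorizes $\beta/p+n/\min\{1,p\}+n/2$, Lemma \ref{defined} guarantees that $T_m\vec{f}\in[\cs'_\infty(\rn)]^m$ is well defined for every $\vec{f}\in\dot{F}^{\alpha,q}_p(W)$.

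Next I would choose $\lambda$ with
\begin{equation*}
\frac{n}{\min\{1,p,q\}}+\frac{\beta}{p}<\lambda<\ell-\frac{n}{2},
\end{equation*}
which is a nonempty range by the hypothesis on $\ell$. Applying Lemma \ref{t*} with this $\lambda$ gives, for every $j\in\zz$ and $x\in\rn$,
\begin{equation*}
2^{j(\alpha+s)}\lf|W^{1/p}(x)\lf(T_m\vec{f}\ast\phi_j\r)(x)\r|
\lesssim 2^{j\alpha}\lf(\varphi_j^{*}\vec{f}\r)_{\lambda}^{(W,p)}(x),
\end{equation*}
where $(\varphi_j^{*}\vec{f})_{\lambda}^{(W,p)}$ is the Peetre-type maximal function in \eqref{pjd}. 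Taking the $\ell^q$-norm in $j$ and then the $L^p(\rn)$-norm in $x$, this pointwise inequality upgrades to
\begin{equation*}
\lf\|T_m\vec{f}\r\|_{\dot{F}^{\alpha+s,q}_{p,\phi}(W)}
\lesssim \lf\|\vec{f}\r\|^{\star}_{\dot{F}^{\alpha,q}_p(W)},
\end{equation*}
with the starred quasi-norm \eqref{pjida} taken with the parameter $a=\lambda$. Theorem \ref{pmf} (applicable since $\lambda>n/\min\{1,p,q\}+\beta/p$) converts the right-hand side to $\|\vec{f}\|_{\dot{F}^{\alpha,q}_p(W)}$, and Lemma \ref{independent} identifies the left-hand side with $\|T_m\vec{f}\|_{\dot{F}^{\alpha+s,q}_p(W)}$; concatenating yields the desired bound.

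The substantive technical work has already been packaged into Lemma \ref{t*}, whose proof combines the identification $T_m\vec{f}\ast\phi_j=\vec{f}\ast\varphi_j\ast\psi_j\ast K$ (via Lemma \ref{fz} and the support of $\widehat\phi$) with the weighted $L^1$-type bound of Lemma \ref{kgj}, the latter being precisely where the H\"ormander condition \eqref{Hormander} produces the decay $2^{-js}$. Once that estimate is in hand, the only remaining task is the parameter bookkeeping above, and the hypothesis $\ell>n/\min\{1,p,q\}+\beta/p+n/2$ is designed to leave room for a $\lambda$ that simultaneously obeys the Peetre condition of Theorem \ref{pmf} and the restriction $\ell>\lambda+n/2$ needed by Lemma \ref{kgj}; this compatibility check is the only delicate point of the present reduction.
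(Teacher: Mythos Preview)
Your proof is correct and follows essentially the same route as the paper: apply Lemma \ref{t*} to obtain the pointwise Peetre-type bound, then invoke Theorem \ref{pmf} and the $\varphi$-independence of the quasi-norm. Your version is in fact more carefully written, making explicit the choice of $\lambda$ in the interval $(n/\min\{1,p,q\}+\beta/p,\ \ell-n/2)$ and verifying that this interval is nonempty, whereas the paper leaves this parameter check implicit.
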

\begin{proof}
Let $\phi$, $\varphi$, and $\psi$ be the same as in the present lemma.
Since $\phi=\varphi\ast\psi$, it follows that $\phi$ satisfies both (T1)
and (T2)
of Definition \ref{FBW}. Using this, Definition \ref{FBW}, Lemmas \ref{eq}
and \ref{t*}, and Theorem \ref{pmf}, we find that
	\begin{align*}
	\lf\|T_{m}\vec{f}\r\|_{\dot{F}^{\alpha+s,q}_{p}(W)}
	&=\lf\|\lf[\sum_{j\in\zz}\lf|2^{j(\alpha+s)}W^{1/p}\lf(T_{m}\vec{f}
\ast\phi_j\r)\r|^q\r]^{1/q}\r\|_{L^p(\rn)}\\
	&\lesssim\lf\|\lf[\sum_{j\in\zz}\lf|2^{j\alpha }\sup_{y\in\rn}
\frac{|W^{1/p}(\cdot)(\varphi_j\ast\vec{f})(y)|}{(1+2^j|\cdot-y|)^\lambda}
\r|^q\r]^{1/q}\r\|_{L^p(\rn)}\\
	&\sim\lf\|\vec f\r\|_{\dot{F}^{\alpha,q}_{p}(W)},
	\end{align*}
	which completes the proof of Theorem \ref{fourier multiplier}.
\end{proof}
\begin{remark}\label{tr}
Theorem \ref{fourier multiplier} when $m=1$
and $W=1$ is a part of \cite[Theorem 1.5(i)]{YYZ}.
\end{remark}

\smallskip

\noindent\textbf{Acknowledgements}\quad The authors would like to thank
Fan Bu and Professor Wen Yuan for some helpful
discussions on the subject of this article.



\bigskip

\noindent Qi Wang, Dachun Yang (Corresponding author), and Yangyang Zhang

\smallskip

\noindent  Laboratory of Mathematics and Complex Systems
(Ministry of Education of China),
School of Mathematical Sciences, Beijing Normal University,
Beijing 100875, The People's Republic of China

\smallskip

\noindent{{\it E-mails}:}
\texttt{wqmath@mail.bnu.edu.cn} (Q. Wang)

\noindent\phantom{{\it E-mails}:}
\texttt{dcyang@bnu.edu.cn} (D. Yang)

\noindent\phantom{{\it E-mails}:}
\texttt{yangyzhang@mail.bnu.edu.cn} (Y. Zhang)

\end{document}